\documentclass[12pt]{amsart}
\usepackage{graphicx}
\usepackage{amsmath}
\usepackage{amsthm}
\usepackage{amsfonts}
\usepackage{amssymb}
\usepackage{color}
\usepackage{verbatim}
\usepackage{cite}
\usepackage{tikz}
\usepackage[pagewise]{lineno}

\theoremstyle{plain}
\newtheorem{thm}{Theorem}[section]
\newtheorem{lemma}[thm]{Lemma}
\newtheorem{cor}[thm]{Corollary}
\newtheorem{prop}[thm]{Proposition}

\newtheorem*{mainthm}{Theorem~\ref{Thm:EntropyConjugacy}}
\newtheorem*{realizationthm}{Theorem~\ref{Thm:Realization}}
\newtheorem*{thm*}{Theorem}

\theoremstyle{remark}
\newtheorem{rmk}[thm]{Remark}
\newtheorem{example}[thm]{Example}

\theoremstyle{definition}
\newtheorem{defn}[thm]{Definition}

\numberwithin{equation}{section}

\def\N{\mathbb{N}}
\def\Z{\mathbb{Z}}

\def\C{\mathcal{C}}

\DeclareMathOperator{\Pre}{Pre}

\DeclareMathOperator{\htop}{h_{top}}
\DeclareMathOperator{\hprob}{h_{prob}}

\newenvironment{PfofThmEntropyConjugacy}[1]
{\par\vskip2\parsep\noindent{\sc Proof of Theorem\ \ref{Thm:EntropyConj}. }}{{\hfill
$\Box$}
\par\vskip2\parsep}

\newenvironment{PfofPropPhiPreservesEntropy}[1]
{\par\vskip2\parsep\noindent{\sc Proof of Proposition\ \ref{Prop:PhiPreservesEntropy}. }}{{\hfill
$\Box$}
\par\vskip2\parsep}

\author[J. P. Kelly]{James P. Kelly}
\address[J. P. Kelly]{Department of Mathematics, Christopher Newport University, Newport News, VA 23606, USA}
\email{james.kelly@cnu.edu}
\title{Entropy Conjugacy for Markov Multi-maps of the Interval}
\author[K. McGoff]{Kevin McGoff}
\address[K. McGoff]{Department of Mathematics and Statistics, University of North Carolina at Charlotte, Charlotte, NC 28223, USA}
\email{kmcgoff1@uncc.edu}

\begin{document}

\begin{abstract}
We consider a class $\mathcal{F}$ of Markov multi-maps on the unit interval. Any multi-map gives rise to a space of trajectories, which is a closed, shift-invariant subset of $[0,1]^{\mathbb{Z}_+}$. For a multi-map in $\mathcal{F}$, we show that the space of trajectories is (Borel) entropy conjugate to an associated shift of finite type.  Additionally, we characterize the set of numbers that can be obtained as the topological entropy of a multi-map in $\mathcal{F}$.
\end{abstract}

\maketitle

\section{Introduction}

Multi-maps, also called set-valued maps, have been studied in the topological dynamics literature for some time, with such notable examples as \cite{Akin1993,McGehee1992,MillerAkin1999}. In the past decade multi-maps have been studied extensively, with a particular focus on the topological structure of the associated space of trajectories or a related inverse limit space; see \cite{Ingram2012}. This development has also led to a renewed interest in the dynamics of multi-maps \cite{KellyTennant2017,KennedyNall2018,ErcegKennedy2018,CordeiroPacifico2016}.
Additionally, multi-maps are the topological analogues of random maps of the interval, which have received substantial attention, e.g., \cite{buzzi1999exponential,froyland1999ulam,pelikan1984invariant,Quas}. 

In the study of single-valued maps of the interval, Markov maps \cite{bowen1979invariant} are particularly well-understood. These maps have a finite invariant set such that the map is strictly monotone on the intervals between elements of that set. This structure allows one to associate to each Markov interval map a corresponding shift of finite type that preserves many aspects of the dynamics. 

Recent work \cite{BanicCrepnjak2018,BanicLunder2016,CrepnjakLunder2016,AlvinKelly2018} has generalized the notion of Markov interval maps to the setting of multi-maps and established some of their basic properties.  
In particular, \cite{AlvinKelly2019} proves that under some conditions on the Markov multi-map, one may find upper and lower bounds for its entropy using associated shifts of finite type. 

Our main results substantially sharpen this previous work. Under mild conditions on the Markov multi-map, we associate to it a single shift of finite type, and then we establish a close connection (in the form of a Borel entropy conjugacy) between the dynamics of the multi-map and its associated shift of finite type. In particular, for a Markov multi-map in the class $\mathcal{F}$ considered here, the topological entropies of the Markov multi-map and of the associated shift of finite type must be equal. Furthermore, we demonstrate the richness of the class $\mathcal{F}$ by showing that any number that appears as the entropy of a shift of finite type also appears as the entropy of a Markov multi-map in $\mathcal{F}$.

\subsection{Statement of main results}

A multi-map of the unit interval is a function $F : [0,1] \to 2^{[0,1]}$, where $2^{[0,1]}$ is taken to be the set of closed subsets of $[0,1]$. Given such a multi-map, its trajectory space $X = X(F)$ is defined by
\[
X(F) = \Bigl\{ x = (x_n)_{n=0}^{\infty} \in [0,1]^{\mathbb{Z}_+} : \forall n \geq 0, x_{n+1} \in F(x_n) \Bigr\}.
\]
Further, let $\sigma_X : X \to X$ denote the left-shift map $(x_n)_{n=0}^{\infty} \mapsto (x_{n+1})_{n=0}^{\infty}$ on $X$. We seek to understand the multi-map $F$ by studying the dynamics of the system $(X,\sigma_X)$.  

In Section \ref{Sect:MarkovMultiMaps} we introduce a class of multi-maps that we call Markov multi-maps, and in Definition \ref{Defn:ProperParametrization} we state what it means for a Markov multi-map to be properly parametrized. 
In this work we focus on a specific class $\mathcal{F}$ of Markov multi-maps (see Definition \ref{Defn:ClassF}): properly parametrized Markov multi-maps with complete sets of coding and avoiding words and positive entropy. 
For any multi-map $F$ in this class, one may associate to $F$ a square matrix $M = M(F)$ with entries in $\{0,1\}$ (see Section \ref{Sect:Matrix}). The matrix $M$ encodes the combinatorial structure of $F$. Let $\Sigma_M$ be the shift of finite type defined by $M$, with left-shift map $\sigma_M$. The following theorem provides a precise correspondence between a ``large" subset of the trajectory space $X$ and a ``large" subset of the SFT $\Sigma_M$, where ``large" here refers to a notion of entropy. A precise definition of Borel entropy conjugacy, originally defined by Buzzi \cite{Buzzi1997} under the term ``entropy conjugacy," appears in Definition \ref{Defn:EntropyConj}.

\begin{thm} \label{Thm:EntropyConj}
Let $\mathcal{F}$ be the class of Markov multi-maps specified in Definition \ref{Defn:ClassF}.
Let $F$ be in $\mathcal{F}$ with trajectory space $X$ and associated SFT $\Sigma_M$. Then $(X,\sigma_X)$ is Borel entropy conjugate to $(\Sigma_M,\sigma_M)$. 
\end{thm}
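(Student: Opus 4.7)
The plan is to construct a natural symbolic coding $\pi : \Sigma_M \to X$, restrict to ``generic'' subsets on each side where $\pi$ becomes a Borel bijection intertwining the shifts, and then use the combinatorial conditions from Definition \ref{Defn:ClassF} to prove that the complementary ``exceptional'' sets carry strictly less topological entropy than the whole system. By Definition \ref{Defn:EntropyConj}, this is exactly what is required.

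First I would build $\pi$ branch by branch. Each symbol of the alphabet for $\Sigma_M$ corresponds to a monotone branch of the multi-map $F$, defined on a closed subinterval $I_a \subseteq [0,1]$; the proper parametrization guarantees that each branch is a strictly monotone continuous function onto its image. Given $a = (a_0, a_1, \ldots) \in \Sigma_M$, the set of points $x_0 \in I_{a_0}$ whose iterates follow the branches $a_0, a_1, \ldots, a_{n-1}$ is a nested decreasing sequence of closed subintervals. By the Markov condition and strict monotonicity these intervals shrink to a single point $x_0(a)$, and setting $\pi(a) = (x_0(\sigma_M^n a))_{n \ge 0}$ yields a trajectory in $X$ with $\pi \circ \sigma_M = \sigma_X \circ \pi$. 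Continuity and Borel measurability are then immediate from the nested-interval definition.

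Next I would identify the ``bad'' sets where $\pi$ can fail to be bijective. Call $a \in \Sigma_M$ exceptional if the trajectory $\pi(a)$ eventually visits a Markov endpoint (i.e., a point in the finite invariant set at which branches meet), and let $B_M$ denote this set; analogously let $B_X \subseteq X$ denote the trajectories passing through a Markov endpoint. Away from these sets each coordinate of the trajectory lies in the interior of a unique branch, so the symbolic code and the orbit determine each other uniquely, and $\pi$ restricts to a shift-equivariant Borel bijection between $\Sigma_M \setminus B_M$ and a full Borel subset of $X \setminus B_X$.

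The crux of the argument is then to show that $\htop(B_M) < \htop(\Sigma_M)$ and $\htop(B_X) < \htop(X)$. This is where the completeness hypotheses on the sets of coding and avoiding words enter: a symbolic tail of a trajectory landing at a Markov endpoint must be built entirely from ``avoiding'' words in the sense prescribed by Definition \ref{Defn:ClassF}, and dually the completeness of the coding words allows one to recover enough combinatorial information about the orbit from its symbolic representation that $B_X$ embeds into a system controlled by a proper sub-SFT. I would describe each exceptional set as a countable union of sub-SFTs defined by appropriate forbidden-word restrictions and then use a counting argument to obtain a strict inequality on exponential growth rates.

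Combining the shift-equivariant Borel bijection on the generic sets with the strict entropy gap on the exceptional sets produces the Borel entropy conjugacy. The principal obstacle I expect is precisely the quantitative step above: turning the abstract ``completeness'' conditions of Definition \ref{Defn:ClassF} into an honest exponential gap $\htop(B_M), \htop(B_X) < \htop(\Sigma_M) = \htop(X)$, rather than merely showing that the exceptional sets are small in some weaker topological sense such as meagerness or having empty interior.
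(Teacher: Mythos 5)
Your proposal has a fundamental gap at the very first step. You assert that for every $\mathbf a\in\Sigma_M$ the nested intervals determined by the branches $a_0,a_1,\ldots$ ``shrink to a single point $x_0(\mathbf a)$'' by ``the Markov condition and strict monotonicity,'' and therefore that there is a well-defined map $\pi\colon\Sigma_M\to X$. This is false: strict monotonicity of each branch gives no contraction whatsoever, and the nested intervals $I_{\mathbf a}$ can remain non-degenerate for every $\mathbf a$. The paper's final example (with $f_1(x)=x^2$, $f_2(x)=x^3$ on all of $[0,1]$) has $I_u=[0,1]$ for every word $u$, so there is no candidate map $\Sigma_M\to X$ at all. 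The entire role of the ``coding word'' hypothesis in Definition \ref{Defn:ClassF} is to guarantee that $I_{\mathbf a}$ is a singleton \emph{only} for those $\mathbf a$ that see a distinguished word infinitely often; your reading of the coding-word condition (as a way to ``recover combinatorial information about the orbit from its symbolic representation'' when bounding the entropy of the bad set) has the direction exactly backwards. Because the map $\pi$ you propose is not well-defined, you cannot ``restrict to generic subsets where $\pi$ becomes a Borel bijection'' --- there is nothing yet to restrict.

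The paper avoids this by never attempting a map $\Sigma_M\to X$. It instead builds a joint system $V\subseteq[0,1]^{\mathbb Z_+}\times\Sigma_M$ (Definition \ref{Defn:V}), together with two factor maps $\phi\colon V\to\Sigma_M$ and $\pi\colon V\to X$ that are both automatically well-defined as coordinate projections, and then deletes a small bad set $B\subseteq V$ so that both restrictions are injective; the entropy conjugacy is $\pi|_{V\setminus B}\circ\phi|_{V\setminus B}^{-1}$. The two hypotheses then play dual roles: coding words make $\phi$ injective off $B$ (one symbolic sequence $\mathbf a$ with $I_{\sigma^n\mathbf a}$ a singleton for all $n$ admits at most one compatible trajectory), while avoiding words make $\pi$ injective off $B$ (a trajectory that leaves $P$ infinitely often has a unique labeling by Lemma \ref{Lemma:McCartney}). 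Relatedly, your description of the exceptional set as ``trajectories eventually visiting a Markov endpoint'' is both too large in one direction and too small in another: a single visit to $P$ causes no ambiguity, whereas the bad set must also contain sequences in which the coding word appears only finitely often, a condition that has nothing to do with hitting Markov endpoints. Your instinct that the bad sets should be covered by proper sub-SFTs (forbid the coding and avoiding words) and that entropy minimality of irreducible SFTs gives the strict gap is correct and matches the paper, but that step cannot be reached without first repairing the non-existent map $\pi\colon\Sigma_M\to X$.
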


Since Borel entropy conjugacy is known to preserve topological entropy, we immediately obtain the following corollary.

\begin{cor}
Let $F$ be in $\mathcal{F}$ with trajectory space $X$ and associated SFT $\Sigma_M$. Then $\htop(X,\sigma_X) = \htop(\Sigma_M,\sigma_M)$. 
\end{cor}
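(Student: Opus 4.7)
The plan is to deduce the corollary directly from Theorem~\ref{Thm:EntropyConj} by invoking the fact that Borel entropy conjugacy preserves topological entropy. As the excerpt already indicates, this is a known consequence of the definition, going back to Buzzi~\cite{Buzzi1997}, so my proposed proof reduces to quoting this general fact and applying it to $(X,\sigma_X)$ and $(\Sigma_M,\sigma_M)$.

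The key ingredient is the variational principle, which identifies $\htop$ with $\sup_\mu h_\mu$, the supremum of measure-theoretic entropies over all shift-invariant Borel probability measures. By the definition of Borel entropy conjugacy (Definition~\ref{Defn:EntropyConj}), there exist shift-invariant Borel subsets $X' \subseteq X$ and $\Sigma'_M \subseteq \Sigma_M$ together with a Borel isomorphism $\phi : X' \to \Sigma'_M$ intertwining the two shifts, such that every invariant measure supported in $X \setminus X'$ (resp.\ $\Sigma_M \setminus \Sigma'_M$) has measure-theoretic entropy strictly less than $\htop(X,\sigma_X)$ (resp.\ $\htop(\Sigma_M,\sigma_M)$). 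Consequently the variational suprema on each side are realized or approximated only by measures concentrated on the ``good'' subsets, and pushing such measures forward under $\phi$ or $\phi^{-1}$ preserves $h_\mu$. The two suprema therefore coincide.

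I do not anticipate any genuine obstacle here: the only step worth recording explicitly is the standard fact that a Borel measure-theoretic isomorphism between two measure-preserving systems preserves measure-theoretic entropy, together with the observation that invariant measures on $X$ decompose over $X'$ and its complement. In the paper this corollary is stated as ``immediate,'' and the proof I envision is a one-line appeal to Theorem~\ref{Thm:EntropyConj} combined with the variational principle.
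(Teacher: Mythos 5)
Your proposal is correct and follows the same route as the paper: Theorem~\ref{Thm:EntropyConj} gives the Borel entropy conjugacy, and the paper has already recorded (in Section~\ref{Sect:Background}) that Borel entropy conjugacy preserves topological entropy as an easy consequence of the variational principle. Your expansion of that background fact --- ergodic measures of nearly maximal entropy must vanish on the bad sets, push forward isomorphically under $\psi$, and hence contribute equally to both variational suprema --- is exactly the standard argument the paper implicitly invokes.
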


In fact, since Borel entropy conjugacy provides a correspondence between all ergodic measures with large enough entropy, the following corollary is also immediate.

\begin{cor}
Let $F$ be in $\mathcal{F}$ with trajectory space $X$ and associated SFT $\Sigma_M$. Then $(X,\sigma_X)$ has the same number of measures of maximal entropy as $(\Sigma_M,\sigma_M)$. In particular, if $(\Sigma_M,\sigma_M)$ is irreducible, then $(X,\sigma_X)$ is intrinsically ergodic (i.e., has a unique measure of maximal entropy).
\end{cor}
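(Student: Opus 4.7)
The plan is to deduce the corollary directly from Theorem~\ref{Thm:EntropyConj} by unpacking the definition of Borel entropy conjugacy. First I would recall that, per Definition~\ref{Defn:EntropyConj}, the conjugacy provides shift-invariant Borel subsets $X_0 \subseteq X$ and $\Sigma_0 \subseteq \Sigma_M$, a Borel isomorphism $\phi : X_0 \to \Sigma_0$ intertwining $\sigma_X$ and $\sigma_M$, and a constant $h_\ast$ strictly less than the common topological entropy $h := \htop(X,\sigma_X) = \htop(\Sigma_M,\sigma_M)$, such that every shift-invariant Borel probability measure of entropy strictly greater than $h_\ast$ is supported on $X_0$ (respectively on $\Sigma_0$).

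Next I would observe that if $\mu$ is any measure of maximal entropy on $X$, then $h_\mu(\sigma_X) = h > h_\ast$, so $\mu(X_0) = 1$. The pushforward $\nu := \phi_\ast \mu$ is then a $\sigma_M$-invariant Borel probability measure supported on $\Sigma_0$ with $h_\nu(\sigma_M) = h_\mu(\sigma_X) = h$, because $\phi$, being a shift-commuting Borel isomorphism between full-measure invariant sets, is a measure-theoretic isomorphism and Kolmogorov--Sinai entropy is a measure-isomorphism invariant. Hence $\nu$ is a measure of maximal entropy on $\Sigma_M$. The symmetric argument applied to $\phi^{-1}$ supplies the inverse correspondence, so $\phi_\ast$ furnishes a bijection between the measures of maximal entropy of the two systems, yielding the first claim.

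For the ``in particular'' clause, I would invoke the classical theorem of Parry that every irreducible shift of finite type admits a unique measure of maximal entropy. Combined with the bijection just constructed, this immediately gives a unique measure of maximal entropy for $(X,\sigma_X)$, which is precisely intrinsic ergodicity.

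I do not anticipate any substantive obstacle: the argument is bookkeeping around the definition of Borel entropy conjugacy together with a standard appeal to Parry's theorem. The only point that requires care is the entropy-preservation step, and it is routine once one notes that any shift-invariant measure supported on $X_0$ and its $\phi$-pushforward induce isomorphic measure-preserving systems.
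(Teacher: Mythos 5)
Your proposal is correct and follows what is essentially the only natural route: unpack Definition~\ref{Defn:EntropyConj}, note that an ergodic measure of maximal entropy has entropy exceeding $\hprob(E_0)$ (resp.\ $\hprob(E_1)$) and hence assigns zero mass to the bad set, so pushforward along $\psi$ gives a bijection between ergodic MMEs, and then invoke Parry's theorem for the irreducible case. The paper treats the corollary as immediate and gives no written proof, so there is nothing to compare against beyond the fact that your reasoning is the standard one; the one point you might make explicit is that it suffices to biject the \emph{ergodic} MMEs (the extreme points of the simplex of MMEs), since a non-ergodic MME decomposes into ergodic components which are a.e.\ themselves MMEs and hence also give zero mass to $E_0$, so the correspondence extends to the full simplex.
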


\begin{rmk} Random maps of the interval have been studied primarily with an eye towards the existence and properties of absolutely continuous invariant measures, e.g., see \cite{bowen1979invariant,Quas}. While the entropy conjugacy guaranteed by Theorem \ref{Thm:EntropyConj} provides a correspondence between ergodic measures of large entropy on $X$ and on $\Sigma_M$, it does not address questions about the whether any of these measures is absolutely continuous on $X$.
\end{rmk}

Let us now answer a question of Karl Petersen (personal communication). Let $\mathcal{H}(\mathcal{F})$ denote the set of real numbers $r >0$ such that there exists a multi-map $F \in \mathcal{F}$ having trajectory space $X$ with $\htop(X,\sigma_X) = r$. Recall that Lind has characterized the set of positive real numbers that arise as the entropy of a SFT as the set of all positive rational multiples of logarithms of Perron numbers \cite{Lind1984}. 

\begin{thm}\label{Thm:Realization}
The set $\mathcal{H}(\mathcal{F})$ is equal to the set of all positive rational multiples of logarithms of Perron numbers.
\end{thm}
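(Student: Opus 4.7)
The plan is to prove both inclusions. One direction is essentially automatic from the work already done. By the corollary of Theorem~\ref{Thm:EntropyConj}, for every $F \in \mathcal{F}$ we have $\htop(X(F),\sigma_X) = \htop(\Sigma_{M(F)}, \sigma_M)$. Combined with Lind's theorem \cite{Lind1984}, this shows $\mathcal{H}(\mathcal{F})$ is contained in the set of positive rational multiples of logarithms of Perron numbers.

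For the reverse inclusion, I would fix $r > 0$ in that set. By Lind's theorem there is a shift of finite type $\Sigma$ with $\htop(\Sigma) = r$. My goal is to construct $F \in \mathcal{F}$ such that the associated matrix $M(F)$ gives rise to an SFT with entropy $r$; the corollary of Theorem~\ref{Thm:EntropyConj} then delivers $\htop(X(F),\sigma_X) = r$. Concretely, I would pass (via a higher-block recoding if necessary) to a vertex presentation of $\Sigma$ as a $1$-step SFT on a finite directed graph $G$ with vertex set $V$ and adjacency matrix $A$. Assign to each vertex $v \in V$ a nondegenerate closed subinterval $I_v \subset [0,1]$ so that the $I_v$ have pairwise disjoint interiors and cover $[0,1]$. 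I would then define $F$ on $I_v$ so that $F(I_v) = \bigcup_{w : A(v,w) = 1} I_w$, with $F|_{I_v}$ realized by a finite union of strictly monotone branches, one branch $\varphi_{v,w} : I_v \to I_w$ for each outgoing edge $v \to w$, each surjecting $I_v$ onto $I_w$. This is the natural multi-map analogue of the classical graph-to-Markov-interval-map construction.

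The bulk of the work is to verify that the constructed $F$ lies in the class $\mathcal{F}$ of Definition~\ref{Defn:ClassF}, and that $M(F)$ (as defined in Section~\ref{Sect:Matrix}) carries the same entropy as $A$. Positive entropy is harmless, because any Perron number greater than $1$ gives $r > 0$, and on each nontrivial strongly connected component of $G$ the entropy of $A$ is positive (if $r = 0$ were allowed it would be excluded by the statement). The substantive task is to arrange the branches $\varphi_{v,w}$ so that the multi-map is Markov and properly parametrized in the sense of Definition~\ref{Defn:ProperParametrization}, and so that the coding and avoiding word sets are complete. I would do this by giving each branch an explicit affine form with slope chosen to avoid parameter collisions at the partition endpoints, so that the symbolic itinerary of a point under $F$ uniquely recovers the trajectory and no spurious identifications occur. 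Under such a choice, the combinatorial matrix $M(F)$ assembled in Section~\ref{Sect:Matrix} is (conjugate to) $A$ itself, so $\htop(\Sigma_{M(F)}) = \htop(\Sigma) = r$.

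The main obstacle I anticipate is the verification of the completeness of the coding and avoiding word sets; this is the technical condition singled out in Definition~\ref{Defn:ClassF} as the one most sensitive to how the branches are chosen. The construction above is meant to trivialize this check by design (each $I_v$ is crossed exactly once by each branch, so words coding admissible paths in $G$ appear, and words corresponding to nonedges cannot), but the detailed argument will require unpacking the definitions from Section~\ref{Sect:MarkovMultiMaps} and checking them branch by branch. Positive entropy and proper parametrization should then follow routinely from the explicit affine form of the branches.
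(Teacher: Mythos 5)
The first inclusion, $\mathcal{H}(\mathcal{F}) \subseteq \{\text{positive rational multiples of } \log(\text{Perron numbers})\}$, is correct and agrees with the paper's (implicit) treatment via the corollary to Theorem~\ref{Thm:EntropyConj} together with Lind's theorem.

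For the reverse inclusion, your proposed construction has a genuine gap, and it is exactly at the step you flagged as the ``main obstacle.'' If each branch $\varphi_{v,w}$ surjects $I_v$ exactly onto $I_w$, where the $I_v$ are the intervals of the Markov partition $P$, then the alphabet symbol $a$ corresponding to the edge $(v,w)$ has $D(a)=I_v$ and $R(a)=I_w$, and the matrix $M(F)$ encodes the edge shift of $G$. But for any legal word $a_0\cdots a_n\in\mathcal{L}_n$ you then have $D(a_{i+1})=R(a_i)$ \emph{exactly}, not merely $D(a_{i+1})\subset R(a_i)$. Hence $f_{a_{n-1}}^{-1}(D(a_n)) = f_{a_{n-1}}^{-1}(R(a_{n-1})) = D(a_{n-1})$, and inductively $I_{a_0\cdots a_n}=D(a_0)$ for every $n$. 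So $I_{\mathbf{a}}=D(a_0)$ is a nondegenerate partition interval for every $\mathbf{a}\in\Sigma_M$: it is never a singleton, and it always meets $P$ at its endpoints. Therefore there is no coding word and no avoiding word for any positive-entropy component, so $F\notin\mathcal{F}$ by Definition~\ref{Defn:ClassF}, and Theorem~\ref{Thm:EntropyConj} does not apply. Choosing affine branches with ``slopes that avoid parameter collisions'' does not address this: the obstruction is not a boundary coincidence but the fact that each branch is a bijection of one full partition interval onto another, so the pull-back operation never shrinks anything.

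The paper's construction is designed precisely to break this rigidity. Since $\Sigma_M$ has positive entropy, some row of $M$ has two ones, and after a permutation one may take them to be in columns $k$ and $k+1$ of row $1$. Rather than two separate branches over $[1,3/2]\times[k,k+1/2]$ and $[1,3/2]\times[k+1,k+3/2]$, the paper uses a single branch $a^*$ whose graph is the line from $(1,k)$ to $(3/2,k+3/2)$; its range $[k,k+3/2]$ is a union of \emph{three} partition intervals, so every occurrence of $a^*$ in a word $u$ shrinks $I_u$ by a factor of $3$. This makes $a^*$ a coding word, and an avoiding word is manufactured from it. The construction then fills the remaining parts of the domain (the gaps $[i+1/2,i+1]$ and the top portion) with auxiliary branches that all feed into a zero-entropy tail, and checks that these additions do not alter the entropy of the associated SFT. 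This merging-plus-padding device is the essential idea your proposal is missing; once it is in place, the verifications you anticipate (proper parametrization, positivity of entropy, completeness of coding/avoiding words) do go through as you expected.
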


\subsection{Organization of the paper}

In Section \ref{Sect:Background}, we provide background information and notation concerning shifts of finite type, ergodic theory, and Borel entropy conjugacy. Section \ref{Sect:MarkovMultiMaps} introduces Markov multi-maps and the class $\mathcal{F}$ of interest. Taken together, Sections \ref{Sect:PreliminaryResults} -- \ref{Sect:MainProof} contain the proof of our main result, Theorem \ref{Thm:EntropyConj}. In Section \ref{Sect:Sufficient} we establish some sufficient conditions for a Markov multi-map to be in $\mathcal{F}$, and then in Section \ref{Sect:Realization} we prove the realization result, Theorem \ref{Thm:Realization}. Finally, Section \ref{Section:Examples} contains some examples of Markov multi-maps.

\section{Background and notation} \label{Sect:Background}

We denote by $2^{[0,1]}$ the set of all non-empty, closed subsets of $[0,1]$. A \emph{multi-map} on $[0,1]$ is a function $F\colon[0,1]\to2^{[0,1]}$. The \emph{graph} of a multi-map $F$ is the set $G(F)=\{(x,y)\in[0,1]^2\colon y\in F(x)\}$.  
A \emph{trajectory} for $F$ is a sequence $(x_0,x_1,\ldots)\in[0,1]^{\Z_+}$ such that for all $n\geq 1$, we have $x_n\in F(x_{n-1})$, or equivalently $(x_{n-1},x_n)\in G(F)$. We denote by $X = X(F)$ the set of trajectories for $F$, and we give $X$ the topology it inherits as a subspace of $[0,1]^{\Z_+}$ with the product topology. We also define the \emph{left-shift} on $X$, denoted $\sigma_X$, by setting $\sigma_X(x_0,x_1,\ldots)=(x_1,x_2,\ldots)$. Observe that $\sigma_X$ is a continuous 
mapping on $X$, and if $G(F)$ is closed in $[0,1]^2$, then $X$ is closed in $[0,1]^{\Z_+}$.

\subsection{Shifts of finite type}

Let $\mathcal{A}$ be a finite set, which we call the \emph{alphabet}. An element $b\in\mathcal{A}^n$ is called a \emph{word} of length $n$. The full shift on $\mathcal{A}$ is $\Sigma=\mathcal{A}^{\Z_+}$, endowed with the product topology induced by the discrete topology on $\mathcal{A}$. Given a set of words $\mathcal{F}$, we may define $\Sigma_\mathcal{F}\subseteq\Sigma$ to be the set of points that do not contain any word in $\mathcal{F}$. We refer to words in $\mathcal{F}$ as \emph{forbidden words}. Then $\Sigma_\mathcal{F}$ is closed and invariant under the left-shift on $\Sigma$. If $\mathcal{F}$ is finite, then we refer to $\Sigma_\mathcal{F}$ as a \emph{shift of finite type (SFT)}. In this work we restrict attention to SFTs for which all the forbidden words have length two, called nearest neighbor SFTs. For more on SFTs, we refer the reader to the book \cite{Lind1995}.

Any nearest neighbor SFT may be expressed in terms of a directed graph, $(V,E)$, where the set of vertices $V$ is equal to $\mathcal{A}$, and given $a,b\in\mathcal{A}$, there is an edge from $a$ to $b$ in the edge set $E$ if and only if $ab\notin\mathcal{F}$. Furthermore, we associate to any such graph its \emph{adjacency matrix} $M$, defined  as the square matrix indexed by $\mathcal{A}$ such that for $a,b\in\mathcal{A}$, if $ab\notin\mathcal{F}$ then $M(a,b) = 1$, and otherwise $M(a,b)=0$. Note that any zero-one matrix indexed by $\mathcal{A}$ also defines an associated nearest neighbor SFT (by letting $ab$ be a forbidden word whenever $M(a,b) = 0$). The nearest neighbor SFT defined by a zero-one matrix $M$ is denoted by $\Sigma_M$, and the left-shift restricted to $\Sigma_M$ is denoted by $\sigma_M$.

In what follows it is convenient to have some notation for words of arbitrary length that do not contain any forbidden word. For $n \geq 1$, we let $\mathcal{L}_n$ denote the set of words $a_0 \dots a_n \in \mathcal{A}^{n+1}$ such that $M(a_i,a_{i+1}) = 1$ for each $i = 0, \dots, n-1$. Then let
\[
\mathcal{L} = \bigcup_{n \geq 0} \mathcal{L}_n,
\]
where $\mathcal{L}_0 = \mathcal{A}$.

A nearest neighbor SFT defined by the matrix $M$ is \emph{irreducible} if for every pair of non-empty, open sets $U,V\subseteq\Sigma_M$, there exists $n\geq 1$ such that $\sigma_M^n(U)\cap V\neq \emptyset$. Equivalently, $\Sigma_M$ is irreducible if for each $a,b \in \mathcal{A}$, there exists $n \geq 1$ such that $M^n(a,b) >0$. 

Consider an arbitrary nearest neighbor SFT $\Sigma_M$ on alphabet $\mathcal{A}$. It has an associated finite directed graph $\Gamma$, with vertex set $\mathcal{A}$ and an edge from $a$ to $b$ whenever $M(a,b) = 1$. Let $\mathcal{C}_1,\dots,\mathcal{C}_K \subset \mathcal{A}$ be the vertex sets of the maximal strongly connected components of $\Gamma$, which we call the irreducible components of $\Gamma$. For each $\mathcal{C}_k$, the set of points in $\Sigma_M$ containing only symbols from $\mathcal{C}_k$ forms an irreducible SFT, which we denote by $\Sigma_M(\mathcal{C}_k)$. We refer to $\Sigma_M(\mathcal{C}_k)$ as an irreducible component of $\Sigma_M$. Note that the irreducible components $\Sigma_M(\mathcal{C}_1), \dots, \Sigma_M(\mathcal{C}_k)$ are pairwise disjoint, and the set $\Sigma_{M} \setminus \bigcup_k \Sigma_M(\mathcal{C}_k)$ contains only wandering points. See \cite[Chapter 4]{Lind1995} for more details on this decomposition. We also denote by $\mathcal{L}(\mathcal{C}_k)$ the set of words of arbitrary length on $\mathcal{C}_k$ that do not contain a forbidden words. 


\subsection{Invariant measures and entropy}

In this work a topological dynamical system consists of a pair $(\mathcal{X},T)$, where $T : \mathcal{X} \to \mathcal{X}$ is a continuous self-map of a compact metrizable space. For any such system, we let $\mathcal{M}(\mathcal{X},T)$ denote the set of Borel probability measures $\mu$ on $\mathcal{X}$ such that $\mu(E) = \mu(T^{-1} E)$ for all Borel sets $E \subset \mathcal{X}$. Note that $\mathcal{M}(\mathcal{X},T)$ is a nonempty, convex set that is compact in the weak$^*$ topology. A measure $\mu \in \mathcal{M}(\mathcal{X},T)$ is called ergodic if $\mu(E) \in \{0,1\}$ for all Borel sets $E$ such that $T^{-1}(E) \subset E$. The set of ergodic measures is denoted by $\mathcal{M}_e(\mathcal{X},T)$. Note that a measure $\mu \in \mathcal{M}(\mathcal{X},T)$ is an extreme point in $\mathcal{M}(\mathcal{X},T)$ if and only if $\mu$ is ergodic.

The following notation is used in subsequent sections. For any Borel set $E \subset \mathcal{X}$, the union of all of its pre-images is denoted
\[
\Pre(E) = \bigcup_{ n \geq 0} T^{-n}(E).
\]
Note that $T^{-1}( \Pre(E)) \subset \Pre(E)$, and therefore if $\mu \in \mathcal{M}_e(\mathcal{X},T)$ then $\mu(\Pre(E)) \in \{0,1\}$.

We also require some elementary facts regarding the entropy theory of dynamical systems. Complete definitions and proofs can be found in \cite{Walters2000}. Let $\htop(T)$ denote the topological entropy of the topological system $(\mathcal{X},T)$. Furthermore, when the system $(\mathcal{X},T)$ is understood and $\mu \in \mathcal{M}(\mathcal{X},T)$, we denote the measure-theoretic entropy of $\mu$ by $h(\mu)$. The standard variational principle for entropy states that 
\[
\htop(T) = \sup_{ \mu \in \mathcal{M}(\mathcal{X},T)} h(\mu),
\]
and the supremum may be taken over only the ergodic measures. Furthermore, for SFTs it is known that the supremum is achieved, and if the SFT is irreducible, then it has a unique measure of maximal entropy.
Furthermore, we note for future use that an irreducible SFT is entropy minimal, i.e., if $X$ is an irreducible SFT of positive entropy and $Y$ is a strict subset of $X$, then $\htop(Y,\sigma|_Y) < \htop(X,\sigma|_X)$ (see \cite{Lind1995} for a proof).

\subsection{Entropy conjugacy}

We adopt the following definition of entropy for Borel sets (following Buzzi \cite{Buzzi1997}).
\begin{defn}
	Let $T : \mathcal{X} \to \mathcal{X}$ be a topological dynamical system. For a Borel set $E \subset \mathcal{X}$, let 
	\[
	\hprob(E) = \sup \bigl\{ h(\mu) : \mu \in \mathcal{M}_e(\mathcal{X},T), \, \mu(E) >0 \bigr\}.
	\]
\end{defn}

Now we define a notion of entropy conjugacy, which was previously introduced by Buzzi \cite{Buzzi1997}.
\begin{defn} \label{Defn:EntropyConj}
	Suppose that $T_0 : \mathcal{X}_0 \to \mathcal{X}_0$ and $T_1 : \mathcal{X}_1 \to \mathcal{X}_1$ are topological dynamical systems. We say that they are \textit{Borel entropy conjugate} if there exist Borel sets $E_0 \subset \mathcal{X}_0$ and $E_1 \subset \mathcal{X}_1$ and an invertible Borel bi-measurable map $\psi : \mathcal{X}_0 \setminus E_0 \to \mathcal{X}_1 \setminus E_1$ such that
	\begin{itemize}
		\item $\hprob(E_0) < \htop(\mathcal{X}_0,T_0)$;
		\item $\hprob(E_1) < \htop(\mathcal{X}_1,T_1)$; and
		\item $\psi \circ T_0 = T_1 \circ \psi$ on $\mathcal{X}_0 \setminus E_0$.
	\end{itemize}
\end{defn}

It is an easy corollary of the variational principle for topological dynamical systems that if $(\mathcal{X}_0,T_0)$  and $(\mathcal{X}_1,T_1)$ are Borel entropy conjugate, then $\htop(\mathcal{X}_0,T_0) = \htop(\mathcal{X}_1,T_1)$.

\begin{rmk}
In his work on topological entropy for non-compact sets, Bowen introduced a notion that he called entropy conjugacy \cite{bowen1973topological}. Bowen's definition of entropy conjugacy requires that the sets $E_0$ and $E_1$ have smaller topological entropy (in the dimension-theoretic sense defined in his paper) than the full system and that the conjugating map $\psi$ is continuous. As such, Bowen's notion of entropy conjugacy is stronger than the notion of Borel entropy conjugacy defined above.
\end{rmk}

\section{Markov multi-maps} \label{Sect:MarkovMultiMaps}

We now give a precise definition of Markov multi-maps on the interval $[0,1]$. This definition is based on the one given in \cite{AlvinKelly2019}, though our definition is slightly less general.

\begin{defn}
A Markov multi-map $F$ of the interval $[0,1]$ is defined by a tuple $(P, \mathcal{A}_0, \mathcal{A}_1,\mathcal{A}_2, D, R, \{ f_a \}_{a \in \mathcal{A}_0})$ satisfying the following conditions:
\begin{enumerate}
 \item $P = \{p_0,\dots,p_r\}$ is a partition of the interval $[0,1]$ with $0 = p_0 < \dots < p_r = 1$;
 \item $\mathcal{A} = \mathcal{A}_0 \sqcup \mathcal{A}_1 \sqcup \mathcal{A}_2$ is a finite set;
 \item $D : \mathcal{A} \to 2^{[0,1]}$, and for each $a \in \mathcal{A}$, there exists $p_i \in P$ such that 
 \[
 D(a) = \left\{ \begin{array}{ll}
                     [p_i,p_{i+1}], & \text{if }  a \in \mathcal{A}_0 \\
                      \{p_i\}, & \text{if } a \in \mathcal{A}_1 \cup \mathcal{A}_2;
                    \end{array}
                    \right.
 \]
 \item $R : \mathcal{A} \to 2^{[0,1]}$, and for each $a \in \mathcal{A}$, there exists $u \leq v$ in $P$ such that $R(a) = [u,v]$ and
 \[
 \left\{ \begin{array}{ll}
             u < v, & \text{if } a \in \mathcal{A}_0 \\
             u < v \text{ and } R(a) \cap P = \{u,v\}, & \text{if } a \in \mathcal{A}_1 \\
             u = v, & \text{if } a \in \mathcal{A}_2;
          \end{array}
          \right.
 \]
   \item for each $a \in \mathcal{A}_0$, the map $f_a : D(a) \to R(a)$ is a homeomorphism;
 \item $[0,1] \subset \bigcup_{a \in \mathcal{A}} D(a)$.
\end{enumerate}
\end{defn}


\subsection{The graph of a Markov multi-map}

Let $F$ be a Markov multi-map.
For $a \in \mathcal{A}_0$, let $G(a)$ denote the graph of $f_a$. 
For $a \in \mathcal{A}_1 \cup \mathcal{A}_2$, let $G(a) = D(a) \times R(a)$. 
Then the graph of $F$ is
\[
G(F) = \bigcup_{a \in \mathcal{A}} G(a).
\]
Note that each $G(a)$ is closed in $[0,1] \times [0,1]$, and so is $G(F)$.
Some examples of Markov multi-maps and their graphs are given in Section~\ref{Section:Examples}.

Now we make some additional graph-related definitions that are used repeatedly throughout this work.
\begin{defn}
Let $a \in \mathcal{A}$.
\begin{itemize}
\item Suppose $a \in \mathcal{A}_0$ with $D(a) = [p_i,p_{i+1}]$ and $R(a) = [u,v]$. Define $D_0(a) = (p_i,p_{i+1})$ and $R_0(a) = (u,v)$, and let $G_0(a)$ be the graph of $f_a|_{D_0(a)}$.

\vspace{2mm}

\item Suppose $a \in \mathcal{A}_1$ with $D(a) = \{p\}$ and $R(a) = [u,v]$. Define $D_0(a) = \{p\}$ and $R_0(a) = (u,v)$, and let $G_0(a) = \{p\} \times R_0(a)$.

\vspace{2mm}

\item Suppose $a \in \mathcal{A}_2$ with $D(a) = \{p\}$ and $R(a) = \{q\}$. Define $D_0(a) = \{p\}$ and $R_0(a) = \{q\}$, and let $G_0(a) = \{(p,q)\}$.
\end{itemize}
\end{defn}

Our results require that $F$ has some additional structure, which we now begin to define.
\begin{defn}
We say that $F$ satisfies the \textit{no crossing property} if the following holds: for all $a,b \in \mathcal{A}_0$, if $G_0(a) \cap G_0(b) \neq \varnothing$ then $a = b$.
\end{defn}

The following property strictly implies the no crossing property.
\begin{defn} \label{Defn:ProperParametrization}
We say that $F$ is \textit{properly parametrized} if the collection $\{G_0(a) : a \in \mathcal{A} \}$ forms a partition of $G(F)$. 
\end{defn} 

We think of the no crossing property as a property of the graph $G(F)$ (and the partition $P$), whereas being properly parametrized depends on the particular parametrization of the Markov multi-map $F$. However, these properties are related by Lemma \ref{Lemma:ProperParam}:  if $F_0$ is a Markov multi-map with the no crossing property, then there exists a properly parametrized Markov multi-map $F_1$ such that $G(F_0) = G(F_1)$. 

\begin{rmk} 
If $F$ is a Markov multi-map, then it possesses the following graph \textit{Markov property}: for all $a,b \in \mathcal{A}$, if $D_0(b) \cap R_0(a) \neq \varnothing$, then $D_0(b) \subset R_0(a)$. This property is used to define the SFT associated with $F$, which appears in the next section.
\end{rmk}

\subsection{The SFT associated to a Markov multi-map} \label{Sect:Matrix}

We associate to any Markov multi-map $F$ an SFT as follows.
Let $M$ be the square matrix indexed by $\mathcal{A}$ such that for $a,b \in \mathcal{A}$,
\[
M(a,b) = \left\{ \begin{array}{ll} 
                           1, & \text{if } D_0(b) \subset R_0(a) \\
                           0, & \text{otherwise}.
                       \end{array}
                       \right.
\]
Let $\Sigma_M \subset \mathcal{A}^{\mathbb{Z}_+}$ be the nearest neighbor SFT with alphabet $\mathcal{A}$ and transition matrix $M$.
%
%
%
%

In our main results, we relate the SFT $\Sigma_M$ to the trajectory space $X$. In particular, Theorem~\ref{Thm:EntropyConj} establishes sufficient conditions for these systems to be Borel entropy conjugate.

%

\subsection{Nested intervals}

Let $F$ be a properly parametrized Markov multi-map with associated matrix $M$.
Here we associate to each sequence $\mathbf{a} \in \Sigma_M$ a nonempty closed (possibly degenerate) interval in $[0,1]$.
To begin, for each $a \in \mathcal{A}_0$, we let $f_a^{-1}$ be the standard inverse function (which exists since $f_a$ is assumed to be a homeomorphism).
For $a \in \mathcal{A}_1 \cup \mathcal{A}_2$, we let $f_a^{-1}$ be the unique map such that $f_a^{-1} : R(a) \to D(a)$ (which exists since $R(a)$ is non-empty and $D(a)$ is a singleton in this case).

Let $u = a_0 \dots a_n \in \mathcal{L}_n$.
Define the set
\[
I_{u} = f_{a_0}^{-1} \circ \dots \circ f_{a_{n-1}}^{-1}( D(a_n))
\]
We make the following elementary observations.
\begin{itemize}
\item $I_u$ is non-empty. (Since $M(a_i,a_{i+1}) = 1$, we have that $D(a_{i+1}) \subset R(a_i)$, so $f_{a_i}^{-1}$ maps $D(a_{i+1})$ into $D(a_i)$.)
\item $I_u$ is a closed (possibly degenerate) interval, since $I_u$ is the image of the closed interval $D(a_n)$ under the continuous, monotone map $f_{a_0}^{-1} \circ \dots \circ f_{a_{n-1}}^{-1}$.
\item $I_{a_0 \dots a_{n+1}} = f_{a_0}^{-1} (I_{a_1 \dots a_{n+1}})$.
\item $I_{a_0 \dots a_{n+1}} \subset I_{a_0 \dots a_n}$ (since $f_{a_n}^{-1}(D(a_{n+1})) \subset D(a_n)$).
\end{itemize}

Now consider $\mathbf{a} = (a_n)_{n=0}^{\infty} \in \Sigma_M$. Then $\{I_{a_0 \dots a_n}\}_{n=1}^{\infty}$ is a nested sequence of non-empty, closed intervals in $[0,1]$. Let
\[
I_{\mathbf{a}} = \bigcap_{n=1}^{\infty} I_{a_0 \dots a_n}.
\]
Then $I_{\mathbf{a}}$ is a non-empty, closed interval. Additionally, we note that
\[
I_{\mathbf{a}} = f_{a_0}^{-1}( I_{\sigma(\mathbf{a})}).
\]
These intervals appear in the next section in the definitions that characterize the class of Markov multi-maps in our main results.

\subsection{Definition of the class $\mathcal{F}$} \label{Sect:ClassF}

In this section we define the class $\mathcal{F}$ of Markov multi-maps that appears in our main results.  Let $F$ be a properly parametrized Markov multi-map with associated matrix $M$.

\begin{defn}
Suppose $\mathcal{C} \subset \mathcal{A}$ is an irreducible component of the graph with adjacency matrix $M$. We say that $\mathcal{C}$ has a \textit{coding word} if there exists $u \in \mathcal{L}(\mathcal{C})$ such that if $\mathbf{a} \in \Sigma_M(\mathcal{C})$ and $\{ n \geq 0 : \sigma^n(\mathbf{a}) \in [u] \}$ is infinite, then $I_{\mathbf{a}}$ is a singleton. Furthermore, we say that $F$ has a \textit{complete set of coding words} if each irreducible component with positive entropy has a coding word.
\end{defn}


\begin{defn}
Suppose $\mathcal{C} \subset \mathcal{A}$ is an irreducible component of the graph with adjacency matrix $M$. We say that $\mathcal{C}$ has an \textit{avoiding word} if there exists $u \in \mathcal{L}(\mathcal{C})$ such that if $\mathbf{a} \in [u]$, then $I_{\mathbf{a}} \cap P = \varnothing$. Furthermore, we say that $F$ has a \textit{complete set of avoiding words} if the following condition holds: if $\mathcal{C}$ is an irreducible component with positive entropy that is entirely contained in $\mathcal{A}_0$, then $\mathcal{C}$ has an avoiding word.
\end{defn}

Now we are prepared to give a precise definition of the class of Markov multi-maps that appears in our main results.

\begin{defn} \label{Defn:ClassF}
The class $\mathcal{F}$ consists of all properly parametrized Markov multi-maps $F$ such that $F$ has a complete set of coding words, $F$ has a complete set of avoiding words, and the associated SFT $\Sigma_M$ has positive entropy. 
\end{defn}

\subsection{Finite labeled trajectories}

Here we define some additional terminology that is useful in the following sections.

\begin{defn}
Let $F \in \mathcal{F}$. Let $m \geq 1$.
We say that  $x = x_0,\dots,x_m \in [0,1]^{m+1}$ is a \textit{finite trajectory} of $F$ if 
\[
(x_n,x_{n+1}) \in G(F), \quad \forall n = 0,\dots,m-1.
\]

Next, we say that $(x,b) \in [0,1]^{m+1} \times \mathcal{L}_{m-1}$ is a \textit{finite labeled trajectory} (of length $m+1$) if 
\[
(x_n,x_{n+1}) \in G(b_n), \quad \forall n = 0,\dots,m-1.
\]
Let $\mathcal{T}_{m}$ be the set of finite labeled trajectories of length $m+1$. We endow $\mathcal{T}_m$ with the subspace topology inherited from $[0,1]^{m+1} \times \mathcal{L}_{m-1}$ (which has the product of the usual topology on $[0,1]^{m+1}$ and the discrete topology on $\mathcal{L}_{m-1}$).

Finally, we say that $(x,b) \in \mathcal{T}_m$ is a \textit{special finite labeled trajectory} of length $m+1$ if
\[
(x_n,x_{n+1}) \in G_0(b_n), \quad \forall n = 0,\dots,m-1.
\]
Let $\mathcal{S}_m$ denote the set of special finite labeled trajectories of length $m+1$, and we let $\mathcal{S}_m$ inherit the subspace topology inherited from $\mathcal{T}_m$.
\end{defn}

\begin{rmk}
Let $F$ be in $\mathcal{F}$, and let $x$ be a finite trajectory of $F$ of length $m+1$. 
Since $G(F)$ is the union of the sets $\{G(a)\}_{a \in \mathcal{A}}$, there exists $b \in \mathcal{L}_{m-1}$ such that $(x,b)$ is in $\mathcal{T}_m$. In fact, since $F$ is properly parametrized, the sets $\{G_0(a)\}_{a \in \mathcal{A}}$ form a partition of $G(F)$, and therefore there exists a \textit{unique} element $b \in \mathcal{L}_{m-1}$ such that $(x,b)$ is in $\mathcal{S}_m$. 
\end{rmk}

\section{Preliminary results} \label{Sect:PreliminaryResults}

\subsection{Parametrization lemma}

The following simple result states that any Markov multi-map with the no-crossing property can be properly parametrized without changing its graph. Since the space of trajectories of a Markov multi-map depends only on its graph, this reparametrization also preserves the space of trajectories.

\begin{lemma} \label{Lemma:ProperParam}
Suppose that $F_0$ is a Markov multi-map with the no-crossing property. Then there exists a properly parametrized Markov multi-map $F_1$ with $G(F_0) = G(F_1)$.
\end{lemma}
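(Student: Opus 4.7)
The plan is to construct $F_1$ by inheriting the $\mathcal{A}_0$ labels of $F_0$ unchanged and then introducing a fresh collection of $\mathcal{A}_1$ and $\mathcal{A}_2$ labels that re-tile the remaining portion of $G(F_0)$, so that the open-graph pieces $G_0(a)$ are pairwise disjoint and jointly cover $G(F_0)$. The no-crossing hypothesis handles the $\mathcal{A}_0$ part: if we set $\mathcal{A}_0(F_1) := \mathcal{A}_0(F_0)$ with identical data $(D(a), R(a), f_a)$, then $\{G_0(a) : a \in \mathcal{A}_0(F_1)\}$ is automatically a pairwise disjoint family.

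The key step is to analyze the leftover
\[
H := G(F_0) \setminus \bigcup_{a \in \mathcal{A}_0(F_0)} G_0(a)
\]
and show that it can be partitioned into basic open vertical segments and isolated grid points. Using conditions (3)--(5) of the Markov multi-map definition --- in particular that each $f_a$ is a monotone homeomorphism sending partition points to partition points --- one argues that $H$ lies on the grid $\bigcup_{p \in P} \{p\} \times [0,1]$, and that on each vertical line $\{p_i\} \times [0,1]$ the slice $V_i := \{y : (p_i, y) \in H\}$ is a finite union of basic closed intervals $[p_j, p_{j+1}]$ (contributed by the $\mathcal{A}_1(F_0)$ pieces at $p_i$) together with isolated points of $P$ (contributed by the $\mathcal{A}_2(F_0)$ pieces and by the endpoints of the $\mathcal{A}_0$ graphs that project to $p_i$).

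Given this slice decomposition, I would build $F_1$ as follows: for each $p_i \in P$ and each basic subinterval $[p_j, p_{j+1}] \subset V_i$ add one fresh label to $\mathcal{A}_1(F_1)$ with $D = \{p_i\}$ and $R = [p_j, p_{j+1}]$, and for each grid point $(p_i, p_j) \in H$ not already contained in a $G_0(a)$ constructed so far add one fresh label to $\mathcal{A}_2(F_1)$ with $D = \{p_i\}$ and $R = \{p_j\}$. Conditions (1)--(5) of a Markov multi-map are then immediate (the requirement $R \cap P = \{u,v\}$ for $\mathcal{A}_1$ is automatic since the chosen ranges are basic intervals), condition (6) is inherited from $F_0$ because any $p \in P$ that lay in some $F_0$-domain now lies in the domain of at least one new label, and the open-graph pieces tile $G(F_0)$ by the slice decomposition, giving a proper parametrization with $G(F_1) = G(F_0)$. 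The main obstacle is establishing the slice decomposition of $H$ itself --- combining the three kinds of boundary data contributed by the $\mathcal{A}_0, \mathcal{A}_1, \mathcal{A}_2$ pieces of $F_0$ and invoking the Markov property that partition points map to partition points --- since once that structural fact is in place the remaining verification is cosmetic bookkeeping.
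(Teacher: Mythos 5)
Your construction matches the paper's proof essentially verbatim: keep $\mathcal{A}_0$ unchanged (the no-crossing property handles disjointness there), replace $\mathcal{A}_1$ by the collection of basic vertical segments $\{p_i\}\times[p_j,p_{j+1}]$ contained in some $G(a)$ with $a\in\mathcal{A}_1(F_0)$, and replace $\mathcal{A}_2$ by the set of all grid points $(p,q)\in G(F_0)\cap(P\times P)$. The ``slice decomposition'' you flag as the main obstacle is in fact immediate from the trichotomy $x\notin P$ versus $x\in P, y\notin P$ versus $(x,y)\in P\times P$, and the paper simply states the three sets without further comment.
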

\begin{proof}
Let $F$ be a Markov multi-map with the no-crossing property. Let $\mathcal{B}_0 = \mathcal{A}_0$. Let
\[
\mathcal{B}_1 = \biggl\{ \{p_i\} \times [p_j,p_{j+1}] : p_i,p_j \in P \text{ and } \exists a \in \mathcal{A}_1, \{p_i\} \times [p_j,p_{j+1}] \subset G(a) \biggr\}.
\]
Let 
\[
\mathcal{B}_2 = \biggl\{ (p,q) : p,q \in P, (p,q) \in G(F) \biggr\}.
\]
Then let $F_1$ be the Markov multi-map defined by $\mathcal{B}_0$, $\mathcal{B}_1$, and $\mathcal{B}_2$.
\end{proof}

\subsection{Graph lemmas}

In this section we prove a few facts about graphs of Markov multi-maps. 
Throughout the remainder of this section, we consider $F \in \mathcal{F}$. 
Since $F$ is properly parametrized, we know that if $a,b\in\mathcal{A}$ are distinct elements, then $G_0(a)\cap G_0(b)=\varnothing$. However, it is possible that $a \neq b$ and yet $G(a)$ has nontrivial intersection with $G(b)$. The following lemma shows that any such intersection must be contained in $P\times P$.

\begin{lemma} \label{Lemma:Marvin}
Let $F$ be in  $\mathcal{F}$.
Suppose $(x,y) \in G(F) \setminus (P \times P)$. Then there is a unique $a \in \mathcal{A}$ such that $(x,y) \in G(a)$, and furthermore $(x,y) \in G_0(a)$.
\end{lemma}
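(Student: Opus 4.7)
The plan is to reduce the lemma to an elementary observation: that for every symbol $b \in \mathcal{A}$, the difference $G(b) \setminus G_0(b)$ is contained in $P \times P$. Once this is established, the conclusion follows almost immediately from proper parametrization. Indeed, since $F \in \mathcal{F}$ is properly parametrized, the family $\{G_0(a) : a \in \mathcal{A}\}$ partitions $G(F)$, so for the given point $(x,y) \in G(F)$ there is a unique $a^* \in \mathcal{A}$ with $(x,y) \in G_0(a^*)$, and in particular $(x,y) \in G(a^*)$. To prove uniqueness of $a^*$ among all symbols with $(x,y) \in G(a)$, suppose $(x,y) \in G(b)$ for some $b \neq a^*$. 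Then $(x,y) \notin G_0(b)$, so $(x,y) \in G(b) \setminus G_0(b)$, and from the claim this forces $(x,y) \in P \times P$, contradicting the hypothesis that $(x,y) \notin P \times P$.

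The main technical step is therefore the verification that $G(b) \setminus G_0(b) \subset P \times P$ for each $b \in \mathcal{A}$, which I would handle by a short case analysis based on whether $b \in \mathcal{A}_0$, $\mathcal{A}_1$, or $\mathcal{A}_2$. For $b \in \mathcal{A}_0$ with $D(b) = [p_i,p_{i+1}]$ and $R(b) = [u,v]$, the map $f_b$ is a homeomorphism of intervals whose endpoints $p_i, p_{i+1}, u, v$ all lie in $P$; since $G_0(b)$ is the graph over the open interval $(p_i, p_{i+1})$, the difference $G(b) \setminus G_0(b)$ consists of the two graph endpoints $(p_i, f_b(p_i))$ and $(p_{i+1}, f_b(p_{i+1}))$, both in $P \times P$. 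For $b \in \mathcal{A}_1$, we have $G(b) \setminus G_0(b) = \{(p_j,u),(p_j,v)\}$, and the condition $R(b) \cap P = \{u,v\}$ in the definition of a Markov multi-map guarantees $u,v \in P$. For $b \in \mathcal{A}_2$ one has $G(b) = G_0(b)$, so the difference is empty.

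I do not anticipate a significant obstacle here; the only thing one must be careful about is invoking the correct clause of the definition of a Markov multi-map in the $\mathcal{A}_1$ case (namely that $R(b) \cap P = \{u,v\}$), since without this condition the endpoints of $R(b)$ might not a priori be partition points. Proper parametrization is used only to guarantee existence and uniqueness of the $G_0$-symbol; the no-crossing property is not needed beyond what proper parametrization already implies.
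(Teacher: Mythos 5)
Your proof is correct and follows essentially the same line of reasoning as the paper's: both arguments reduce to the fact that for distinct $a,b$ the intersection $G(a)\cap G(b)$ lies inside $P\times P$, using proper parametrization (disjointness of the $G_0$'s) together with the observation that $G(b)\setminus G_0(b)\subset P\times P$. Your version is in fact somewhat more careful than the published proof, which cites the no-crossing property and asserts $G(a)\cap G(b)\subset P\times P$ directly without spelling out the case-by-case verification that $G(b)\setminus G_0(b)\subset P\times P$. One small misreading in your $\mathcal{A}_1$ case: the fact that $u,v\in P$ already follows from clause (4) of the definition of a Markov multi-map (which requires $u,v\in P$ for \emph{every} symbol), not from the extra hypothesis $R(b)\cap P=\{u,v\}$, which instead excludes interior partition points of $R(b)$; this does not affect the validity of your argument.
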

\begin{proof}
Since $(x,y) \in G(F) = \cup_a G(a)$, there must exist some $a \in \mathcal{A}$ such that $(x,y) \in G(a)$. For uniqueness, suppose that $(x,y) \in G(a) \cap G(b)$.  By the no-crossing property, for any $a \neq b$, we have $G(a) \cap G(b) \subset P \times P$. Since $(x,y) \notin P \times P$, we conclude that $a = b$. 

Since $(x,y) \notin P \times P$ and $(x,y) \in G(a)$, we see that $a \in \mathcal{A}_0 \cup \mathcal{A}_1$, and we must have $(x,y) \in G_0(a)$.
\end{proof}

The next lemma asserts that $G(F)$ cannot accumulate along a horizontal line to any point of $G(F) \cap (P \times P)$. 

\begin{lemma} \label{Lemma:Lennon}
Let $(p,q) \in G(F) \cap (P \times P)$. Then there exists an open set $U \subset [0,1] \times [0,1]$ such that $(p,q) \in U$ and if $(y,q) \in G_0(a) \cap U$, then $y = p$ and $a$ is the unique element of $\mathcal{A}_2$ such that $G(a) = \{(p,q)\}$.
\end{lemma}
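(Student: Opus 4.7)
The plan is to exploit proper parametrization to pin down a unique label $a^{*}$ carrying $(p,q)$ in its ``interior'' graph $G_0(a^{*})$, show that this $a^{*}$ must lie in $\mathcal{A}_2$, and then control the horizontal slice at height $q$ by a finite enumeration that isolates $p$ from all competing $y$-coordinates.

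First, because $F$ is properly parametrized, $\{G_0(a) : a \in \mathcal{A}\}$ is a partition of $G(F)$, so $(p,q)$ belongs to $G_0(a^{*})$ for a unique $a^{*}\in\mathcal{A}$. I would rule out the two bad cases:
\begin{itemize}
\item If $a^{*}\in\mathcal{A}_0$ with $D(a^{*})=[p_i,p_{i+1}]$, then $p\in D_0(a^{*})=(p_i,p_{i+1})$, an open cell containing no point of $P$, contradicting $p\in P$.
\item If $a^{*}\in\mathcal{A}_1$ with $R(a^{*})=[u,v]$, then the defining condition $R(a^{*})\cap P=\{u,v\}$ gives $R_0(a^{*})\cap P=\varnothing$, so $q\in R_0(a^{*})$ contradicts $q\in P$.
\end{itemize}
Therefore $a^{*}\in\mathcal{A}_2$, which forces $G(a^{*})=G_0(a^{*})=\{(p,q)\}$. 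Uniqueness of this $\mathcal{A}_2$-element is immediate from proper parametrization.

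Second, for the fixed $q$, let
\[
S_q \;=\; \{\,y\in[0,1] : \exists\,a\in\mathcal{A},\ (y,q)\in G_0(a)\,\}.
\]
Running the same case analysis on the horizontal slice, I get that each $a\in\mathcal{A}_0$ with $q\in R_0(a)$ contributes the single value $y=f_a^{-1}(q)\in D_0(a)$, which avoids $P$ and hence is distinct from $p$; each $a\in\mathcal{A}_1$ contributes nothing (by the same $R_0(a)\cap P=\varnothing$ argument applied to $q$); and each $a\in\mathcal{A}_2$ contributes at most one point $y=p_a\in P$, one of which is $p$ itself (from $a^{*}$). Thus $S_q$ is a finite set containing $p$.

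Third, I would choose $\epsilon>0$ smaller than $|y-p|$ for every $y\in S_q\setminus\{p\}$ (taking $\epsilon=1$ if $S_q=\{p\}$) and set $U=(p-\epsilon,p+\epsilon)\times(0,1)$ (intersected with $[0,1]^2$ if necessary). For any $(y,q)\in G_0(a)\cap U$, membership in $G_0(a)$ forces $y\in S_q$, while $|y-p|<\epsilon$ forces $y=p$; then $(p,q)\in G_0(a)$ combined with proper parametrization yields $a=a^{*}$. The main (very modest) obstacle is simply bookkeeping: one must cleanly use the fact that $D_0(a)$ misses $P$ for $a\in\mathcal{A}_0$ and that $R_0(a)$ misses $P$ for $a\in\mathcal{A}_1$, which are both encoded in the definition of Markov multi-map. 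No serious technical difficulty arises.
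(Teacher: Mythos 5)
Your proposal is correct and follows essentially the same strategy as the paper: the horizontal slice at height $q$ meets the graph of $F$ in only finitely many points, so $p$ can be isolated by a small open interval in the first coordinate, and proper parametrization then forces the label to be the unique $\mathcal{A}_2$ element covering $(p,q)$. One small slip worth fixing: the vertical factor of $U$ should be $[0,1]$ (or an open set in $[0,1]$ containing $q$), not $(0,1)$, since $q$ may equal $0$ or $1$ (both lie in $P$) and then $(p,q)\notin U$; the rest of the argument is unaffected because the constraint on the second coordinate in the hypothesis of the lemma is already fixed to $q$.
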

\begin{proof}
Let $L_q = [0,1] \times \{q\}$. By our definition of Markov multi-map, $L_q \cap G(F)$ is a finite set containing $(p,q)$. Then there exists a relatively open interval $I$ in $[0,1]$ such that $I \times \{q\} \cap G(F) = \{(p,q)\}$. Let $U = I \times [0,1]$. Then $U$ is open in $[0,1]\times[0,1]$ and if $(y,q) \in G_0(a)$, then $y = p$ and $a$ must be the unique element of $\mathcal{A}_2$ such that $G(a) = \{(p,q)\}$.
\end{proof}

The next two lemmas address the convergence of sequences in the space of finite labeled trajectories.

\begin{lemma} \label{Lemma:Sam}
Let $m \geq 1$.
Suppose that $x = x_0,\dots,x_{m+1} \in [0,1]^{m+2}$ is a finite trajectory of $F$ such that  
\begin{itemize}
\item $x_0,\dots,x_{m} \in P$, and
\item $x_{m+1} \in [0,1] \setminus P$.
\end{itemize}
Let $w$ be the unique element of $\mathcal{L}_m$ such that $(x,w) \in \mathcal{S}_{m+1}$.
If the sequence $\{(y^k,b^k)\}_{k=1}^{\infty}$ is in $\mathcal{S}_{m+1}$ and converges to $(x,b)$ in $[0,1]^{m+2} \times \mathcal{A}^{m+1}$, then $b = w$. 
\end{lemma}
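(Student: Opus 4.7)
The plan is to prove $b = w$ coordinate by coordinate, working from the right (index $m$) backward to the left (index $0$). A first routine observation: since $\mathcal{A}^{m+1}$ carries the discrete topology, convergence $b^k \to b$ means $b^k = b$ for all sufficiently large $k$, so from now on I assume $(y^k, b) \in \mathcal{S}_{m+1}$ with $y^k \to x$. For each $n$, since $(y^k_n, y^k_{n+1}) \in G_0(b_n)$ and it is elementary to check that $\overline{G_0(a)} = G(a)$ in each of the three types (the only subtle case, $a \in \mathcal{A}_0$, uses continuity of $f_a$ on $D(a)$), passage to the limit gives $(x_n, x_{n+1}) \in G(b_n)$, i.e., $(x,b) \in \mathcal{T}_{m+1}$.

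I first handle the index $n = m$. Because $x_{m+1} \notin P$, the pair $(x_m, x_{m+1})$ lies in $G(F) \setminus (P \times P)$, so Lemma \ref{Lemma:Marvin} applies: the label $b_m$ is uniquely determined and in fact $(x_m, x_{m+1}) \in G_0(b_m)$. Since also $(x_m, x_{m+1}) \in G_0(w_m)$ by definition of $w$, uniqueness gives $b_m = w_m$. A short case analysis on the type of $w_m$ now shows $w_m \in \mathcal{A}_1$: the condition $x_m \in P$ rules out $\mathcal{A}_0$ (which would require $x_m \in D_0(w_m) = (p_i, p_{i+1})$), while $x_{m+1} \notin P$ rules out $\mathcal{A}_2$. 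Consequently $G_0(b_m) = \{x_m\} \times R_0(b_m)$, which forces $y^k_m = x_m$ exactly for every $k$.

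Next, I set up a backward induction: assume that for some $n$ with $0 \leq n \leq m-1$, the equality $y^k_{n+1} = x_{n+1}$ holds for all sufficiently large $k$. For such $k$ the pair $(y^k_n, x_{n+1})$ lies in $G_0(b_n)$ and tends to $(x_n, x_{n+1})$ along the horizontal line $\{y_{n+1} = x_{n+1}\}$. Since $(x_n, x_{n+1}) \in P \times P$, Lemma \ref{Lemma:Lennon} furnishes an open neighborhood $U$ of $(x_n, x_{n+1})$ such that the only point of $\bigcup_a G_0(a)$ of the form $(y, x_{n+1})$ inside $U$ is $(x_n, x_{n+1})$ itself, with label the unique $\alpha \in \mathcal{A}_2$ satisfying $G(\alpha) = \{(x_n, x_{n+1})\}$. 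For large $k$ the point $(y^k_n, x_{n+1})$ lies in $U$, so $y^k_n = x_n$ and $b_n = \alpha$. On the other hand, $(x_n, x_{n+1}) \in P \times P$ together with $(x_n, x_{n+1}) \in G_0(w_n)$ forces $w_n \in \mathcal{A}_2$ by the same type analysis as before, so $w_n = \alpha$ as well. Thus $b_n = w_n$ and the induction hypothesis is restored at level $n$ (since $y^k_n = x_n$ eventually).

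The heart of the argument is the transition from $n = m$ down to $n = m-1$: what makes it work is not merely that $y^k_m \to x_m$ but that $y^k_m = x_m$ exactly, a rigidity extracted from $b_m \in \mathcal{A}_1$. Once that is in hand, the horizontal-line statement of Lemma \ref{Lemma:Lennon} propagates the exact equality $y^k_n = x_n$ all the way down to $n = 0$, and the proof is complete. I do not anticipate any substantive obstacle beyond keeping careful track of the discrete-topology reduction and the case analysis identifying the type of each $w_n$.
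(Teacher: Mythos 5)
Your proof is correct and takes essentially the same route as the paper's: reduce to a fixed label via the discrete topology, handle index $m$ with Lemma~\ref{Lemma:Marvin} to force $b_m = w_m \in \mathcal{A}_1$ and hence $y^k_m = x_m$ exactly, then propagate the exact equality downward via Lemma~\ref{Lemma:Lennon} in a backward induction. The only differences are cosmetic (you spell out the case analyses identifying the types of $w_m$ and $w_n$, which the paper states more tersely).
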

\begin{proof}
By our hypotheses on $x_0,\dots,x_{m+1}$, we have that $w_n \in \mathcal{A}_2$ for $n = 0,\dots,m-1$ and $w_m \in \mathcal{A}_1$. Let us now show that $b_0 \dots b_m = w$.

First, note that since $\mathcal{A}^{m+1}$ has the discrete topology, for all large enough $k$, we have $b^k = b$. Then for all large enough $k$, we have $(y^k_n,y^k_{n+1}) \in G(b_n)$ for all $n = 0,\dots,m$. Since $G(b_n)$ is closed and $\{(y^k_n,y^k_{n+1})\}_{k=1}^{\infty}$ converges to $(x_n,x_{n+1})$, we see that $(x_n,x_{n+1}) \in G(b_n)$ for each $n = 0 ,\dots, m$. 

Since $x_{m+1} \notin P$, Lemma \ref{Lemma:Marvin} gives that there is a unique $a \in \mathcal{A}$ such that $(x_m,x_{m+1}) \in G(a)$, and therefore we must have $b_m = a = w_m$. Furthermore, since $x_{m} \in P$ and $x_{m+1} \notin P$, we see that $w_m \in \mathcal{A}_1$, which implies that $D_0(w_m) = \{x_m\}$. Then since $(y^k_m,y^k_{m+1}) \in G_0(b^k_m) = G_0(b_m) = G_0(w_m)$ for all large enough $k$ and $D_0(w_m) = \{x_m\}$, we see that $y^k_m = x_m$ for all large enough $k$.

We claim by backwards induction that for each $j = 0,\dots,m$, we have $b_j = w_j$ and $y^k_j = x_j$ for all large enough $k$. We have established the base case ($j = m$) in the preceding paragraph. Now suppose it holds for some $j+1$. Let $U$ be given by Lemma \ref{Lemma:Lennon} for the point $(x_j,x_{j+1})$. By the inductive hypothesis, for all large enough $k$, we have $y^k_{j+1} = x_{j+1} \in P$. Also, for all large enough $n$, we must have $(y^k_j,y^k_{j+1}) \in U$ (since $\{(y^k_j,y^k_{j+1})\}_{k=1}^{\infty}$ converges to $(x_j,x_{j+1})$). Then for all large enough $k$, we have $(y^k_j, x_{j+1}) \in U \cap G_0(b_j)$. By our choice of $U$, we must have that $b_j = w_j$ and $y^k_j = x_j$ for all large enough $k$, which completes the induction.
\end{proof}

\begin{lemma} \label{Lemma:McCartney}
Let $m \geq 1$.
Suppose that $x = x_0,\dots,x_{m+1} \in [0,1]^{m+2}$ is a finite trajectory of $F$ such that  $x_{m+1} \in [0,1] \setminus P$.
Let $w$ be the unique element of $\mathcal{L}_m$ such that $(x,w) \in \mathcal{S}_{m+1}$.
If the sequence $\{(y^k,b^k)\}_{k=1}^{\infty}$ is in $\mathcal{S}_{m+1}$ and converges to $(x,b)$ in $[0,1]^{m+2} \times \mathcal{A}^{m+1}$, then $b = w$. 
\end{lemma}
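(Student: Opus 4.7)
The plan is to prove $b_n = w_n$ for each $n \in \{0,\dots,m\}$ by backward induction on $n$. The guiding principle is that whenever $(x_n,x_{n+1}) \notin P \times P$, Lemma \ref{Lemma:Marvin} immediately forces $b_n = w_n$: one passes to the limit in $(y^k_n,y^k_{n+1}) \in G(b^k_n) = G(b_n)$, which holds for large $k$ by eventual constancy of $b^k$ in the discrete topology on $\mathcal{A}^{m+1}$, and uses that $G(b_n)$ is closed. The delicate case — not present in Lemma \ref{Lemma:Sam} — is when $x_n$ and $x_{n+1}$ both lie in $P$ while some later coordinate does not; here I would bootstrap the equality $y^k_{n+1} = x_{n+1}$ and then apply Lemma \ref{Lemma:Lennon}.

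The base case $n = m$ is immediate from Lemma \ref{Lemma:Marvin} because $x_{m+1} \notin P$ by hypothesis. For the inductive step, suppose $b_j = w_j$ for $n < j \leq m$. If $x_{n+1} \notin P$, apply Marvin directly to $(x_n,x_{n+1})$. Suppose instead $x_{n+1} \in P$. The key subclaim is that $y^k_{n+1} = x_{n+1}$ for all sufficiently large $k$. Indeed, $(x_{n+1},x_{n+2}) \in G_0(w_{n+1})$ forces $x_{n+1} \in D_0(w_{n+1})$; since $D_0(a)$ is an open partition interval and hence disjoint from $P$ for every $a \in \mathcal{A}_0$, we must have $w_{n+1} \in \mathcal{A}_1 \cup \mathcal{A}_2$, so $D_0(w_{n+1}) = \{x_{n+1}\}$ is a singleton. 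Combining the inductive hypothesis $b_{n+1} = w_{n+1}$ with $(y^k_{n+1},y^k_{n+2}) \in G_0(w_{n+1})$ for large $k$ pins $y^k_{n+1}$ to $x_{n+1}$.

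With $y^k_{n+1} = x_{n+1}$ in hand, I would split on $x_n$. If $x_n \notin P$, Marvin again closes the argument. If $x_n \in P$, then $(x_n,x_{n+1}) \in G(F) \cap (P \times P)$, and Lemma \ref{Lemma:Lennon} furnishes an open neighborhood $U$ of $(x_n,x_{n+1})$ in which the only way to have $(y^k_n,x_{n+1}) \in G_0(b_n) \cap U$ for large $k$ is for $b_n$ to be the unique $a \in \mathcal{A}_2$ with $G(a) = \{(x_n,x_{n+1})\}$. A parallel case analysis for $w_n$ — using $D_0(a) \cap P = \varnothing$ for $a \in \mathcal{A}_0$ and $R_0(a) \cap P = \varnothing$ for $a \in \mathcal{A}_1$ (where the definition stipulates $R(a) \cap P = \{u,v\}$) — rules out $w_n \in \mathcal{A}_0 \cup \mathcal{A}_1$, so $w_n \in \mathcal{A}_2$ with $G(w_n) = \{(x_n,x_{n+1})\}$ and by uniqueness $b_n = w_n$.

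The main obstacle I expect is exactly the sub-case $x_n, x_{n+1} \in P$ within the inductive step: neither Marvin nor a direct invocation of Lennon is available a priori, and one must first propagate equality one coordinate forward via the partition structure (forcing $w_{n+1} \in \mathcal{A}_1 \cup \mathcal{A}_2$, so $D_0(w_{n+1})$ is a singleton) in order to unlock Lennon. This effectively reduces McCartney to a variant of Sam's argument in which the ``runway'' of partition values may begin at any index rather than being required to occupy an initial segment of the trajectory.
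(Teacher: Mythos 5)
Your proof is correct, and it amounts to a self-contained backward-induction variant of the paper's argument. The paper's own proof of this lemma is not itself an induction: it first passes to the limit to get $(x_n,x_{n+1}) \in G(b_n)$ for each $n$, handles indices with $(x_n,x_{n+1}) \notin P \times P$ via Lemma~\ref{Lemma:Marvin}, and for indices with $(x_n,x_{n+1}) \in P \times P$ locates the maximal run $x_n,\dots,x_N \in P$, $x_{N+1} \notin P$ and hands that segment off to Lemma~\ref{Lemma:Sam}, whose proof is the backward induction you have essentially reproduced inline. The main points of divergence are cosmetic but worth noting: you run a single induction over the full range $n = m, m-1, \dots, 0$ rather than re-using Sam on each run of $P$-values, and you obtain the key fact $y^k_{n+1} = x_{n+1}$ directly from the inductive hypothesis $b_{n+1} = w_{n+1}$ together with the observation that $x_{n+1} \in P$ forces $w_{n+1} \in \mathcal{A}_1 \cup \mathcal{A}_2$ (so $D_0(w_{n+1})$ is a singleton), whereas Sam's proof carries along the equalities $y^k_j = x_j$ as part of the inductive invariant. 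Your explicit verification that $w_n \in \mathcal{A}_2$ when $x_n, x_{n+1} \in P$ (by ruling out $\mathcal{A}_0$ via $D_0(a) \cap P = \varnothing$ and $\mathcal{A}_1$ via $R_0(a) \cap P = \varnothing$) is an appeal to facts that the paper's proof of Sam leaves implicit, and is a nice bit of bookkeeping. What the paper's modular route buys is economy — Sam is proved once and invoked once per $P$-run — while your unified induction buys self-containment and a slightly cleaner invariant; either is a perfectly acceptable way to organize the argument.
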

\begin{proof}
As $\mathcal{A}^{m+1}$ has the discrete topology, we must have that $b^k = b$ for all large enough $k$. Then for all large enough $k$, we have $(y^k_n,y^k_{n+1}) \in G(b_n)$, which is closed, and therefore $(x_n,x_{n+1}) \in G(b_n)$.

Observe that if $(x_n,x_{n+1}) \in G(F) \setminus (P \times P)$, then $b_n = w_n$ by Lemma \ref{Lemma:Marvin}. Now suppose that we have some $n$ such that $(x_n,x_{n+1}) \in P \times P$. Then there exists $N \in [n+1, \, m]$ such that $x_j \in P$ for all $j = n, \dots, N$ and $x_{N+1} \notin P$. Thus $x_n, \dots, x_{N+1}$ satisfies the conditions of Lemma \ref{Lemma:Sam}, and we conclude that $b_n = w_n$.
\end{proof}

%

\section{Construction of the joint system and factor maps}

Let $F$ be a Markov multi-map in $\mathcal{F}$ with associated trajectory space $X$ and SFT $\Sigma_M$. In the following section, we introduce a topological dynamical system by taking limits of special finite labeled trajectories. We call this system the \textit{joint system}. Then in Sections \ref{Sect:FactoringOntoSigma} and \ref{Sect:FactoringOntoX}, we show that the joint system is in fact a common extension of $X$ and $\Sigma_M$. The joint system and its factor maps onto $X$ and $\Sigma_M$ are central to the construction of the Borel entropy conjugacy in our proof of Theorem \ref{Thm:EntropyConj}. We establish their key properties in this section. 

\subsection{The joint system}

Let $F$ be a Markov multi-map in $\mathcal{F}$ with associated SFT $\Sigma_M$. Here we define a subset $V$ of the product space $[0,1]^{\mathbb{Z}_+} \times \Sigma_M$, which will serve as a common extension of the trajectory space $X$ and the SFT $\Sigma_M$. 

\begin{defn} \label{Defn:V}
Let $F$ be in  $\mathcal{F}$ with associated trajectory space $X$ and SFT $\Sigma_M$. Define a set $V = V(F) \subset [0,1]^{\mathbb{Z}_+} \times \Sigma_M$ as follows.
A pair $(x,\mathbf{a}) \in [0,1]^{\mathbb{Z}_+} \times \Sigma_M$ is in $V$ if there exists a sequence $\{\ell_k\}_{k=1}^{\infty}$ of natural numbers tending to infinity and a sequence $\{(y^k,a^k)\}_{k=1}^{\infty}$ of special finite labeled trajectories, with $(y^k,a^k) \in \mathcal{S}_{\ell_k}$, such that for each $n \geq 0$, the sequence $\{(y^k_n,a^k_n)\}_{k=1}^{\infty}$ converges to $(x_n,a_n)$ in $[0,1] \times \mathcal{A}$.
%
\end{defn}

\begin{prop}
$V$ is closed and invariant under the left shift.
\end{prop}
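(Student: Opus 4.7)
The plan is to verify both assertions directly from Definition \ref{Defn:V}, exploiting the fact that the topology on $[0,1]^{\mathbb{Z}_+} \times \Sigma_M$ is the product topology. Convergence in this topology is coordinatewise; in the $\Sigma_M$ factor, since $\mathcal{A}$ carries the discrete topology, it means each symbol is eventually constant. Both assertions then unfold from the definition with no serious dynamical input --- no appeal to the Markov structure, the no-crossing property, or the graph lemmas is needed.

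Shift invariance should be the easier part. I would suppose $(x,\mathbf{a}) \in V$ is witnessed by $(y^k,a^k) \in \mathcal{S}_{\ell_k}$ with $\ell_k \to \infty$, and simply truncate: set $\tilde y^k_n = y^k_{n+1}$ and $\tilde a^k_n = a^k_{n+1}$. A direct check against the definition of a special finite labeled trajectory shows $(\tilde y^k, \tilde a^k) \in \mathcal{S}_{\ell_k - 1}$, and the lengths $\ell_k - 1$ still tend to infinity. Coordinatewise convergence to $(\sigma_X x, \sigma_M \mathbf{a})$ is immediate, so the shifted point lies in $V$.

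For closedness I plan to use a standard diagonal extraction. Given $(x^{(j)}, \mathbf{a}^{(j)}) \in V$ converging to $(x,\mathbf{a})$, and approximating sequences $\{(y^{j,k}, a^{j,k})\}_k$ in $\mathcal{S}_{\ell_{j,k}}$ for each $j$, I would choose $k_j$ so large that $\ell_{j,k_j} \geq j+1$, $|y^{j,k_j}_n - x^{(j)}_n| < 1/j$ for all $n \leq j$, and $a^{j,k_j}_n = a^{(j)}_n$ for all $n \leq j$ (the last being legitimate because $\mathcal{A}$ is discrete, so coordinatewise convergence of labels becomes eventual equality). Setting $(z^j, c^j) = (y^{j,k_j}, a^{j,k_j}) \in \mathcal{S}_{\ell_{j,k_j}}$, the triangle inequality together with $x^{(j)}_n \to x_n$ yields $z^j_n \to x_n$ for each fixed $n$, while $c^j_n = a^{(j)}_n \to a_n$ for all large $j$. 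Thus $\{(z^j, c^j)\}$ witnesses $(x,\mathbf{a}) \in V$.

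The only subtlety I anticipate is the bookkeeping between the length-$m+1$ tuple of points and the length-$m$ word of labels in $\mathcal{S}_m$, and making sure the shifted and diagonally-extracted objects still lie in some $\mathcal{S}_m$ with $m \to \infty$; these are indexing matters, not mathematical obstacles. Since nothing in the argument uses any property of $F$ beyond the definition of $V$, I do not expect a genuine hard step here.
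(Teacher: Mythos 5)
Your proposal is correct and follows essentially the same route as the paper: a truncation argument for shift invariance and a diagonal extraction for closedness, using that $\mathcal{A}$ is discrete so label convergence is eventual equality. The only cosmetic difference is that you take $m_j = j$ directly (the $j$-th outer approximant) rather than separately choosing an outer index $m_j$, which works equally well since the outer sequence already converges coordinatewise.
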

\begin{proof}
Suppose that $(x^m,\mathbf{a}^m)$ is a sequence in $V$ that converges in $[0,1]^{\mathbb{Z}_+} \times \Sigma_M$ to $(x,\mathbf{a})$. 
For each $m$, we have that $(x^m,\mathbf{a}^m) \in V$, and therefore there exists a sequence of natural numbers $\{\ell(m,k)\}_{k=1}^{\infty}$ tending to infinity and a sequence of special finite labeled trajectories $(y^{m,k},b^{m,k}) \in \mathcal{S}_{\ell(m,k)}$ such that for each $m$ and $n$,
\[
\lim_{k} b^{m,k}_n = a^m_n, \quad \text{and} \quad \lim_{k} y^{m,k}_n = x^m_n.
\]
To complete the proof, we exhibit a sequence $\{\ell_j\}_{j=1}^{\infty}$ of natural numbers and a sequence $\{(z^j,c^j)\}_{j=1}^{\infty}$ of special finite labeled trajectories to demonstrate that $(x,\mathbf{a}) \in V$.

Let $j \geq 1$. First choose $m_j$ such that for all $n = 0,\dots,j$, we have $a^{m_j}_n = a_n$ and
\[
\bigl|x_n^{m_j} - x_n \bigr| < \frac{1}{2j}.
\]
Next choose $k_j$ (depending on $m_j$) such that $\ell(m_j,k_j) \geq j$ and for all $n = 0,\dots,j$, we have $b^{m_j,k_j}_n = a^{m_j}_n$ and
\[
\bigl|y^{m_j,k_j}_n - x^{m_j}_n \bigr| < \frac{1}{2j}.
\]
Finally, let $\ell_j = j$, and define 
\[
z^j = y^{m_j,k_j}_0,\dots,y_j^{m_j,k_j}
\]
and
\[
c^j = b^{m_j,k_j}_0 \dots b^{m_j,k_j}_j.
\]
%
Then $\{\ell_j\}_{j=1}^{\infty}$ tends to infinity and $\{(z^j,c^j)\}_{j=1}^{\infty}$ is a sequence of special finite labeled trajectories. Furthermore, for each $n$, we have that $\{(z^j_n,c^j_n)\}_{j=1}^{\infty}$ converges to $(x_n,a_n)$ in $[0,1] \times \mathcal{A}$.
We have thus exhibited the necessary sequences to establish that $(x,\mathbf{a}) \in V$.

To establish shift invariance, suppose that $(x,\mathbf{a}) \in V$. Let us show that $(\sigma(x),\sigma(\mathbf{a})) \in V$. There exists natural numbers $\{\ell_k\}_{k=1}^{\infty}$ and special finite trajectories $\{(y^k, a^k)\}_{k=1}^{\infty}$ that witness the fact that $(x,\mathbf{a}) \in V$. Define $z^k_n = y^k_{n+1}$ and $b^k_n = a^k_{n+1}$. Then the sequences $\{\ell_k-1\}_{k=1}^{\infty}$ and $\{(z^k,b^k)\}_{k=1}^{\infty}$ establish that $(\sigma(x),\sigma(\mathbf{a})) \in V$.
\end{proof}

As $V$ is invariant under the left shift, we define $\sigma_V : V \to V$ by letting $\sigma_V(x,\mathbf{a}) = (\sigma(x),\sigma(\mathbf{a}))$. 

In the proof of our main results, we use the joint space $V$ as an intermediary between the spaces $X$ and $\Sigma_M$. To make this connection precise, we define factor maps from $V$ onto each of $X$ and $\Sigma_M$.

\subsection{Factoring onto $\Sigma_M$} \label{Sect:FactoringOntoSigma}

Here we show that the joint space $V$ from Definition \ref{Defn:V} factors onto $\Sigma_M$.

\begin{defn} \label{Defn:Phi}
Let $F$ be in  $\mathcal{F}$ with associated trajectory space $X$, SFT $\Sigma_M$, and joint space $V$. Define the map $\phi : V \to \Sigma_M$ by the rule $\phi(x,\mathbf{a}) = \mathbf{a}$. 
\end{defn}

It is clear that $\phi$ is continuous and commutes with the left shift.


\begin{rmk} \label{Rmk:Rain}
Note that if $M(a,b) = 1$, then $f_a^{-1}(D_0(b)) \subset D_0(a)$. Furthermore, if $y \in D_0(b)$ and $x = f^{-1}_a(y)$, then $(x,y) \in G_0(a)$. Thus, if $a_0 \dots a_{\ell} \in \mathcal{L}_{\ell}$ and $y_{\ell} \in D_0(a_{\ell})$, then for each $n = 0, \dots, \ell-1$, we have
\[
y_n = f_{a_n}^{-1} \circ \dots \circ f_{a_{\ell-1}}^{-1}(y_{\ell}) \in D_0(a_n),
\]
and $(y_n,y_{n+1}) \in G_0(a_n)$. 
\end{rmk}

\begin{prop}
$\phi$ is surjective.
\end{prop}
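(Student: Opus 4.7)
The plan is to construct, for each $\mathbf{a} \in \Sigma_M$, a sequence of special finite labeled trajectories whose label coordinates agree with $\mathbf{a}$ on progressively longer initial segments, and then extract a convergent subsequence using compactness of $[0,1]^{\mathbb{Z}_+}$.

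Fix $\mathbf{a} = (a_n)_{n \geq 0} \in \Sigma_M$. For each $k \geq 1$, I would note first that $D_0(a_k)$ is nonempty in all three cases: if $a_k \in \mathcal{A}_0$, then $D_0(a_k)$ is an open interval $(p_i,p_{i+1})$ with $p_i < p_{i+1}$, and if $a_k \in \mathcal{A}_1 \cup \mathcal{A}_2$, then $D_0(a_k)$ is a singleton. Pick any $y^k_k \in D_0(a_k)$. Since $a_0 a_1 \dots a_k \in \mathcal{L}_k$, Remark~\ref{Rmk:Rain} (applied with $\ell = k$) produces the points
\[
y^k_n = f_{a_n}^{-1} \circ \dots \circ f_{a_{k-1}}^{-1}(y^k_k) \in D_0(a_n), \quad n = 0, \dots, k-1,
\]
satisfying $(y^k_n,y^k_{n+1}) \in G_0(a_n)$ for $n = 0, \dots, k-1$. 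Setting $b^k = a_0 a_1 \dots a_{k-1}$ and $y^k = y^k_0 y^k_1 \dots y^k_k$, we obtain $(y^k, b^k) \in \mathcal{S}_k$.

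Next I would extend each $y^k$ to an element of $[0,1]^{\mathbb{Z}_+}$ (say by appending zeros). By compactness of $[0,1]^{\mathbb{Z}_+}$ in the product topology (Tychonoff), there exists a subsequence $\{k_j\}$ and some $x \in [0,1]^{\mathbb{Z}_+}$ with $y^{k_j} \to x$ in the product topology; equivalently, $y^{k_j}_n \to x_n$ for every $n \geq 0$. Setting $\ell_j = k_j$, the sequence $\{\ell_j\}$ tends to infinity. Moreover, for each fixed $n \geq 0$ and every $k > n$, we have $b^k_n = a_n$ by construction, so in the discrete topology on $\mathcal{A}$ the sequence $b^{k_j}_n$ is eventually equal to $a_n$. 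Thus for each $n$, the sequence $(y^{k_j}_n, b^{k_j}_n)$ converges to $(x_n, a_n)$ in $[0,1] \times \mathcal{A}$, which by Definition~\ref{Defn:V} shows $(x, \mathbf{a}) \in V$. Since $\phi(x,\mathbf{a}) = \mathbf{a}$, surjectivity of $\phi$ follows.

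There is no real obstacle here; the proof is essentially a compactness argument combined with the explicit backward construction from Remark~\ref{Rmk:Rain}. The only point requiring mild care is verifying that $D_0(a_k)$ is nonempty for every symbol type and that the Remark genuinely produces a point of $\mathcal{S}_k$ (i.e., that the inclusions $(y^k_n, y^k_{n+1}) \in G_0(a_n)$ hold for every index in the required range, which is exactly the content of the Remark).
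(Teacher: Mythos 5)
Your proof is correct and takes essentially the same approach as the paper: both construct $y^\ell_n$ by choosing an arbitrary point of $D_0(a_\ell)$ and pulling it back via $f_{a_n}^{-1}\circ\cdots\circ f_{a_{\ell-1}}^{-1}$ (Remark~\ref{Rmk:Rain}), then extract a coordinatewise-convergent subsequence. The only difference is cosmetic — you invoke Tychonoff compactness of $[0,1]^{\mathbb{Z}_+}$ after padding with zeros, while the paper runs a direct diagonal argument; these are the same argument in substance.
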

\begin{proof}
Let $\mathbf{a} \in \Sigma_m$. 
For each $\ell \geq 1$, let $y^{\ell}_{\ell} \in D_0(a_{\ell})$ be arbitrary. For $n = 0, \dots, \ell-1$, let $y^{\ell}_n = f_{a_n}^{-1} \circ \dots \circ f_{a_{\ell-1}}^{-1}(y^{\ell}_{\ell})$. By Remark \ref{Rmk:Rain}, for each $n = 0, \dots, \ell-1$, we have $(y^{\ell}_n,y^{\ell}_{n+1}) \in G_0(a_n)$.  Also, for each $n$, we have that $y^{\ell}_{n} \in [0,1]$, which is sequentially compact. Thus, by a diagonal argument, there exists a subsequence $\{\ell_k\}_{k=1}^{\infty}$ tending to infinity such that for each $n$, there exists $x_n \in [0,1]$ such that
\[
\lim_{k \to \infty} y^{\ell_k}_n = x_n.
\]
Setting $x = (x_n)_{n=0}^{\infty}$, we see that $(x,\mathbf{a}) \in V$ and $\phi(x,\mathbf{a}) = \mathbf{a}$.
\end{proof}


The following proposition asserts that $\phi$ preserves the entropy of ergodic measures. Its proof is an adaptation of the proof of \cite[Theorem 4.1]{AlvinKelly2019}, and we provide it in Appendix \ref{Sect:PhiPreservesEntropy} for completeness.

\begin{prop} \label{Prop:PhiPreservesEntropy}
Let $\phi : V \to \Sigma_M$ be as in Definition \ref{Defn:Phi}. Furthermore, let $\mu \in \mathcal{M}_e(V,\sigma_V)$ and $\nu \in \mathcal{M}_e(\Sigma_M,\sigma_M)$ be such that $\nu = \mu \circ \phi^{-1}$. Then $h(\nu) = h(\mu)$. 
\end{prop}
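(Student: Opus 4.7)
The easy direction, $h(\nu) \le h(\mu)$, is the standard factor map inequality, since $\phi$ is a continuous, shift-equivariant surjection. The content of the proposition is the reverse inequality, which I would establish by showing that $\phi$ is injective on a set of full $\mu$-measure; such essential injectivity of a Borel factor map between invertible/noninvertible measure-preserving systems forces $h(\mu)=h(\nu)$ (e.g.\ via the Abramov--Rohlin relative entropy formula, after checking that the conditional entropy $h_\mu(\sigma_V\mid \phi^{-1}\mathcal{B}_{\Sigma_M})$ vanishes).

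The structural input is the following observation, which I would prove first: for every $(x,\mathbf{a})\in V$ and every $n\ge 0$, one has $x_n\in I_{\sigma^n(\mathbf{a})}$. By shift-invariance of $V$ it is enough to treat $n=0$. Fix $N\ge 1$, and let $(y^k,a^k)\in\mathcal{S}_{\ell_k}$ be the approximating special labeled trajectories from Definition~\ref{Defn:V}. For all sufficiently large $k$, we have $\ell_k\ge N$ and $a^k_j=a_j$ for $j=0,\dots,N$; since $(y^k,a^k)$ is a special finite labeled trajectory, iterated application of the relation $y^k_j=f_{a_j}^{-1}(y^k_{j+1})$ shows that $y^k_0\in I_{a_0\cdots a_N}$. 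Sending $k\to\infty$ and using that $I_{a_0\cdots a_N}$ is closed gives $x_0\in I_{a_0\cdots a_N}$, and letting $N\to\infty$ yields $x_0\in I_{\mathbf{a}}$. Consequently, whenever $I_{\sigma^n(\mathbf{a})}$ is a singleton for every $n$, the point $x$ is uniquely determined by $\mathbf{a}$, and $\phi$ is injective over such $\mathbf{a}$.

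To upgrade this to a $\nu$-almost everywhere statement, I would use ergodicity together with the defining properties of the class $\mathcal{F}$. Since $\nu$ is ergodic on $\Sigma_M$, it is supported on a single irreducible component $\Sigma_M(\mathcal{C})$. When $\mathcal{C}$ has positive entropy, Definition~\ref{Defn:ClassF} provides a coding word $u\in\mathcal{L}(\mathcal{C})$ such that any $\mathbf{a}\in\Sigma_M(\mathcal{C})$ visiting $[u]$ infinitely often has $I_{\mathbf{a}}$ a singleton. Assuming $\nu([u])>0$, the pointwise ergodic theorem gives infinitely many returns of $\nu$-a.e.\ $\mathbf{a}$ to $[u]$, so the set $S=\{\mathbf{a}:I_{\mathbf{a}}\text{ is a singleton}\}$ satisfies $\nu(S)=1$. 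Shift-invariance of $\nu$ then implies $\nu(\bigcap_{n\ge 0}\sigma^{-n}S)=1$, so $I_{\sigma^n(\mathbf{a})}$ is a singleton for every $n$, $\nu$-a.e. This delivers essential injectivity of $\phi$ mod $\mu$ and hence $h(\mu)=h(\nu)$.

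The main obstacle is the handling of two borderline cases. First, an ergodic $\nu$ on a positive-entropy component $\mathcal{C}$ could \emph{a priori} assign zero mass to the coding cylinder $[u]$; but then $\nu$ is supported on a proper closed shift-invariant subset of $\Sigma_M(\mathcal{C})$, and entropy minimality of irreducible SFTs (cited in the Background) forces $h(\nu)<\htop(\Sigma_M(\mathcal{C}))$, allowing one either to iterate the argument on the smaller support or to deduce the desired equality by strict comparison. Second, when $\mathcal{C}$ has zero entropy (hence is a finite periodic cycle), one has $h(\nu)=0$, and one verifies directly that $h(\mu)=0$: over a periodic label sequence the fiber dynamics is an iterated composition of monotone interval homeomorphisms or constants, which carries no positive-entropy invariant measure. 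Together these exhaust all ergodic $\nu$, completing the proof of $h(\mu)=h(\nu)$.
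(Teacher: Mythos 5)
Your route differs from the paper's, which uses Katok's Bowen-ball characterization of measure entropy together with a cardinality bound (imported from the proof of Theorem 4.1 in the Alvin--Kelly reference) showing that $\phi^{-1}([w])$ is covered by at most $(n+1)\bigl(\lceil 1/\epsilon\rceil+1\bigr)$ many $(n,\epsilon)$-balls for each $w\in\mathcal{L}_n$; this yields $h(\mu)\le h(\nu)$ for every ergodic $\mu$ with no case analysis. Your structural observation $x_n\in I_{\sigma^n(\mathbf{a})}$ is correct, and indeed the paper uses the same fact in its bad-set constructions. However, the plan to deduce $h(\mu)\le h(\nu)$ from essential injectivity of $\phi$ has a genuine gap: the proposition must hold for \emph{every} ergodic $\mu\in\mathcal{M}_e(V,\sigma_V)$, and there are ergodic $\mu$ for which $\phi$ is not injective modulo any full-measure set. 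On a Type I component $\mathcal{C}\subset\mathcal{A}_0$, the event $\{I_{\mathbf{a}}\text{ nondegenerate}\}$ is shift-invariant (since $I_{\mathbf{a}}=f_{a_0}^{-1}(I_{\sigma(\mathbf{a})})$ with $f_{a_0}^{-1}$ a homeomorphism), so an ergodic $\nu$ can give it full measure; for such $\nu$ each fiber $\phi^{-1}(\mathbf{a})$ contains a point for every interior $x_0\in I_{\mathbf{a}}$, hence continuum many points, and essential injectivity is simply false. In that regime some direct control on relative entropy or on fiber complexity, as the paper's counting estimate provides, is unavoidable.

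Your proposed patches for the two borderline cases do not close this gap. When $\nu([u])=0$ on a positive-entropy component, entropy minimality gives $h(\nu)<\htop(\Sigma_M(\mathcal{C}))$, but this compares $h(\nu)$ to the ambient topological entropy and gives no information about $h(\mu)$. Iterating on the smaller support also fails: the proper subshift carrying $\nu$ need not be irreducible and need not admit a coding word, so the injectivity machinery has no foothold there. The zero-entropy case is more plausible but under-argued: the fiber over a periodic $\mathbf{a}$ is the set of \emph{limits} of special finite labeled trajectories, and showing that the return dynamics on this fiber is a monotone interval map (hence carries no positive-entropy measure) requires tracking the $\mathcal{A}_1$-pinning constraints, which the sketch omits.
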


\subsection{Factoring onto $X$} \label{Sect:FactoringOntoX}

Here we show that the joint space $V$ from Definition \ref{Defn:V} factors onto $X$.

\begin{defn} \label{Defn:Pi}
Let $F$ be in  $\mathcal{F}$ with associated trajectory space $X$, SFT $\Sigma_M$, and joint space $V$.
Define the map $\pi : V \to [0,1]^{\mathbb{Z}_+}$ by the rule $\pi(x,\mathbf{a}) = x$.
\end{defn}

It is clear that $\pi$ is continuous and commutes with the left shift. The following result shows that the image of $\pi$ is contained in $X$.

\begin{prop} \label{Lemma:PiWellDef}
Suppose that $(x,\mathbf{a}) \in V$. Then $x \in X$. 
\end{prop}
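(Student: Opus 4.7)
The plan is to unpack the definition of $V$ and use a direct closure argument to show that each consecutive pair $(x_n, x_{n+1})$ lies in the graph $G(F)$, which is exactly what it means for $x$ to be a trajectory.

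More precisely, suppose $(x,\mathbf{a}) \in V$. By Definition \ref{Defn:V}, there exist natural numbers $\ell_k \to \infty$ and special finite labeled trajectories $(y^k,a^k) \in \mathcal{S}_{\ell_k}$ such that for every $n \geq 0$, $(y^k_n,a^k_n) \to (x_n,a_n)$ in $[0,1] \times \mathcal{A}$ as $k \to \infty$. Fix an arbitrary $n \geq 0$. Since $\ell_k \to \infty$, for all sufficiently large $k$ we have $\ell_k \geq n+1$, so both $y^k_n$ and $y^k_{n+1}$ are defined and, because $(y^k,a^k) \in \mathcal{S}_{\ell_k}$, we have
\[
(y^k_n, y^k_{n+1}) \in G_0(a^k_n) \subset G(a^k_n) \subset G(F).
\]

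Now I would use that $G(F)$ is closed in $[0,1]^2$ (noted in Section \ref{Sect:MarkovMultiMaps}, since $G(F) = \bigcup_{a \in \mathcal{A}} G(a)$ is a finite union of closed sets). The pair $(y^k_n, y^k_{n+1})$ converges to $(x_n, x_{n+1})$ as $k \to \infty$, and hence the limit point $(x_n,x_{n+1})$ also lies in $G(F)$. Equivalently, $x_{n+1} \in F(x_n)$. Since $n$ was arbitrary, this shows $x = (x_n)_{n=0}^\infty$ is a trajectory of $F$, i.e., $x \in X$.

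There is really no obstacle here: we do not need to keep track of the labels $a^k_n$ at all in this argument, only the underlying trajectory coordinates, and the closedness of $G(F)$ does all the work. The only mild subtlety to check is that for each fixed $n$ we can indeed find $k$ large enough that the $(n{+}1)$-st coordinate of $(y^k,a^k)$ is defined, which is guaranteed by $\ell_k \to \infty$.
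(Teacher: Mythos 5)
Your proof is correct and takes essentially the same approach as the paper: a direct closure argument applied to the approximating special finite labeled trajectories. The only (immaterial) difference is that the paper uses discrete-topology convergence of the labels to conclude $(x_n,x_{n+1}) \in G(a_n)$ via closedness of $G(a_n)$, whereas you discard the labels and invoke closedness of the full graph $G(F)$; either suffices for the stated conclusion.
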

\begin{proof}
Let $n \geq 0$. Since $(x,\mathbf{a}) \in V$, there exists $y^k_n$ and $y^k_{n+1}$ such that $\lim_k y^k_n = x_n$, $\lim_k y^k_{n+1} = x_{n+1}$, and $(y^k_n,y^k_{n+1}) \in G(a_n)$. Since $G(a_n)$ is closed, we see that $(x_n,x_{n+1}) \in G(a_n)$ for each $n \geq 0$. Then by the definition of $X$, we have $x \in X$.
\end{proof}

By Proposition~\ref{Lemma:PiWellDef}, we have $\pi : V \to X$. Next we establish that $\pi$ in fact maps \textit{onto} $X$. First, 
%
let $V_0 \subset V$ be the set of points $(x,\mathbf{a}) \in V$ such that for each $n \geq 0$, we have $(x_n,x_{n+1}) \in G_0(a_n)$.

\begin{prop} \label{Lemma:Surjective}
For each $x \in X$, there exists a unique $\mathbf{a} \in \Sigma_M$ such that $(x,\mathbf{a}) \in V_0$. In particular, $\pi : V \to X$ is surjective.
\end{prop}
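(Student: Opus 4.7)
The plan is to first construct the candidate sequence $\mathbf{a}$ pointwise from $x$, then verify in turn that (i) it is admissible for $M$, (ii) it lies in $V$, and (iii) it is unique. Because $F$ is properly parametrized, the sets $\{G_0(a) : a \in \mathcal{A}\}$ partition $G(F)$. So for each $n \geq 0$, the pair $(x_n,x_{n+1}) \in G(F)$ belongs to a unique $G_0(a_n)$, which defines $a_n \in \mathcal{A}$ uniquely. Setting $\mathbf{a} = (a_n)_{n \geq 0}$ immediately gives $(x_n,x_{n+1}) \in G_0(a_n)$ for every $n$, which is exactly the condition defining $V_0$ (once we know $(x,\mathbf{a}) \in V$).

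Next I would check that $\mathbf{a} \in \Sigma_M$, i.e., $M(a_n,a_{n+1}) = 1$ for all $n$. From $(x_n,x_{n+1}) \in G_0(a_n)$ one reads off $x_{n+1} \in R_0(a_n)$, and from $(x_{n+1},x_{n+2}) \in G_0(a_{n+1})$ one reads off $x_{n+1} \in D_0(a_{n+1})$ (in each of the cases $a_{n+1} \in \mathcal{A}_0, \mathcal{A}_1, \mathcal{A}_2$, the point $x_{n+1}$ lies in $D_0(a_{n+1})$ by the explicit description of $G_0$). Hence $D_0(a_{n+1}) \cap R_0(a_n) \neq \varnothing$, and the graph Markov property recalled in the Remark after Definition~\ref{Defn:ProperParametrization} forces $D_0(a_{n+1}) \subset R_0(a_n)$, i.e.\ $M(a_n,a_{n+1})=1$.

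To show $(x,\mathbf{a}) \in V$, the simplest witness is the trivial one: for each $k \geq 1$ set $\ell_k = k$, $y^k = (x_0,\dots,x_k)$, $b^k = a_0 \dots a_{k-1}$. Since $(x_n,x_{n+1}) \in G_0(a_n)$ for every $n$, the pair $(y^k,b^k)$ is a special finite labeled trajectory in $\mathcal{S}_k$, and for each fixed $n$ we have $(y^k_n,b^k_n) = (x_n,a_n)$ for all $k \geq n+1$, giving the required coordinatewise convergence in $[0,1] \times \mathcal{A}$. Combined with the first step, this yields $(x,\mathbf{a}) \in V_0$, so $\pi$ is surjective. Uniqueness of $\mathbf{a}$ in $V_0$ follows from the same observation used to construct it: if $(x,\mathbf{a}), (x,\mathbf{a}') \in V_0$ then for each $n$ the point $(x_n,x_{n+1})$ lies in both $G_0(a_n)$ and $G_0(a'_n)$, and since the $\{G_0(a)\}$ partition $G(F)$ we conclude $a_n = a'_n$.

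I do not expect serious obstacles: everything is bookkeeping given proper parametrization (which supplies both existence and uniqueness of the label for each edge) and the graph Markov property (which supplies admissibility in $\Sigma_M$). The only mild care is in verifying $x_{n+1} \in D_0(a_{n+1})$ in the three cases $\mathcal{A}_0,\mathcal{A}_1,\mathcal{A}_2$, which is where the explicit description of $G_0$ from Section~\ref{Sect:MarkovMultiMaps} gets used.
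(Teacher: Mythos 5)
Your proof is correct and follows essentially the same route as the paper's: construct $\mathbf{a}$ pointwise via the proper parametrization, verify admissibility using the graph Markov property, and exhibit the trivial constant witness sequence to place $(x,\mathbf{a})$ in $V$. The paper states uniqueness slightly more implicitly (as a consequence of the uniqueness of each $a_n$ in the construction), but your explicit check of uniqueness at the end is the same observation.
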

\begin{proof}
Let $x = (x_n)_{n=0}^{\infty} \in X$. 
Let $n \geq 0$. Since $(x_n, x_{n+1}) \in G(F)$ and $\{G_0(a) : a \in \mathcal{A}\}$ is a partition of $G(F)$, there is a unique element $a_n \in \mathcal{A}$ such that $(x_n,x_{n+1}) \in G_0(a_n)$. This uniquely defines a sequence $\mathbf{a} = (a_n)_{n=0}^{\infty}$. 

Let us show that $\mathbf{a} \in \Sigma_M$.
Since $\Sigma_M$ is a SFT defined by the matrix $M$, it suffices to show that for each $n \geq 1$, we have $M(a_{n-1},a_n) = 1$. 
Let $n \geq 1$. By construction, we have that $(x_{n-1},x_n) \in G_0(a_{n-1})$ and $(x_n,x_{n+1}) \in G_0(a_n)$. Then $x_n \in R_0(a_{n-1})$ and $x_n \in D_0(a_n)$, and therefore $D_0(a_n) \cap R_0(a_{n-1}) \neq \varnothing$. By the Markov property, we conclude that $D_0(a_n) \subset R_0(a_{n-1})$, and therefore $M(a_{n-1},a_n) = 1$, as desired.

Finally, note that $(x ,\mathbf{a}) \in V$. Indeed, for each $k \geq 1$, let $\ell_k = k$, $y^k_n = x_n$ and $b^k_n = a_n$. Then we have exhibited the necessary sequences to establish that $(x,\mathbf{a}) \in V$.
\end{proof}

\section{Constructing the bad sets}

Our aim is to show that under certain conditions, we can construct a Borel entropy conjugacy between $X$ and $\Sigma_M$. In this construction, we identify ``bad sets", on which the Borel entropy conjugacy map will not be defined. The main source of difficulty in constructing our Borel entropy conjugacy arises from the fact that points in the trajectory space that stay in the critical set $P$ can have multiple symbolic codings. In order to deal with this difficulty, we group such symbolic codings into the ``bad sets" and show that we have only removed sets of strictly smaller entropy that the full system. In fact, we carry out this process for each irreducible component of  $\Sigma_M$ separately. In the following section, we define the critical set of points in $X$ that cause us difficulty. Then in the following sections we analyze the irreducible components in detail and construct their bad sets. 

\subsection{The critical system}

Let $F$ be in $\mathcal{F}$ with trajectory space $X$, SFT $\sigma_M$, and joint system $V$. Consider the set of trajectories contained in the critical set $P$: 
\[
X_P = \bigl\{ x \in X : \forall n \geq 0, x_n \in P \bigr\}.
\]
Note that $X_P$ is closed and invariant under $\sigma_X$. 
We refer to $X_P$ as the critical system. 
Now let $Z = \pi^{-1}(X_P) \subset V$, and note that $Z$ is closed and invariant under $\sigma_V$. 
As we mentioned above, one of the main difficulties in relating $X$ and $\Sigma_M$ lies in the fact that $\pi$ may not be injective on $Z$ (or its pre-images under the shift).

\subsection{Irreducible components}

We find it useful to distinguish between the following types of irreducible components for Markov multi-maps.
\begin{defn}
Let $F$ be in $\mathcal{F}$ with associated SFT $\Sigma_M$.
Let $\mathcal{C} \subset \mathcal{A}$ be an irreducible component of the $M$-graph. We say that
\begin{itemize}
\item $\mathcal{C}$ is of Type I if $\mathcal{C} \subset \mathcal{A}_0$;
\item $\mathcal{C}$ is of Type II if $\mathcal{C} \subset \mathcal{A}_2$;
\item $\mathcal{C}$ is of Type III if it is not Type I or Type II.
\end{itemize}
\end{defn}
\begin{rmk}
Suppose $\mathcal{C}$ is of Type III. Then for each $i \in \{0,1,2\}$, we must have $\mathcal{C} \cap \mathcal{A}_i \neq 0$. In fact, there must exist allowable transitions in $\mathcal{C}$ from $\mathcal{A}_0$ to $\mathcal{A}_2$ (cross-over), from $\mathcal{A}_2$ to $\mathcal{A}_1$ (into a vertical line), and from $\mathcal{A}_1$ to $\mathcal{A}_0$ (out of vertical line). 
\end{rmk}

Let $\Sigma_M(\mathcal{C})$ denote the irreducible component of $\Sigma_M$ corresponding to $\mathcal{C}$.
Note that $\Sigma_M(\mathcal{C})$ is an SFT contained in $\Sigma_M$.
Also, for distinct irreducible components $\mathcal{C}_1$ and $\mathcal{C}_2$, we have that $\Sigma_M(\mathcal{C}_1)$ and $\Sigma_M(\mathcal{C}_2)$ are disjoint.
Let $V(\mathcal{C})$ denote $\phi^{-1}(\Sigma_M(\mathcal{C})) \subset V$, where $V$ is the joint system.
Also, let $Z(\mathcal{C}) = Z \cap V(\mathcal{C})$, where $Z = \pi^{-1}(X_P) \subset V$.

\subsection{Constructing the bad sets: Types I and III}

We now show the existence of our ``bad sets" off of which $\phi$ and $\pi$ are injective.
In the proof of the following proposition, we use the following immediate consequence of Lemma \ref{Lemma:McCartney}: if $(x,\mathbf{a}), (x,\mathbf{b}) \in V$ and $x_m \notin P$, then for each $n < m$, we have $a_n = b_n$. Also, for notation, for any word $w \in \mathcal{L}$ and any $\mathbf{a} \in \Sigma_M$, let 
\[
N_w(\mathbf{a}) = \bigl| \bigl\{ n \geq 0 : \sigma^{n}(\mathbf{a}) \in [w] \bigr\} \bigr|.
\]

\begin{prop} \label{Prop:TypeI}
Let $F$ be in $\mathcal{F}$ with SFT $\Sigma_M$ and joint system $V$.  Suppose that $\mathcal{C}$ is a Type I or Type III irreducible component of the $M$-graph such that $\htop(\Sigma_M(\mathcal{C}),\sigma_M|_{\Sigma_M(\mathcal{C})}) >0$. 
Then there exists words $u^c$ and $u^a$ in $\mathcal{L}(\mathcal{C})$ such that if 
\[
B_0 = \{ \mathbf{a} \in \Sigma_M(\mathcal{C}) : N_{u^c}(\mathbf{a}) < \infty \text{ or } N_{u^a}(\mathbf{a}) < \infty \},
\]
and $B = \phi^{-1}(B_0)$, 
then
\begin{enumerate}
\item $\Pre(Z(\mathcal{C}) )\subset B$,
\item  $\hprob(B) < \htop(V(\mathcal{C}), \sigma|_{V(\mathcal{C}})$, and
\item both $\pi$ and $\phi$ are injective on $V(\mathcal{C}) \setminus B$.
\end{enumerate}
\end{prop}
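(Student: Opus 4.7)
The plan is to select $u^c$ as the coding word for $\mathcal{C}$ guaranteed by $F\in\mathcal{F}$, and $u^a$ as an avoiding word: for Type I components this is given by Definition~\ref{Defn:ClassF}, while for Type III one must build $u^a\in\mathcal{L}(\mathcal{C})$ from the graph structure, for instance via a word $ab$ with $a,b\in\mathcal{C}\cap\mathcal{A}_0$ and $D(b)\subsetneq R(a)$, so that $I_{ab}$ sits in the interior of $D(a)$ and is disjoint from $P$. With $B_0$ and $B=\phi^{-1}(B_0)$ defined as in the statement, $B_0$ is $\sigma_M$-invariant because both $N_{u^c}$ and $N_{u^a}$ transform correctly under the shift.

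A preliminary step I would verify is that for every $(x,\mathbf{a})\in V$ and every $n\geq 0$, $x_n\in I_{\sigma^n(\mathbf{a})}$. If $(y^k,a^k)\in\mathcal{S}_{\ell_k}$ approximates $(x,\mathbf{a})$, then iterating the preimages $f_{a_j}^{-1}$ gives $y^k_n\in I_{a_n\dots a_{n+L}}$ once $k$ is large enough that $a^k$ agrees with $\mathbf{a}$ up to index $n+L$; closedness of $I_{a_n\dots a_{n+L}}$ and passing $k\to\infty$ and then $L\to\infty$ yields the claim via the nested intersection defining $I_{\sigma^n(\mathbf{a})}$.

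With this in hand, parts (1) and (3) follow cleanly. For (1), $(x,\mathbf{a})\in\Pre(Z(\mathcal{C}))$ means $x_n\in P$ for all large $n$, so $N_{u^a}(\mathbf{a})=\infty$ would (via $x_n\in I_{\sigma^n(\mathbf{a})}$ and the avoiding property) put $x_n\notin P$ infinitely often, a contradiction; hence $\mathbf{a}\in B_0$ and $(x,\mathbf{a})\in B$. For (3), fix $(x,\mathbf{a})\in V(\mathcal{C})\setminus B$, so both $N_{u^c}(\mathbf{a})$ and $N_{u^a}(\mathbf{a})$ are infinite. The coding-word property applied to each shift forces $I_{\sigma^n(\mathbf{a})}=\{x_n\}$, so $x$ is determined by $\mathbf{a}$ and $\phi$ is injective on $V(\mathcal{C})\setminus B$; meanwhile the avoiding word produces infinitely many $n$ with $x_n\notin P$, and Lemma~\ref{Lemma:McCartney} applied to arbitrarily long initial segments $x_0,\dots,x_n$ then uniquely determines every symbol of $\mathbf{a}$ from $x$, giving injectivity of $\pi$.

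For (2), let $\nu\in\mathcal{M}_e(V(\mathcal{C}),\sigma_V)$ satisfy $\nu(B)>0$, and push forward to $\mu=\nu\circ\phi^{-1}\in\mathcal{M}_e(\Sigma_M(\mathcal{C}),\sigma_M)$. Then $\mu(B_0)>0$, and invariance of $B_0$ together with ergodicity gives $\mu(B_0)=1$; one of the invariant sets $\{N_{u^c}<\infty\}$ or $\{N_{u^a}<\infty\}$ then has full $\mu$-measure, so Birkhoff's theorem forces $\mu([u^c])=0$ or $\mu([u^a])=0$. Either way, the support of $\mu$ is a proper closed $\sigma_M$-invariant subset of the irreducible SFT $\Sigma_M(\mathcal{C})$, so entropy minimality yields $h(\mu)<\htop(\Sigma_M(\mathcal{C}))$. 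Combined with Proposition~\ref{Prop:PhiPreservesEntropy} and the fact that surjectivity of $\phi$ together with the variational principle gives $\htop(V(\mathcal{C}))=\htop(\Sigma_M(\mathcal{C}))$, this produces $h(\nu)<\htop(V(\mathcal{C}))$, and taking the supremum over such $\nu$ yields the bound on $\hprob(B)$. The main obstacle I expect is producing $u^a$ in the Type III case, since the hypothesis $F\in\mathcal{F}$ does not supply it directly; one must argue from the mix of symbols in $\mathcal{C}$ and from the positive-entropy assumption that an $\mathcal{A}_0$-subword with strictly contained range transitions exists in $\mathcal{L}(\mathcal{C})$.
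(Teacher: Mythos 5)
Your overall architecture matches the paper's proof closely: pick a coding word $u^c$ and an avoiding word $u^a$, form $B_0$ and $B$, use entropy minimality plus Proposition~\ref{Prop:PhiPreservesEntropy} for part~(2), use the coding word to pin down $x$ from $\mathbf{a}$ for $\phi$-injectivity, and use the avoiding word together with Lemma~\ref{Lemma:McCartney} to pin down $\mathbf{a}$ from $x$ for $\pi$-injectivity. Your explicit preliminary verification that $x_n\in I_{\sigma^n(\mathbf{a})}$ for $(x,\mathbf{a})\in V$ is a useful detail the paper implicitly relies on. Parts (1), (2), (3) are handled essentially as in the paper.

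The genuine gap is exactly where you suspected: the construction of $u^a$ when $\mathcal{C}$ is Type III, and the fix you sketch does not work. You propose looking for $a,b\in\mathcal{C}\cap\mathcal{A}_0$ with $ab\in\mathcal{L}(\mathcal{C})$ and $D(b)\subsetneq R(a)$, claiming that then $I_{ab}$ ``sits in the interior of $D(a)$ and is disjoint from $P$.'' That inference fails: $D(b)\subsetneq R(a)$ does not preclude $D(b)$ sharing an endpoint with $R(a)$, in which case $I_{ab}=f_a^{-1}(D(b))$ contains an endpoint of $D(a)$, i.e.\ a point of $P$. Moreover, for a Type III component there is no guarantee that two $\mathcal{A}_0$ symbols of $\mathcal{C}$ ever occur consecutively in $\mathcal{L}(\mathcal{C})$ at all. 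The paper's construction sidesteps both issues: since $\mathcal{C}$ is Type III and irreducible, there is an allowable transition $w_0w_1$ with $w_0\in\mathcal{A}_0$ and $w_1\in\mathcal{A}_1\cup\mathcal{A}_2$; then $D(w_1)$ is a singleton $\{p\}$, $M(w_0,w_1)=1$ forces $p\in R_0(w_0)$, so $I_{w_0w_1}=\{f_{w_0}^{-1}(p)\}\subset D_0(w_0)$ is a single point in an open interval and hence disjoint from $P$. (The paper also produces a coding word for Type III directly: any symbol $u^c\in\mathcal{C}\cap\mathcal{A}_1$, which exists by Type III, works because $D(u^c)$ is a singleton, so each occurrence of $u^c$ collapses $I_{a_0\cdots a_n}$ to a point. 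Using the hypothesized coding word, as you do, also works.) Replacing your Type III step with this argument closes the gap.
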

\begin{proof}
First, suppose that $\mathcal{C}$ is Type I. 
Since $F$ is in $\mathcal{F}$, it has a complete set of coding words, and we may select a coding word $u^c$ for $\mathcal{C}$. Similarly, since $F$ is in $\mathcal{F}$, it has a complete set of avoiding words, and then since $\mathcal{C}$ is of Type I, we may select an avoiding word $u^a$ for $\mathcal{C}$. 

Now suppose that $\mathcal{C}$ is Type III. 
Since $\mathcal{C}$ is of Type III, it contains a word $u^a = w_0 w_1$ such that $w_0 \in \mathcal{A}_0$ and $w_1 \in \mathcal{A}_1 \cup \mathcal{A}_2$. Note that $u^a$ is an avoiding word. Furthermore, since $\mathcal{C}$ is of Type III, it contains a symbol $u^c \in \mathcal{A}_1$. Note that $u^c$ is a coding word for $\mathcal{C}$.

For the remainder of the proof, we do not distinguish between whether $\mathcal{C}$ is Type I or Type III.

Then let
\[
B_0 = \{ \mathbf{a} \in \Sigma_M(\mathcal{C}) : N_{u^c}(\mathbf{a}) < \infty \text{ or } N_{u^a}(\mathbf{a}) < \infty \},
\]
and let $B = \phi^{-1}(B_0)$. (Note that $B_0$ and $B$ are invariant.)

To establish (1), let $(x,\mathbf{a}) \in \Pre(Z(\mathcal{C}))$. Then there exists $N$ such that for each $n\geq N$, we have $x \in I_{\sigma^{n}(\mathbf{a})} \cap P$. Since $u^a$ is an avoiding word, we see that $N_{u^a}(\mathbf{a}) < \infty$. It follows that $\mathbf{a} \in B_0$, and therefore $(x,\mathbf{a}) \in B$.

Now we establish (2).
Let $Y \subset \Sigma_M$ be the SFT obtained by forbidding $u^c$ and $u^a$. 
Since $\Sigma_M(\mathcal{C})$ is irreducible, it is entropy minimal. Then $\htop(Y,\sigma|_Y) < \htop(\Sigma_M(\mathcal{C}),\sigma_M|_{\Sigma_M(\mathcal{C})})$, as $Y$ is a strict subsystem of $\Sigma_M(\mathcal{C})$.
Suppose $\mu$ is an ergodic measure on $\Sigma_M(\mathcal{C})$ such that $\mu(B_0) =1$. As $\mu$ is ergodic and the words $u^c$ and $u^a$ appear only finitely for points in $B_0$, we must have $\mu([u^c]) = \mu([u^a]) = 0$, and therefore $\mu(Y) = 1$. Then by the variational principle, we see that $h(\mu) \leq \htop(Y,\sigma|_Y)$. Taking the supremum over all such $\mu$, we obtain that $\hprob(B_0) \leq \htop(Y,\sigma|_Y) < \htop(\Sigma_M(\mathcal{C}),\sigma_M|_{\Sigma_M(\mathcal{C})})$. 
Furthermore, since $\phi$ preserves the entropy of ergodic measures (by Proposition \ref{Prop:PhiPreservesEntropy}), we obtain that $\hprob(B) = \hprob(B_0) < \htop(\Sigma_M(\mathcal{C}),\sigma_M|_{\Sigma_M(\mathcal{C})}) = \htop(V(\mathcal{C}),\sigma_V|_{V(\mathcal{C})})$.

To show that $\phi$ is injective on $V(\mathcal{C}) \setminus B$, let $(x,\mathbf{a}) \in V(\mathcal{C}) \setminus B$, and suppose $(y,\mathbf{a}) \in V(\mathcal{C}) \setminus B$. Then $\mathbf{a} \notin B_0$, and in fact $\sigma^n(\mathbf{a}) \notin B_0$ for all $n \geq 0$. Let $n \geq 0$. Then $\sigma^n(\mathbf{a})$ contains the word $u^c$ infinitely many times, and therefore $I_{\sigma^n(\mathbf{a})}$ is a singleton (since $u^c$ is a coding word). Since we must have both $x_n \in I_{\sigma^n(\mathbf{a})}$ and $y_n \in I_{\sigma^n(\mathbf{a})}$, we conclude that $x_n = y_n$. As $n \geq 0$ was arbitrary, we have shown that $\phi$ is injective on $V(\mathcal{C}) \setminus B$.

To show that $\pi$ is injective on $V(\mathcal{C}) \setminus B$, let $(x,\mathbf{a}), (x,\mathbf{b}) \in V(\mathcal{C}) \setminus B$. Let $T = \{ m \geq 0 : \sigma^m(\mathbf{a}) \in [u^a] \}$. For each $m \in T$, we have that $x_m \in I_{\sigma^m(\mathbf{a})} \subset [0,1] \setminus P$ (since $u^a$ is an avoiding word). Since $(x,\mathbf{a}) \in V(\mathcal{C}) \setminus B$, the set $T$ must be infinite. Let $n \geq 0$. Since $T$ is infinite, there exists $m > n$ such that $m \in T$. Then $x_m \notin P$. By Lemma \ref{Lemma:McCartney}, we see that $a_n = b_n$. 
As $n \geq 0$ was arbitrary, we conclude that $\pi$ is injective on $V(\mathcal{C}) \setminus B$.
\end{proof}

\subsection{Constructing the bad sets: Type II}

We don't have to remove any bad sets from Type II components. Indeed, the following proposition establishes that $\pi$ and $\phi$ are injective on the union of all Type II components.
\begin{prop} \label{Prop:TypeII}
Let $F$ be in $\mathcal{F}$, and let 
\[
V_P = \bigcup_{\mathcal{C} \text{ of Type II}} V(\mathcal{C}).
\]
Then $\pi$ and $\phi$ are injective on $V_P$. 
\end{prop}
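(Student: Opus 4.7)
The plan is to exploit the fact that symbols in $\mathcal{A}_2$ correspond to degenerate graphs. Specifically, for any $a \in \mathcal{A}_2$, the definition of a Markov multi-map forces $D(a) = \{p\}$ and $R(a) = \{q\}$ for some $p, q \in P$, so $G(a) = G_0(a) = \{(p, q)\}$ is a single point. Consequently, if $\mathcal{C}$ is a Type II irreducible component and $(x, \mathbf{a}) \in V(\mathcal{C})$, then every symbol $a_n$ lies in $\mathcal{A}_2$, and by Proposition \ref{Lemma:PiWellDef} we have $(x_n, x_{n+1}) \in G(a_n)$, which is a single point. Thus the pair $(x_n, x_{n+1})$ is completely determined by $a_n$. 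This observation drives both injectivity statements.

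For injectivity of $\phi$ on $V_P$, I would take $(x, \mathbf{a})$ and $(y, \mathbf{a})$ both in $V_P$. Say $(x, \mathbf{a}) \in V(\mathcal{C}_1)$ and $(y, \mathbf{a}) \in V(\mathcal{C}_2)$ for Type II components $\mathcal{C}_1, \mathcal{C}_2$; since $\Sigma_M(\mathcal{C}_1)$ and $\Sigma_M(\mathcal{C}_2)$ are disjoint whenever $\mathcal{C}_1 \neq \mathcal{C}_2$ and both contain $\mathbf{a}$, in fact $\mathcal{C}_1 = \mathcal{C}_2$. Using the observation above, $(x_n, x_{n+1}) \in G(a_n)$ and $(y_n, y_{n+1}) \in G(a_n)$, and since $G(a_n)$ is a singleton, $x_n = y_n$ for every $n \geq 0$, so $x = y$.

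For injectivity of $\pi$ on $V_P$, I would take $(x, \mathbf{a}), (x, \mathbf{b}) \in V_P$ and show $\mathbf{a} = \mathbf{b}$. Each $a_n, b_n$ lies in $\mathcal{A}_2$, and by Proposition \ref{Lemma:PiWellDef} the point $(x_n, x_{n+1})$ lies in both $G(a_n) = G_0(a_n)$ and $G(b_n) = G_0(b_n)$. Because $F$ is properly parametrized, the family $\{G_0(c) : c \in \mathcal{A}\}$ partitions $G(F)$, so $G_0(a_n) \cap G_0(b_n) \neq \varnothing$ forces $a_n = b_n$. Hence $\mathbf{a} = \mathbf{b}$.

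There is no serious obstacle here: the entire argument reduces to noting that $\mathcal{A}_2$ symbols have point-graphs, after which injectivity of $\phi$ is immediate from the singleton structure of $G(a_n)$, and injectivity of $\pi$ follows from proper parametrization. The only subtlety worth stating cleanly is that $V_P$ is a disjoint union of the $V(\mathcal{C})$'s (inherited from the disjointness of irreducible components), so each point of $V_P$ has a well-defined ``component'' in which both injectivity arguments take place.
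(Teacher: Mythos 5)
Your proof is correct and takes essentially the same approach as the paper: both rest on the observation that symbols in $\mathcal{A}_2$ have singleton graphs $G(a) = G_0(a) = \{(p,q)\}$, which immediately pins down $x_n$ given $a_n$ (for $\phi$-injectivity) and $a_n$ given $(x_n,x_{n+1})$ (for $\pi$-injectivity). Your explicit appeal to proper parametrization in the $\pi$-injectivity step is the right way to justify the paper's terser claim that $G(a_n) = G(b_n)$ implies $a_n = b_n$.
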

\begin{proof}
Suppose $(x,\mathbf{a}), (x,\mathbf{b}) \in V_P$. Then $a_n, b_n \in \mathcal{A}_2$ for all $n \geq 0$, and we must have $G(a_n) = \{(x_n,x_{n+1})\} = G(b_n)$ for all $n$. Therefore $a_n = b_n$ for all $n \geq 0$, and  $\pi$ is injective on $V_P$.

Suppose that $(x,\mathbf{a}), (y,\mathbf{a}) \in V_P$. Then $D(a_n)$ is a singleton for each $n$, and we must have $\{x_n\} = D(a_n) = \{y_n\}$ for all $n$. Therefore $x_n = y_n$ for all $n \geq 0$, and $\phi$ is injective on $V_P$.
\end{proof}

\section{Proof of the main result} \label{Sect:MainProof}

Now that we have constructed the bad sets for each type of irreducible component, we are ready to prove our main result on Borel entropy conjugacy.

\vspace{2mm}

\begin{PfofThmEntropyConjugacy}

Let $F$ be in  $\mathcal{F}$ with associated trajectory space $X$ and SFT $\Sigma_M$. Furthermore, let $V$ be the associated joint space, as in Definition \ref{Defn:V}, and let $\phi : V \to \Sigma_M$ and $\pi : V \to X$ be the maps defined in Definitions \ref{Defn:Phi} and \ref{Defn:Pi}, respectively. 

Since $F \in \mathcal{F}$, we have that $\Sigma_M$ has positive entropy.
Enumerate the irreducible components with positive entropy: $\mathcal{C}_1,\dots,\mathcal{C}_J$. For each $\mathcal{C}_j$ of Type I or Type III, let $B_j \subset V(\mathcal{C}_j)$ be the bad set given by Proposition \ref{Prop:TypeI}. For each $\mathcal{C}_j$ of Type II, let $B_j = \varnothing$. Furthermore, let 
\[
B_0 = \phi^{-1} \Biggl(  \Sigma_M \setminus \Biggl( \bigcup_{j = 1}^J \Sigma_M( \mathcal{C}_j )\Biggr) \Biggr).
\]
Then let
\[
B =  \bigcup_{j=0}^{J} B_j.
\]
For each $j = 1, \dots, J$, let $A_j = V(\mathcal{C}_j) \setminus B_j$. Note that
\begin{equation} \label{Eqn:Barron}
V \setminus B = \bigcup_{j=1}^J A_j.
\end{equation}

\begin{prop}\label{Proposition:Pi}
$\pi$ is injective on $V \setminus B$.
\end{prop}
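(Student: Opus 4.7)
The plan is to take $(x,\mathbf{a}), (x,\mathbf{b}) \in V \setminus B$ with the same first coordinate and show that $\mathbf{a} = \mathbf{b}$. Using the decomposition \eqref{Eqn:Barron}, I can write $(x,\mathbf{a}) \in A_j$ and $(x,\mathbf{b}) \in A_k$ for some $j, k \in \{1,\dots,J\}$. If $j = k$ then I am done: Proposition~\ref{Prop:TypeI}(3) gives injectivity of $\pi$ on $A_j$ when $\mathcal{C}_j$ is Type I or Type III, while when $\mathcal{C}_j$ is Type II we have $A_j = V(\mathcal{C}_j) \subset V_P$ and Proposition~\ref{Prop:TypeII} applies. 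The substance of the proof is to rule out $j \neq k$.

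The key observation is that the type of $\mathcal{C}_j$ controls how $x$ meets the critical set $P$. If $\mathcal{C}_j$ is Type II, every symbol of $\mathbf{a}$ lies in $\mathcal{A}_2$, and the graph relation $(x_n,x_{n+1}) \in G(a_n)$ forces $x_n \in D(a_n) \subset P$ for every $n$, so $x \in X_P$. If instead $\mathcal{C}_j$ is Type I or Type III, then $(x,\mathbf{a}) \notin B_j$ gives $N_{u^a}(\mathbf{a}) = \infty$ for the avoiding word $u^a$ used to construct $B_j$; taking limits in the defining sequence for $V$ shows $x_n \in I_{\sigma^n(\mathbf{a})}$ for every $n$, and since $I_{\sigma^n(\mathbf{a})} \cap P = \varnothing$ whenever $\sigma^n(\mathbf{a}) \in [u^a]$, there are infinitely many $n$ with $x_n \notin P$. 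These behaviors are mutually exclusive, so $\mathcal{C}_j$ and $\mathcal{C}_k$ must be of the same kind. If both are Type II, then $(x,\mathbf{a}), (x,\mathbf{b}) \in V_P$ and Proposition~\ref{Prop:TypeII} yields $\mathbf{a} = \mathbf{b}$, contradicting the disjointness of $\Sigma_M(\mathcal{C}_j)$ and $\Sigma_M(\mathcal{C}_k)$.

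It remains to handle the case that $\mathcal{C}_j$ and $\mathcal{C}_k$ are both Type I or Type III but $j \neq k$. From the preceding paragraph there are infinitely many indices $n$ with $x_{n+1} \notin P$. For any such $n$, I would apply Lemma~\ref{Lemma:McCartney} to the finite trajectory $x_0,\dots,x_{n+1}$ together with the sequence of special finite labeled trajectories guaranteed by $(x,\mathbf{a}) \in V$, concluding that $a_0 \dots a_n$ equals the unique special label $w$ of $x_0,\dots,x_{n+1}$; the same argument applied to $(x,\mathbf{b})$ gives $b_0 \dots b_n = w$. Letting $n$ range over this infinite set of good indices yields $\mathbf{a} = \mathbf{b}$, again contradicting the disjointness of the irreducible components. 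I expect this last step to be the main obstacle, and it is precisely where the avoiding words do their essential work: without them one cannot guarantee enough indices outside $P$ to propagate the uniqueness supplied by Lemma~\ref{Lemma:McCartney} to every prefix of the symbolic coding.
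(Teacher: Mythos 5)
Your proposal is correct and rests on exactly the same key ingredients as the paper's proof: Lemma~\ref{Lemma:McCartney} to propagate the unique special label through indices where $x$ leaves $P$, Proposition~\ref{Prop:TypeII} for codings confined to $X_P$, and the avoiding-word clause of Proposition~\ref{Prop:TypeI} to tie the component type to the behavior of $x$ relative to $P$. The paper organizes the proof a bit more economically: it conditions directly on whether $x_n \notin P$ for infinitely many $n$ (applying Lemma~\ref{Lemma:McCartney} outright, with no need to first locate $\mathbf{a}$ and $\mathbf{b}$ in components or treat $j=k$ separately), and only in the complementary case does it invoke $\Pre(Z(\mathcal{C}_k)) \subset B_k$ to conclude both components are Type II and apply Proposition~\ref{Prop:TypeII}. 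Your reduction to the case $j \neq k$ and your direct derivation of ``Type I/III implies infinitely many escapes from $P$'' are the contrapositive of the paper's Case 2, so the logical content is the same, just packaged around component types rather than around the behavior of the first coordinate.
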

\begin{proof}
Suppose that $(x,\mathbf{a}), (x,\mathbf{b}) \in V \setminus B$. By (\ref{Eqn:Barron}), there exists $i,j$ such that $(x,\mathbf{a}) \in A_i$ and $(x,\mathbf{b}) \in A_j$. If $x_n \notin P$ for infinitely many $n$, then $\mathbf{a} = \mathbf{b}$ by Lemma \ref{Lemma:McCartney}. 

Now suppose that $x_n \in P$ for all by finitely many $n$. Then there exists $N$ such that $\sigma^{N}(x) \in X_P$. Hence $(x,\mathbf{a}), (x,\mathbf{b}) \in \Pre(Z)$, and therefore $\mathcal{C}_i$ and $\mathcal{C}_j$ must be of Type II (since $A_k \cap Z = \varnothing$ whenever $\mathcal{C}_k$ is of Type I or Type III). Since $\pi$ is injective on $V_P$,
we conclude that $\mathbf{a} = \mathbf{b}$.
\end{proof}

\begin{prop}\label{Proposition:Phi}
$\phi$ is injective on $V \setminus B$.
\end{prop}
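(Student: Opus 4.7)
The plan is to reduce the statement to the per-component injectivity results already established in Propositions \ref{Prop:TypeI} and \ref{Prop:TypeII}. Suppose $(x,\mathbf{a}), (y,\mathbf{a}) \in V \setminus B$ share the same $\phi$-image $\mathbf{a}$. Since $\mathbf{a} \notin \Sigma_M \setminus \bigcup_{j=1}^J \Sigma_M(\mathcal{C}_j)$ (otherwise both points would lie in $B_0 \subseteq B$), and since the irreducible components $\Sigma_M(\mathcal{C}_1),\dots,\Sigma_M(\mathcal{C}_J)$ are pairwise disjoint, there is a unique index $j$ with $\mathbf{a} \in \Sigma_M(\mathcal{C}_j)$. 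Consequently both $(x,\mathbf{a})$ and $(y,\mathbf{a})$ belong to $V(\mathcal{C}_j) = \phi^{-1}(\Sigma_M(\mathcal{C}_j))$, and because neither lies in $B_j$, both lie in $A_j = V(\mathcal{C}_j) \setminus B_j$.

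Now split into cases according to the type of $\mathcal{C}_j$. If $\mathcal{C}_j$ is of Type I or Type III, then Proposition \ref{Prop:TypeI}(3) asserts that $\phi$ is injective on $V(\mathcal{C}_j) \setminus B_j = A_j$, which immediately gives $x = y$. If $\mathcal{C}_j$ is of Type II, then by construction $B_j = \varnothing$, so $A_j = V(\mathcal{C}_j) \subseteq V_P$; Proposition \ref{Prop:TypeII} says that $\phi$ is injective on $V_P$, so again $x = y$.

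Since these are the only two cases, we conclude that $\phi$ is injective on $V \setminus B$. There is no real obstacle here: the entire argument is a bookkeeping combination of the component-wise results, once one observes that the fibers of $\phi$ never straddle two distinct irreducible components and that the complement of $B$ decomposes accordingly as $\bigsqcup_j A_j$, as was already noted in (\ref{Eqn:Barron}). The only subtlety worth flagging is that we must use $(x,\mathbf{a}) \notin B_0$ to ensure $\mathbf{a}$ lies in some $\Sigma_M(\mathcal{C}_j)$ at all, so that the case analysis is exhaustive.
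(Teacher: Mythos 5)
Your proof is correct and follows essentially the same route as the paper: reduce to a single irreducible component via the disjointness of $\Sigma_M(\mathcal{C}_1),\dots,\Sigma_M(\mathcal{C}_J)$ and the decomposition $V\setminus B=\bigcup_j A_j$, then invoke the per-component injectivity from Propositions~\ref{Prop:TypeI} and~\ref{Prop:TypeII}. You spell out the Type~I/III vs.\ Type~II case split a bit more explicitly than the paper does, but the argument is otherwise identical.
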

\begin{proof}
Suppose that $(x,\mathbf{a}), (y,\mathbf{a}) \in V \setminus B$. By (\ref{Eqn:Barron}), there exists $i,j$ such that $(x,\mathbf{a}) \in A_i$ and $(y,\mathbf{a}) \in A_j$. Then $\mathbf{a} \in \Sigma_M(\mathcal{C}_i) \cap \Sigma_M(\mathcal{C}_j)$. Since distinct irreducible components are disjoint, we see that $i = j$. Since $\phi$ is injective on $A_i$, we conclude that $x = y$.
\end{proof}

\begin{lemma}\label{Lemma:h(X)=h(V)}
$\htop(X,\sigma_X) = \htop(V,\sigma_V)$.
\end{lemma}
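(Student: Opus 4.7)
The plan is to prove the two inequalities $\htop(X,\sigma_X) \leq \htop(V,\sigma_V)$ and $\htop(V,\sigma_V) \leq \htop(X,\sigma_X)$ separately. The first is immediate: the map $\pi : V \to X$ is continuous, shift-equivariant, and surjective (surjectivity from Proposition \ref{Lemma:Surjective}), so $(X,\sigma_X)$ is a topological factor of $(V,\sigma_V)$, and $\htop(X,\sigma_X) \leq \htop(V,\sigma_V)$.

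For the reverse inequality, I plan to exploit the bad set $B = B_0 \cup \bigcup_{j=1}^{J} B_j$ assembled just before the lemma. The key claim is that $\hprob(B) < \htop(V,\sigma_V)$, with $\hprob$ taken relative to $\sigma_V$. For each $j \geq 1$, Proposition \ref{Prop:TypeI} directly gives $\hprob(B_j) < \htop(V(\mathcal{C}_j),\sigma_V|_{V(\mathcal{C}_j)}) \leq \htop(V,\sigma_V)$. For $B_0 = \phi^{-1}(\Sigma_M \setminus \bigcup_j \Sigma_M(\mathcal{C}_j))$, the argument runs as follows: if $\mu \in \mathcal{M}_e(V,\sigma_V)$ satisfies $\mu(B_0) > 0$, then $\nu := \mu \circ \phi^{-1}$ is an ergodic measure on $\Sigma_M$ charging $\Sigma_M \setminus \bigcup_j \Sigma_M(\mathcal{C}_j)$. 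Any $\sigma_M$-invariant probability measure is carried by the non-wandering set (the union of all irreducible components) and ergodicity forces $\nu$ to concentrate on a single irreducible component; since the components omitted in the union all have zero entropy, $h(\nu) = 0$. Proposition \ref{Prop:PhiPreservesEntropy} then yields $h(\mu) = 0$, so $\hprob(B_0) = 0$. Since $F \in \mathcal{F}$ forces $\htop(\Sigma_M,\sigma_M) > 0$, and hence $\htop(V,\sigma_V) > 0$ (again via Proposition \ref{Prop:PhiPreservesEntropy}), the combined bound $\hprob(B) < \htop(V,\sigma_V)$ follows.

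With this in hand the variational principle finishes the argument. Let $\mu \in \mathcal{M}_e(V,\sigma_V)$ with $h(\mu) > \hprob(B)$. Then $\mu(B) = 0$ by definition of $\hprob$, so $\mu$ concentrates on the shift-invariant Borel set $V \setminus B$ on which $\pi$ is injective by Proposition \ref{Proposition:Pi}. By the Lusin-Souslin theorem (applicable since $V$ and $X$ are standard Borel), $\pi$ restricts to a Borel isomorphism from $V \setminus B$ onto a Borel subset of $X$, making $(V,\mu,\sigma_V)$ measure-theoretically isomorphic to $(X,\pi_*\mu,\sigma_X)$; hence $h(\mu) = h(\pi_*\mu) \leq \htop(X,\sigma_X)$. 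Taking the supremum of $h(\mu)$ over all ergodic $\mu$ with $h(\mu) > \hprob(B)$ recovers $\htop(V,\sigma_V)$ precisely because $\hprob(B) < \htop(V,\sigma_V)$, completing the proof.

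The main obstacle I anticipate is verifying cleanly that $\hprob(B_0) = 0$: this step requires the invariant-measure structure on $\Sigma_M$ outside of its positive-entropy components (to reduce to zero-entropy irreducible pieces) and then the transfer of the entropy bound back to $V$ via Proposition \ref{Prop:PhiPreservesEntropy}. All other steps are essentially book-keeping on top of the injectivity statements in Propositions \ref{Prop:TypeI} and \ref{Proposition:Pi}.
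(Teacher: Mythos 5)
Your proof is correct, and it is a valid alternative to the paper's argument, differing primarily in how the bad set is handled. The paper avoids the need to bound $\hprob(B)$ globally: it simply picks an irreducible component $\mathcal{C}_i$ realizing $\htop(V,\sigma_V) = \htop(V(\mathcal{C}_i),\sigma_V|_{V(\mathcal{C}_i)})$, takes the (ergodic) measure of maximal entropy $\mu$ on $V(\mathcal{C}_i)$ (existence comes from $\Sigma_M(\mathcal{C}_i)$ having such a measure plus Proposition~\ref{Prop:PhiPreservesEntropy}), observes that $\hprob(B_i) < h(\mu)$ forces $\mu(B_i) = 0$, and concludes via injectivity of $\pi$ on $V(\mathcal{C}_i) \setminus B_i$ (Proposition~\ref{Prop:TypeI}) that $\pi$ is a $\mu$-a.e.\ isomorphism. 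This sidesteps $B_0$ entirely. You instead establish the stronger statement $\hprob(B) < \htop(V,\sigma_V)$, which requires the extra argument that ergodic measures on $\Sigma_M$ concentrate on single irreducible components so $\hprob(B_0)=0$, and then you appeal to Proposition~\ref{Proposition:Pi} and Lusin--Souslin to transfer entropy. Both routes are sound; yours effectively front-loads work the paper does later (in the proposition about $A_X$), while the paper's version is more economical for this particular lemma because it only needs one component and one measure. You are also more explicit than the paper about the measurability justification (Lusin--Souslin) for why an injective Borel map is an isomorphism onto its image, which the paper leaves implicit.
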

\begin{proof}
First, by Proposition \ref{Lemma:PiWellDef}, we have that $\pi$ is a factor map. Since entropy cannot increase under a factor map, $\htop(X,\sigma_X) \leq \htop(V,\sigma_V)$.

Now fix $\mathcal{C}_i$ such that $\htop(V,\sigma_V) = \htop(V(\mathcal{C}_i),\sigma|_{V(\mathcal{C}_i)})$, and let $\mu$ be an ergodic measure on $V(\mathcal{C}_i)$ such that $h(\mu) = \htop(V(\mathcal{C}_i),\sigma|_{V(\mathcal{C}_i)})$ (which exists $\Sigma_M(\mathcal{C}_i)$ has a measure of maximal entropy and $\phi$ preserves entropy). Since $\hprob(B_i) < \htop(V(\mathcal{C}_i),\sigma|_{V(\mathcal{C}_i)}) = h(\mu)$, we must have that $\mu(B_i) = 0$. Then $\pi$ is injective a set of full $\mu$-measure, and therefore $\pi$ is an isomorphism from $\mu$ to $\pi \mu = \mu \circ \pi^{-1}$. In particular, $\htop(V,\sigma|_V) = \htop(V(\mathcal{C}_i),\sigma|_{V(\mathcal{C}_i)}) = h(\mu) = h(\pi \mu) \leq \htop(X,\sigma|_X)$, where the last inequality follows from the Variational Principle. We have now shown that $\htop(X,\sigma_X) = \htop(V,\sigma_V)$.
\end{proof}

Let $A_X = \pi( V \setminus B)$ and $A_{\Sigma} = \phi(V \setminus B)$. 
\begin{prop}
$A_X$ is Borel, $\sigma(A_X) = A_X$, and $\hprob(X \setminus A_X) < \htop(X,\sigma|_X)$.
\end{prop}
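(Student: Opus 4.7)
The plan is to establish the three assertions in sequence, using the structural properties of $B=\bigcup_{j=0}^J B_j$ and the factor maps $\pi,\phi$.

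\textbf{Borel measurability.} First $B$ is Borel: each $B_j$ with $j\geq 1$ of Type I or III equals $\phi^{-1}$ of the set $\{\mathbf{a}\in\Sigma_M(\mathcal{C}_j):N_{u^c}(\mathbf{a})<\infty\text{ or }N_{u^a}(\mathbf{a})<\infty\}$, which is Borel, while $B_0=\phi^{-1}(\Sigma_M\setminus\bigcup_j\Sigma_M(\mathcal{C}_j))$ is also Borel. Hence $V\setminus B$ is Borel in the Polish space $V$. Since $\pi$ is continuous and injective on $V\setminus B$ by Proposition~\ref{Proposition:Pi}, the Lusin--Souslin theorem yields that $A_X=\pi(V\setminus B)$ is Borel in $X$.

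\textbf{Shift invariance.} For $\sigma_X(A_X)\subset A_X$, I check $\sigma_V^{-1}(B)\subset B$: each $B_{j,0}$ with $j\geq 1$ is $\sigma_M$-invariant because shifting preserves the (in)finitude of word occurrences, and the wandering set $W=\Sigma_M\setminus\bigcup_j\Sigma_M(\mathcal{C}_j)$ satisfies $\sigma_M^{-1}(W)\subset W$ since any transient symbol in $\sigma_M(\mathbf{a})$ is already present in $\mathbf{a}$. Pushing through $\pi$ gives $\sigma_X(A_X)\subset A_X$. For the reverse inclusion, given $v=(x,\mathbf{a})\in V(\mathcal{C}_j)\setminus B_j$, I use irreducibility of $\mathcal{C}_j$ to pick $b_0\in\mathcal{C}_j$ with $M(b_0,a_0)=1$, set $y_0=f_{b_0}^{-1}(x_0)$ if $b_0\in\mathcal{A}_0$ (otherwise the unique element of $D(b_0)$), and construct a pre-image $w=(y,b_0\mathbf{a})\in V(\mathcal{C}_j)$ of $v$ under $\sigma_V$ by prepending the corresponding point to each special finite labeled trajectory witnessing $v\in V$. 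Prepending a single symbol cannot destroy infinite occurrence counts of $u^c$ or $u^a$, so $b_0\mathbf{a}\notin B_{j,0}$ and thus $w\in V\setminus B$, giving $x=\sigma_X(\pi(w))\in\sigma_X(A_X)$.

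\textbf{Entropy bound.} Let $\nu$ be any ergodic measure on $X$ with $\nu(X\setminus A_X)>0$. By invariance of $\nu$ and the fact that $A_X\subset\sigma_X^{-1}(A_X)\subset\cdots$ is an increasing chain, $\nu(\Pre(A_X))=\nu(A_X)$; since $\Pre(A_X)$ is $\sigma_X$-invariant, ergodicity forces $\nu(A_X)\in\{0,1\}$, and hence $\nu(X\setminus A_X)=1$. By ergodic decomposition of invariant extensions, I choose an ergodic lift $\tilde\nu$ on $V$ with $\pi_*\tilde\nu=\nu$ and $h(\tilde\nu)\geq h(\nu)$. Since $\pi^{-1}(X\setminus A_X)\subset B$, we have $\tilde\nu(B)=1$, so $h(\nu)\leq h(\tilde\nu)\leq\hprob(B)=\max_j\hprob(B_j)$ (computed in $V$). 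By Proposition~\ref{Prop:TypeI}, $\hprob(B_j)<\htop(V(\mathcal{C}_j),\sigma_V|_{V(\mathcal{C}_j)})\leq\htop(V,\sigma_V)=\htop(X,\sigma_X)$ for Type I and III components (using Lemma~\ref{Lemma:h(X)=h(V)}), and $B_j=\varnothing$ for Type II. For $B_0$, Poincar\'e recurrence forbids any invariant measure on $\Sigma_M$ from charging the wandering set $W$, so $\hprob(B_0)=-\infty$. The maximum of these finitely many strict inequalities gives $\hprob(X\setminus A_X)<\htop(X,\sigma_X)$.

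The main obstacle is the reverse shift inclusion, which requires lifting the pre-image from $\Sigma_M(\mathcal{C}_j)$ back into $V$ while preserving the special labeled trajectory structure defining $V$; by contrast, the Borel measurability follows immediately from Lusin--Souslin, and the entropy bound reduces cleanly to the bounds already proven in Proposition~\ref{Prop:TypeI}.
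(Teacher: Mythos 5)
Your proof is essentially correct, but it deviates from the paper's route in instructive ways and contains one small factual error.

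For Borel measurability, you invoke Lusin--Souslin on the injective continuous restriction $\pi|_{V\setminus B}$. The paper instead writes $A_i$ explicitly as a countable intersection and union of shifted cylinder sets and asserts that $\pi$ commutes through that set algebra, relying on compactness of $\pi[u]$ and $\pi[v]$. Your route is cleaner, since pushing $\pi$ through intersections requires injectivity on the intermediate sets (which are not subsets of $V\setminus B$), a subtlety the paper glosses over. For shift invariance, the paper simply asserts $\sigma_V(A_i)=A_i$ and pushes this through $\pi$; your argument fills in the nontrivial reverse inclusion by explicitly constructing a $\sigma_V$-preimage via irreducibility of $\mathcal{C}_j$ and the map $f_{b_0}^{-1}$. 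That construction is correct and is exactly what makes $\sigma_V(A_j)=A_j$ true. For the entropy bound, your argument via an ergodic lift $\tilde\nu$ is parallel to the paper's.

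The one error: you assert $\hprob(B_0)=-\infty$ by identifying $\Sigma_M\setminus\bigcup_{j=1}^J\Sigma_M(\mathcal{C}_j)$ with the wandering set and invoking Poincar\'e recurrence. But $\mathcal{C}_1,\dots,\mathcal{C}_J$ enumerate only the irreducible components of \emph{positive entropy}; the complement also contains all zero-entropy irreducible components, which are non-wandering and certainly carry invariant (indeed ergodic) measures. So $\hprob(B_0)$ can equal $0$, not $-\infty$. The conclusion is unharmed: any ergodic $\mu$ on $V$ with $\mu(B_0)=1$ pushes forward under $\phi$ to a measure supported on a zero-entropy component or on the genuine wandering set, so $h(\phi_*\mu)=0$, and by Proposition~\ref{Prop:PhiPreservesEntropy} $h(\mu)=0<\htop(X,\sigma_X)$ (positive entropy being part of Definition~\ref{Defn:ClassF}). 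So replace $-\infty$ by $0$ and the rest of your maximum over $j$ goes through.
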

\begin{proof}
Consider $A_i$. First suppose that $\mathcal{C}_i$ is of Type II. Then $A_i = V(\mathcal{C}_i)$, which is compact. Thus $\pi(A_i)$ is also compact. In particular, $\pi(A_i)$ is closed and hence Borel.

Now suppose that $\mathcal{C}_i$ is of Type I or Type III. Then there exist words $u$ and $v$ in $\mathcal{L}(\mathcal{C}_i)$ (in particular, a coding word and any avoiding word) such that
\[
A_i = \bigcap_{N} \Biggl[ \Biggl( \bigcup_{n \geq N} \sigma^{-n}[u] \Biggr) \cap \Biggl( \bigcup_{n \geq N} \sigma^{-n}[v] \Biggr) \Biggr].
\]
Note that for each $n$, the sets $[u]$ and $[v]$ are compact. Hence $\pi[u]$ and $\pi[v]$ are compact and in particular closed. Then
\[
\pi(A_i) = \bigcap_{N} \Biggl[ \Biggl( \bigcup_{n \geq N} \sigma^{-n}\pi[u] \Biggr) \cap \Biggl( \bigcup_{n \geq N} \sigma^{-n}\pi[v] \Biggr) \Biggr],
\]
which shows that $\pi(A_i)$ is Borel. Finally, since $A_X = \cup_i \pi(A_i)$, we conclude that $A_X$ is Borel.

For each $A_i$, we have $\sigma(A_i) = A_i$, and therefore $\sigma( \pi(A_i) ) = \pi ( \sigma(A_i) ) = \pi(A_i)$. As $A_X = \sqcup_i \pi(A_i)$, we see that $\sigma(A_X) = A_X$.

Let $\nu$ be an ergodic invariant measure on $X$ such that $\nu(X \setminus A_X) >0$. Since $A_X$ is invariant and $\nu$ is ergodic, we have that $\nu(X \setminus A_X) = 1$, and therefore $\nu(A_X) = 0$.  Also, $\nu$ is supported on some set of the form $\pi(B_i)$, with $1 \leq i \leq J$. Let $\mu$ be an ergodic measure on $V$ such that $\pi \mu = \nu$. Then $\mu(A) \leq \mu( \pi^{-1} \pi(A) ) = \nu(A_X) = 0$, and $\mu(V(\mathcal{C}_i)) = 1$. Therefore $\mu(B_i) = 1$. Finally, we observe that $h(\nu) \leq h(\mu) \leq \hprob(B_i) \leq \max_i \hprob(B_i)$. Since the right hand side is strictly less than $\htop(V,\sigma_V)$, which equals $\htop(X,\sigma_X)$ by Lemma~\ref{Lemma:h(X)=h(V)},  we conclude that $\hprob(X \setminus A_X) < \htop(X,\sigma|_X)$.
\end{proof}

The following proposition may be quite easily deduced from the definitions, and we omit its proof.
\begin{prop}
$A_{\Sigma}$ is Borel, $\sigma(A_{\Sigma}) = A_{\Sigma}$, and $\hprob(\Sigma_M \setminus A_{\Sigma}) < \hprob(\Sigma_M)$. 
\end{prop}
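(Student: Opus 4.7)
The plan is to rewrite $A_\Sigma$ concretely as $\Sigma_M \setminus B^*$ for an explicit Borel, shift-invariant set $B^* \subset \Sigma_M$, and then control the entropy of $B^*$ component by component via Proposition~\ref{Prop:TypeI}. First, every piece of $B$ arises as a $\phi$-preimage: by definition $B_0 = \phi^{-1}(W)$, where $W := \Sigma_M \setminus \bigcup_{j=1}^J \Sigma_M(\mathcal{C}_j)$; Proposition~\ref{Prop:TypeI} gives $B_j = \phi^{-1}(B_{j,0})$ with $B_{j,0} \subset \Sigma_M(\mathcal{C}_j)$ for each $\mathcal{C}_j$ of Type I or III; and $B_j = \varnothing$ for each $\mathcal{C}_j$ of Type II. Setting $B^* := W \cup \bigcup_{j\colon \text{Type I or III}} B_{j,0}$, we get $B = \phi^{-1}(B^*)$, and surjectivity of $\phi$ then yields $A_\Sigma = \phi(V \setminus B) = \Sigma_M \setminus B^*$.

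For the Borel and invariance claims, $W$ is open in $\Sigma_M$ (its complement is a finite union of the compact sets $\Sigma_M(\mathcal{C}_j)$), and each $B_{j,0}$ of Type I/III is $F_\sigma$ because the condition $N_w(\mathbf{a}) < \infty$ can be written as $\bigcup_N \bigcap_{n \geq N} \sigma^{-n}([w]^c)$; hence $B^*$ is Borel, and so is $A_\Sigma$. Shift-invariance follows from the preceding proposition, which implicitly established $\sigma(V \setminus B) = V \setminus B$ (via $\sigma(A_i) = A_i$ for each $i$): since $\phi$ commutes with $\sigma$, we get $\sigma(A_\Sigma) = \phi(\sigma(V \setminus B)) = \phi(V \setminus B) = A_\Sigma$.

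For the entropy estimate, let $\nu \in \mathcal{M}_e(\Sigma_M,\sigma_M)$ satisfy $\nu(\Sigma_M \setminus A_\Sigma) > 0$; by ergodicity and invariance of $B^*$, $\nu(B^*) = 1$. Every ergodic invariant measure on an SFT is supported on a single irreducible component. If that component is not among $\mathcal{C}_1,\dots,\mathcal{C}_J$ then it has zero entropy, so $h(\nu) = 0$. Otherwise $\nu$ lives on some $\Sigma_M(\mathcal{C}_j)$ of positive entropy; here $\mathcal{C}_j$ cannot be of Type II, since then $B^* \cap \Sigma_M(\mathcal{C}_j) = \varnothing$ would contradict $\nu(B^*) = 1$, so $\mathcal{C}_j$ is Type I or III and $\nu(B_{j,0}) = 1$, whence Proposition~\ref{Prop:TypeI} gives $h(\nu) \leq \hprob(B_{j,0}) < \htop(\Sigma_M(\mathcal{C}_j)) \leq \htop(\Sigma_M)$. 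Taking the supremum over finitely many such bounds, each strictly below $\htop(\Sigma_M) = \hprob(\Sigma_M)$, we conclude $\hprob(\Sigma_M \setminus A_\Sigma) < \hprob(\Sigma_M)$. The only mild subtlety is ruling out Type II components carrying mass on $B^*$; everything else is bookkeeping from the definitions.
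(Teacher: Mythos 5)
Your proposal is correct, and it supplies the argument the paper omits (the authors simply assert that the proposition ``may be quite easily deduced from the definitions'' and give no proof). Your key move, identifying $A_\Sigma$ explicitly as $\Sigma_M \setminus B^*$ via $B = \phi^{-1}(B^*)$ and the surjectivity of $\phi$, is exactly what makes the $\Sigma_M$-side argument simpler than the $A_X$-side one that the paper does spell out; it lets you work entirely on $\Sigma_M$ rather than pulling back and pushing forward through $V$. The Borel and invariance claims then follow from the fact that cylinder sets are clopen and that the pieces of $B^*$ are built from shift-invariant constructions, as you note.

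Two small points worth tightening for precision, though neither is a gap. First, when you invoke Proposition~\ref{Prop:TypeI} for the bound $\hprob(B_{j,0}) < \htop(\Sigma_M(\mathcal{C}_j))$, that inequality is established inside the proof of the proposition (it shows $\hprob(B_0) \leq \htop(Y) < \htop(\Sigma_M(\mathcal{C}))$) rather than in its stated conclusion, which is phrased in terms of $\hprob(B) < \htop(V(\mathcal{C}))$; it is worth either citing the proof or observing that $\hprob(B_0) = \hprob(B)$ and $\htop(V(\mathcal{C})) = \htop(\Sigma_M(\mathcal{C}))$ via Proposition~\ref{Prop:PhiPreservesEntropy}. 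Second, the step ``every ergodic measure is supported on a single irreducible component'' silently uses that the wandering set $\Sigma_M \setminus \bigcup_k \Sigma_M(\mathcal{C}_k)$ is null for every invariant measure (Poincar\'e recurrence); the paper records the wandering-set fact in Section~\ref{Sect:Background}, so a pointer there would make the bookkeeping airtight.
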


Now define $\psi  = \pi|_{V \setminus B} \circ \phi|_{V \setminus B}^{-1} : A_{\Sigma} \to A_X$, which will serve as our Borel entropy conjugacy map.
\begin{prop}
$\psi$ is bijective, bi-measurable, and commutes with the left shift.
\end{prop}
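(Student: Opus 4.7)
The plan is to address the three properties separately: bijectivity is immediate from the injectivity results of Propositions~\ref{Proposition:Pi} and \ref{Proposition:Phi}, shift-equivariance follows from a direct computation using shift-equivariance of $\pi$ and $\phi$, and the only substantive point is bi-measurability, for which I will invoke a standard descriptive set theoretic tool.

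For bijectivity, since $A_{\Sigma} = \phi(V \setminus B)$ and $\phi|_{V \setminus B}$ is injective by Proposition~\ref{Proposition:Phi}, the map $\phi|_{V \setminus B}^{-1}: A_{\Sigma} \to V \setminus B$ is a well-defined bijection. Similarly $\pi|_{V \setminus B}: V \setminus B \to A_X$ is a bijection onto $A_X$ by Proposition~\ref{Proposition:Pi}. Composing these two bijections yields that $\psi$ is a bijection from $A_{\Sigma}$ to $A_X$. For shift-equivariance, I would first note that $V \setminus B$ is $\sigma_V$-invariant, because each $V(\mathcal{C}_j)$ is shift-invariant and each $B_j$ is defined by a shift-invariant condition (either belonging to a complement of a union of irreducible components, or the word-occurrence condition in the definition of $B_j$ in the Type I / Type III case). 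Since $\phi$ and $\pi$ commute with the shift on $V$, a direct computation on $A_{\Sigma}$ gives $\psi \circ \sigma_M = \pi \circ \phi^{-1} \circ \sigma_M = \pi \circ \sigma_V \circ \phi^{-1} = \sigma_X \circ \pi \circ \phi^{-1} = \sigma_X \circ \psi$.

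The main obstacle is bi-measurability, which is not formal: continuous injective images of Borel sets need not be Borel in general, and their inverses need not be Borel-measurable. The appropriate tool is the Lusin--Souslin theorem (equivalently Kuratowski's theorem on Borel injections): if $f : Y_1 \to Y_2$ is an injective Borel-measurable map between standard Borel spaces, then $f(Y_1)$ is a Borel subset of $Y_2$ and $f^{-1} : f(Y_1) \to Y_1$ is Borel-measurable. To apply this, I would first observe that $V$ is closed in the Polish space $[0,1]^{\mathbb{Z}_+} \times \Sigma_M$ and that $B$ is Borel in $V$ (it is a finite union of the closed set $B_0$ and preimages $B_j = \phi^{-1}(B_{j,0})$ where each $B_{j,0}$ is a countable boolean combination of cylinder sets, hence Borel). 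Thus $V \setminus B$ is a standard Borel space, and the continuous injections $\phi|_{V \setminus B}: V \setminus B \to \Sigma_M$ and $\pi|_{V \setminus B}: V \setminus B \to X$ each satisfy the hypotheses of Lusin--Souslin. This yields Borel inverses $\phi|_{V \setminus B}^{-1}$ and $\pi|_{V \setminus B}^{-1}$ (and, as a by-product, the Borel measurability of $A_{\Sigma}$ and $A_X$, consistent with the previous two propositions). Hence $\psi = \pi|_{V \setminus B} \circ \phi|_{V \setminus B}^{-1}$ and $\psi^{-1} = \phi|_{V \setminus B} \circ \pi|_{V \setminus B}^{-1}$ are each compositions of Borel maps, and therefore Borel-measurable, completing the proof.
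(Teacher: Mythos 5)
Your proposal is correct and takes essentially the same approach as the paper: bijectivity from Propositions~\ref{Proposition:Pi} and \ref{Proposition:Phi}, shift-equivariance from the equivariance of $\pi$ and $\phi$, and bi-measurability from the fact that a continuous injection on a Borel subset of a Polish space sends Borel sets to Borel sets. The only difference is that you explicitly name the Lusin--Souslin theorem where the paper invokes the same fact without attribution.
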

\begin{proof}
Taken together, Propositions \ref{Proposition:Pi} and \ref{Proposition:Phi} yield that $\psi$ is bijective. Let $E \subset A_X$ be Borel. Then $\pi^{-1}(E) \cap (V \setminus B)$ is Borel. Also, since $\phi|_{V \setminus B}$ is an injective continuous map on the Borel set $V \setminus B$, it maps Borel sets to Borel sets. Therefore  $\psi^{-1}(E) = \phi|_{V \setminus B}( \pi^{-1}(E) \cap (V \setminus B))$ is Borel measurable. Therefore $\psi$ is Borel measurable. An analogous argument shows that $\phi^{-1}$ is also Borel measurable. Finally, since $\pi$ and $\phi$ commute with the left shift, $\psi$ also commutes with the left shift.
\end{proof}

By the previous propositions, we conclude that $\psi$ is the desired Borel entropy conjugacy between $X$ and $\Sigma_M$.
\end{PfofThmEntropyConjugacy}

\section{Sufficient conditions for $F$ to be in $\mathcal{F}$} \label{Sect:Sufficient}

Now that we have proved Theorem~\ref{Thm:EntropyConj}, we wish to highlight its utility by establishing some straightforward conditions that are sufficient for a Markov multi-map $F$ to be in the family $\mathcal{F}$. We focus on the case where there is an irreducible component $\C$ that contains all of $\mathcal{A}_0$. For single-valued functions, this condition amounts to the topological transitivity of the system.

\begin{defn}
	Suppose $\C$ is an irreducible component. We say that \emph{$F$ codes for points on $\C$} if
	\[
	\limsup_n\left\{\ell\left(I_{a_0\cdots a_n}\right)\colon\mathbf{a}\in\Sigma_M(\C)\right\}=0.
	\]
\end{defn}

\begin{lemma}\label{Lemma:CodesforPoints}
	Suppose $\C$ is an irreducible component with $\mathcal{A}_0\subset\C$. If $F$ codes for points on $\C$, then $\C$ has a coding word and an avoiding word.
\end{lemma}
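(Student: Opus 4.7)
The proof splits into the easy coding-word assertion and the substantive avoiding-word assertion. For the coding word, the hypothesis that $F$ codes for points on $\mathcal{C}$ gives $\ell(I_{a_0\cdots a_n})\to 0$ uniformly over $\mathbf{a}\in\Sigma_M(\mathcal{C})$, so $I_{\mathbf{a}}$ is a singleton for every $\mathbf{a}\in\Sigma_M(\mathcal{C})$. The implication in the definition of coding word is therefore vacuously satisfied for any $u\in\mathcal{L}(\mathcal{C})$; for instance $u=a$ with any $a\in\mathcal{A}_0\cap\mathcal{C}$ will do.

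For the avoiding word, I define $\Psi\colon\Sigma_M(\mathcal{C})\to[0,1]$ by sending $\mathbf{a}$ to the unique point of $I_{\mathbf{a}}$; uniform shrinkage makes $\Psi$ continuous and yields the shift-equivariance $\Psi(\sigma\mathbf{a})=f_{a_0}(\Psi(\mathbf{a}))$. The observation is that if some $\mathbf{a}^{*}\in\Sigma_M(\mathcal{C})$ has $\Psi(\mathbf{a}^{*})\notin P$, then for $n$ large the shrinking interval $I_{a^{*}_0\cdots a^{*}_n}$ sits in an open neighborhood of $\Psi(\mathbf{a}^{*})$ disjoint from the finite set $P$, so $a^{*}_0\cdots a^{*}_n$ is an avoiding word. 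The plan is to rule out $\Psi(\Sigma_M(\mathcal{C}))\subset P$ by contradiction.

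Assuming $\Psi$ takes values only in $P$, the strategy has three steps. First, I identify a distinguished $a\in\mathcal{A}_0\cap\mathcal{C}$ with $R_0(a)\cap P\neq\varnothing$: otherwise, the Markov inclusion $D_0(b)\subset R_0(a)$ combined with $R_0(a)\cap P=\varnothing$ forces every successor $b$ of $a$ in $\mathcal{C}$ to satisfy $D(b)=R(a)$, whence $I_{a_0\cdots a_n}=D(a_0)$ for every $\mathbf{a}\in[a]\cap\Sigma_M(\mathcal{C})$ and all $n$, contradicting codes for points. Second, since $P$ is finite and $\Psi$ is continuous into $P$, each level set $\Psi^{-1}(p)$ is clopen and, by compactness of $\Sigma_M(\mathcal{C})$, a finite union of cylinders; thus $\Psi$ factors through some finite prefix $a_0\cdots a_N$ via $\phi_N\colon\mathcal{L}_N(\mathcal{C})\to P$. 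The equivariance now reads $\phi_N(a_1\cdots a_{N+1})=f_{a_0}(\phi_N(a_0\cdots a_N))$, and fixing all arguments except the final letter shows $\phi_N$ does not depend on its last coordinate, so it descends to $\mathcal{L}_{N-1}(\mathcal{C})$. Iterating collapses $\Psi$ to a function $\phi_0\colon\mathcal{C}\to P$ with $\phi_0(a)\in D(a)\cap P$ and $\phi_0(b)=f_a(\phi_0(a))$ whenever $M(a,b)=1$. Third, applying this to the distinguished $a$ and writing $R(a)=[p_j,p_l]$ with $l-j\geq 2$, the value $v:=f_a(\phi_0(a))\in\{p_j,p_l\}$ must equal $\phi_0(b)\in D(b)\cap P$ for every successor $b\in\mathcal{C}$ of $a$. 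By $\mathcal{A}_0\subset\mathcal{C}$ and condition~(vi) of the definition of a Markov multi-map, each $k'\in\{j,\ldots,l-1\}$ supplies such a successor $b\in\mathcal{A}_0\cap\mathcal{C}$ with $D(b)=[p_{k'},p_{k'+1}]$. If $l-j\geq 3$, an interior $k'\in\{j+1,\ldots,l-2\}$ makes $\{p_{k'},p_{k'+1}\}\cap\{p_j,p_l\}=\varnothing$, forbidding $v$; if $l-j=2$, the two successors with $k'=j$ and $k'=l-1$ force $v=p_j$ and $v=p_l$ simultaneously, impossible since $p_j\neq p_l$. Either outcome contradicts the assumption $\Psi(\Sigma_M(\mathcal{C}))\subset P$.

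The main obstacle will be the middle step: carefully combining the topological finiteness (from continuity of $\Psi$ into the discrete set $P$ and compactness of $\Sigma_M(\mathcal{C})$) with the shift-equivariance to collapse $\Psi$ all the way down to a single-letter dependence. The preliminary identification of a distinguished $a$ and the closing combinatorial contradiction via condition~(vi) are comparatively routine once the reduction is in hand.
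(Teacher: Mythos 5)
Your coding-word argument matches the paper's word for word, but your avoiding-word argument is genuinely different. The paper splits into cases: if $\mathcal{A}_0\subsetneq\mathcal{C}$ (Type III), a single two-letter word $w_0w_1$ with $w_0\in\mathcal{A}_0,\ w_1\in\mathcal{A}_1\cup\mathcal{A}_2$ is already avoiding, since $I_{w_0w_1}=f_{w_0}^{-1}(D(w_1))\subset D_0(w_0)$; if $\mathcal{A}_0=\mathcal{C}$ (Type I), the paper finds $a$ with $R(a)$ meeting two partition cells, uses irreducibility to produce a return word $ab_1ua$, and then observes that the two extensions $ab_1uab_1$ and $ab_1uab_2$ yield non-overlapping intervals at least one of which misses $P$. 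You instead argue by contradiction through the coding map $\Psi$, show it is locally constant hence determined by a finite prefix, and use the shift-equivariance $\Psi(\sigma\mathbf{a})=f_{a_0}(\Psi(\mathbf{a}))$ to collapse the dependence all the way to a single letter, obtaining a $\sigma$-equivariant function $\phi_0\colon\mathcal{C}\to P$ that cannot exist. This is a nice conceptual alternative: it isolates the obstruction as the nonexistence of a continuous $P$-valued selection, at the cost of being longer and less explicit than the paper's direct construction.

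One point you should make explicit: the descent in your Step 2 relies on the equivariance $\Psi(\sigma\mathbf{a})=f_{a_0}(\Psi(\mathbf{a}))$, but $f_{a_0}$ is only a function when $a_0\in\mathcal{A}_0$ (for $a_0\in\mathcal{A}_1\cup\mathcal{A}_2$ only the constant map $f_{a_0}^{-1}$ is defined). You need, for every $a_1\in\mathcal{C}$, a predecessor $a_0\in\mathcal{A}_0\cap\mathcal{C}$, and this is not automatic for Type III components. However, it \emph{is} forced by your contradiction hypothesis: if $\mathcal{C}$ contained a symbol of $\mathcal{A}_1\cup\mathcal{A}_2$, then irreducibility would give an edge $a_0\to a_1$ with $a_0\in\mathcal{A}_0$ and $a_1\in\mathcal{A}_1\cup\mathcal{A}_2$, and any $\mathbf{a}\in[a_0a_1]$ has $\Psi(\mathbf{a})=f_{a_0}^{-1}(D(a_1))\in D_0(a_0)$, which is disjoint from $P$ -- immediately contradicting $\Psi(\Sigma_M(\mathcal{C}))\subset P$. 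So under your standing hypothesis you may as well assume $\mathcal{C}=\mathcal{A}_0$, and then every $a_0$ in sight lies in $\mathcal{A}_0$. Insert this one-sentence observation before the descent and the proof is complete. Steps 1 and 3 are sound as written.
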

\begin{proof}
	Since $F$ codes for points on $\C$, by definition, we must have that $I_\mathbf{a}$ is a singleton for all $\mathbf{a}\in\Sigma_M(\C)$. Therefore every word in $\mathcal{L}(\C)$ is a coding word. To see that $\C$ also has an avoiding word, we consider two cases.
	
	Case 1: Suppose $\mathcal{A}_0$ is a strict subset of $\C$. Then $\C$ must be a Type III component, and we showed in the proof of Proposition~\ref{Prop:TypeI} that every Type III component has an avoiding word.
	
	Case 2: Suppose $\mathcal{A}_0=\C$. We have shown that $\C$ has a coding word, so there must exist $a,b\in\mathcal{A}_0$ such that $ab\in\mathcal{L}(\mathcal{A}_0)$ and $I_{ab}$ is a strict subset of $I_a=D(a)$. This would imply that $D(b)$ is a strict subset of $R(a)$.
	
	By the definition of a Markov multi-map, $D(b)$ is an interval between adjacent elements of the partition $P$. It follows that $P$ partitions $R(a)$ into at least two intervals, so there exist distinct elements $p_i,p_j\in P$ such that $[p_i,p_{i+1}]\cup[p_j,p_{j+1}]\subseteq R(a)$. Then there must be $b_1,b_2\in\mathcal{A}_0$ such that $D(b_1)=[p_i,p_{i+1}]$ and $D(b_2)=[p_j,p_{j+1}]$.
	
	The interval $I_{ab_1}$ is a strict subset of $I_a$, so it contains at most one endpoint of $D(a)$. Since $\mathcal{A}_0$ is irreducible, there exists $u\in\mathcal{L}(\mathcal{A}_0)$ such that $ab_1ua\in\mathcal{L}(\mathcal{A}_0)$. The interval $I_{ab_1ua}$ is contained in $I_{ab_1}$, so it contains at most one endpoint of $I_a$. Then $I_{ab_1uab_1}$ and $I_{ab_1uab_2}$ are non-overlapping, so at least one of them is disjoint from $P$. Therefore $\mathcal{A}_0$ has an avoiding word.	
\end{proof}

Next we define what it means for $F$ to be uniformly expanding on $\C$, and we show that if that is the case, then $F$ codes for points on $\C$. Recall that for each $a\in\mathcal{A}$, we have a well-defined function $f_a^{-1}\colon R(a)\to D(a)$, and if $u=a_0\cdots a_n\in\mathcal{L}$, then we define $f_u^{-1}=f_{a_0}^{-1}\circ\cdots\circ f_{a_n}^{-1}$.

\begin{defn}
	Suppose $\C$ is an irreducible component. We say that $F$ is uniformly expanding on $\C$ if there exists $N\in\N$ such that
	\[
	\sup\left\{\left|(f_u^{-1})'(x)\right|\colon u\in\mathcal{L}_N(\C),x\in D(a_0)\right\}<1.
	\]
\end{defn}

\begin{lemma}\label{Lemma:UniformlyExpanding}
	Let $\C$ be an irreducible component.	If $F$ is uniformly expanding on $\C$, then $F$ codes for points on $\C$.
\end{lemma}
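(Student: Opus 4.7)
The plan is to bound $\ell(I_{a_0 \cdots a_n})$ uniformly over $\mathbf{a} \in \Sigma_M(\C)$ by a geometrically decaying sequence. Since $I_{a_0 \cdots a_n} = f_{a_0}^{-1} \circ \cdots \circ f_{a_{n-1}}^{-1}(D(a_n))$ is the image of a closed interval of length at most $1$ under the monotone composition $f_{a_0}^{-1} \circ \cdots \circ f_{a_{n-1}}^{-1}$, the mean value theorem gives
$$\ell(I_{a_0 \cdots a_n}) \leq \sup_{x \in D(a_n)} \bigl|(f_{a_0}^{-1} \circ \cdots \circ f_{a_{n-1}}^{-1})'(x)\bigr| \cdot \ell(D(a_n)).$$
Thus it suffices to show that this derivative decays geometrically in $n$, uniformly in the admissible word.

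The next step is to extract from the uniform expansion hypothesis constants $N \in \N$ and $\lambda < 1$ such that $|(f_u^{-1})'| \leq \lambda$ for every $u \in \mathcal{L}_N(\C)$ on the relevant domain. Given a word $a_0 \cdots a_{n-1} \in \mathcal{L}_{n-1}(\C)$ with $n$ large, I would write $n = kN + r$ with $0 \leq r < N$ and decompose the composition $f_{a_0}^{-1} \circ \cdots \circ f_{a_{n-1}}^{-1}$ into $k$ consecutive blocks of length $N$ (each corresponding to an admissible subword in $\mathcal{L}_N(\C)$) followed by a tail of length $r$. Applying the chain rule and the uniform expansion bound on each block yields
$$\bigl|(f_{a_0}^{-1} \circ \cdots \circ f_{a_{n-1}}^{-1})'(x)\bigr| \leq C \cdot \lambda^k,$$
where $C$ is a finite uniform bound on the derivative of compositions of fewer than $N$ consecutive $f^{-1}$ maps; such a $C$ exists by continuity of each $f_a$ together with the finiteness of the alphabet. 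Combining this with the MVT estimate gives $\sup_{\mathbf{a} \in \Sigma_M(\C)} \ell(I_{a_0 \cdots a_n}) \leq C\lambda^k \to 0$ as $n \to \infty$, which is exactly the coding-for-points condition.

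The main obstacle is careful bookkeeping: one must verify that the blockwise decomposition respects Markov admissibility so that each block is genuinely a word in $\mathcal{L}_N(\C)$, and that the tail of length $r < N$ is absorbed into the constant $C$. A minor side point is that the uniform expansion condition implicitly requires differentiability of the $f_a^{-1}$ maps, which is only guaranteed for $a \in \mathcal{A}_0$; however, if the word contains any symbol $a_j \in \mathcal{A}_1 \cup \mathcal{A}_2$, then $D(a_j)$ is a singleton and hence $I_{a_0 \cdots a_n}$ is already degenerate for $n \geq j$, so such words contribute trivially to the supremum and the estimate above suffices in all cases.
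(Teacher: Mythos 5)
Your overall strategy is the same as the paper's: use the mean value theorem and the chain rule to bound $\ell(I_u)$, block the composition into pieces of length $N$ each contracting by $\lambda < 1$, and conclude geometric decay. The paper observes directly that $\ell(I_u) < \lambda$ for $u \in \mathcal{L}_N(\C)$, iterates to get $\ell(I_u) < \lambda^k$ for $u \in \mathcal{L}_{kN}(\C)$, and lets $k \to \infty$; since the intervals $I_{a_0\cdots a_n}$ are nested in $n$, this already controls words of every length.

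The one step of yours that would fail as written is the constant $C$. You assert that a finite uniform bound on derivatives of compositions of fewer than $N$ consecutive inverse maps exists ``by continuity of each $f_a$ together with the finiteness of the alphabet.'' That does not follow: the definition of Markov multi-map only requires each $f_a$ to be a homeomorphism, and the uniform-expansion hypothesis only constrains derivatives of compositions of the full block length $N$; it gives no control on shorter products. A homeomorphism can have an inverse with unbounded derivative (e.g.\ $f_a(x)=x^3$ near $0$), and nothing in the hypotheses rules out such behavior on sub-blocks, so the claimed $C$ need not exist. Fortunately the constant is unnecessary: since $I_{a_0\cdots a_n}\subset I_{a_0\cdots a_{kN}}$ whenever $kN\le n$, one has $\ell(I_{a_0\cdots a_n})\le \ell(I_{a_0\cdots a_{kN}})<\lambda^{k}$ with $k=\lfloor n/N\rfloor\to\infty$, and no estimate on the length-$r$ remainder is needed at all. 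This is exactly what the paper's proof does implicitly, and replacing your tail bound with this inclusion repairs the argument.
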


\begin{proof}
	Let $0<\lambda<1$ such that $|(f_u^{-1})'(x)|<\lambda$ for all $u=a_0\cdots a_N\in\mathcal{L}_N(\C)$ and $x\in R(a_N)$. Then $\ell(I_u)<\lambda\ell(R(a_N))\leq\lambda$. It follows that for all $k\geq 1$ and $u\in\mathcal{L}_{kN}(\C)$, we have $\ell(I_u)<\lambda^k$. Therefore $F$ codes for points on $\C$.
\end{proof}

By combining these results, we arrive at the following sufficient condition for $F$ to be in $\mathcal{F}$.

\begin{cor}\label{Cor:UE}
Let $F$ be a properly parametrized Markov multi-map with associated SFT $\Sigma_M$. Suppose that $\htop(\Sigma_M,\sigma_M)>0$, and furthermore there is an irreducible component $\C$ with $\mathcal{A}_0\subset\C$. If $F$ is uniformly expanding on $\C$, then $F\in\mathcal{F}$, and hence $(X,\sigma_X)$ is entropy conjugate to $(\Sigma_M,\sigma_M)$.
\end{cor}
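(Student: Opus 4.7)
The plan is to verify that $F$ satisfies each defining condition of the class $\mathcal{F}$ and then invoke Theorem \ref{Thm:EntropyConj}. Proper parametrization and positive entropy of $\Sigma_M$ are given outright, so it suffices to produce a complete set of coding words (a coding word for every irreducible component of positive entropy) and a complete set of avoiding words (an avoiding word for every such component that is entirely contained in $\mathcal{A}_0$).

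For the distinguished component $\C$, the argument is a short chain of the lemmas immediately preceding the corollary. Uniform expansion on $\C$ yields, via Lemma \ref{Lemma:UniformlyExpanding}, that $F$ codes for points on $\C$; since $\mathcal{A}_0 \subset \C$ by hypothesis, Lemma \ref{Lemma:CodesforPoints} then simultaneously supplies a coding word and an avoiding word for $\C$. To handle any other irreducible component $\C' \neq \C$ of positive entropy, I would exploit that distinct irreducible components are disjoint, so $\mathcal{A}_0 \subset \C$ forces $\C' \subset \mathcal{A}_1 \cup \mathcal{A}_2$. In particular, $D(a)$ is a singleton for every $a \in \C'$, so for any $\mathbf{a} \in \Sigma_M(\C')$ the nested set $I_{\mathbf{a}}$ is contained in the singleton $D(a_0)$ and is itself a singleton; hence every word in $\mathcal{L}(\C')$ serves as a coding word for $\C'$. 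At the same time, $\C' \cap \mathcal{A}_0 = \varnothing$ means $\C'$ is not entirely contained in $\mathcal{A}_0$, so the avoiding-word condition imposes no requirement on $\C'$.

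Combining these observations shows that $F$ has complete sets of coding and avoiding words, so $F \in \mathcal{F}$, and Theorem \ref{Thm:EntropyConj} then delivers the desired Borel entropy conjugacy between $(X,\sigma_X)$ and $(\Sigma_M,\sigma_M)$. No step here presents a genuine obstacle; the only mild subtlety is the case analysis on components distinct from $\C$, which collapses immediately once one notes that such components must live inside $\mathcal{A}_1 \cup \mathcal{A}_2$ and therefore have singleton domains at every symbol.
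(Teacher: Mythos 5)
Your proposal is correct and follows the same route as the paper: Lemma \ref{Lemma:UniformlyExpanding} followed by Lemma \ref{Lemma:CodesforPoints} supplies a coding word and an avoiding word for $\C$, and disjointness of irreducible components disposes of everything else. You are actually a bit more careful than the paper's own proof on one point: the paper only remarks that no other irreducible component can be contained in $\mathcal{A}_0$ (which settles the avoiding-word requirement), whereas you also explicitly verify the coding-word requirement for any other positive-entropy component $\C'$ by observing $\C' \subset \mathcal{A}_1 \cup \mathcal{A}_2$ forces $D(a_0)$, and hence $I_{\mathbf{a}}$, to be a singleton. That extra half-step closes a gap the paper leaves implicit.
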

\begin{proof}
	By Lemma~\ref{Lemma:UniformlyExpanding} and Lemma~\ref{Lemma:CodesforPoints}, the component $\C$ has a coding word and an avoiding word. Since $\mathcal{A}_0\subset\C$, no irreducible component (except possibly $\C$) could be contained in $\mathcal{A}_0$, so $F$ has a complete set of coding words and a complete set of avoiding words. Thus $F\in\mathcal{F}$, so by Theorem~\ref{Thm:EntropyConj}, $(X,\sigma_X)$ is entropy conjugate to $(\Sigma_M,\sigma_M)$.
\end{proof}

\section{Realization of entropies} \label{Sect:Realization}

We now prove Theorem~\ref{Thm:Realization}, which we restate here.

\begin{realizationthm}
The set $\mathcal{H}(\mathcal{F})$ is equal to the set of all positive rational multiples of logarithms of Perron numbers.
\end{realizationthm}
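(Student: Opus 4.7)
The plan is to prove the two inclusions separately. Let $L$ denote the set of positive rational multiples of logarithms of Perron numbers, which by Lind's theorem \cite{Lind1984} coincides with the set of topological entropies of nearest-neighbor SFTs.

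The forward inclusion $\mathcal{H}(\mathcal{F}) \subseteq L$ is immediate from the corollary to Theorem~\ref{Thm:EntropyConj}: for any $F \in \mathcal{F}$ with trajectory space $X$ and associated SFT $\Sigma_M$, one has $\htop(X,\sigma_X) = \htop(\Sigma_M,\sigma_M) \in L$.

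For the reverse inclusion $L \subseteq \mathcal{H}(\mathcal{F})$, fix $r \in L$ and apply Lind's theorem to select an irreducible SFT $\Sigma_N$ on a finite vertex set $V$, with transition graph $G = (V, E)$ and $\htop(\Sigma_N) = r$. Irreducibility and positive entropy force every vertex $v$ to have out-degree $d_v \geq 1$, with $d_v \geq 2$ for at least one $v$. I plan to construct $F \in \mathcal{F}$ via an edge-shift encoding: enumerate $E = \{e_1,\ldots,e_{|E|}\}$ grouping edges by source vertex, partition $[0,1]$ into $|E|$ equal subintervals with $D(e_i)$ the $i$th subinterval, and for each edge $e$ with target $w$ let $R(e)$ be the block of consecutive subintervals corresponding to edges issuing from $w$ (a single interval thanks to the grouping). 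Take each $f_e\colon D(e)\to R(e)$ to be the affine increasing homeomorphism. Since distinct edges have disjoint open domains, this multi-map has the no-crossing property, so Lemma~\ref{Lemma:ProperParam} yields a properly parametrized multi-map $F$ with the same graph and therefore the same trajectory space.

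To confirm $F \in \mathcal{F}$ I invoke Corollary~\ref{Cor:UE}: all of $\mathcal{A}_0 = E$ lies in a single irreducible component of $\Sigma_M$, namely the edge shift of $G$, which has entropy $r$; each $f_e$ has slope $d_w \geq 1$ (strict at expanding vertices); and because irreducibility rules out cycles among non-expanding vertices, every path of length at least $|V|+1$ visits an expanding vertex, so uniform expansion holds at some composition length $N$. The edge-shift component then gives $\htop(\Sigma_M) \geq r$, and hence $\htop(X(F)) \geq r$ by the corollary to Theorem~\ref{Thm:EntropyConj}. The main obstacle is the matching upper bound $\htop(\Sigma_M) \leq r$: proper parametrization may introduce auxiliary $\mathcal{A}_2$ symbols at the lattice points $(p,q) \in P \times P$ of $G(F)$, and these can form irreducible sub-SFTs with transitions $(p,q) \mapsto (q,q')$. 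Each state of such a sub-SFT has out-degree at most $2$, but when $r < \log 2$ a finer argument is required. I expect this to be resolved either by a direct spectral-radius analysis exploiting the rigid lattice-point structure forced by the edge-shift geometry, or by a small modification of the construction (such as a mild perturbation of the $f_e$ or insertion of $\mathcal{A}_1$/$\mathcal{A}_2$ buffer symbols) ensuring that all auxiliary components have entropy strictly less than $r$. Granting this bound, $\htop(X(F)) = \htop(\Sigma_M) = r$, so $r \in \mathcal{H}(\mathcal{F})$.
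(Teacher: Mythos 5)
Your forward inclusion is correct and coincides with the paper's: it is an immediate consequence of the corollary to Theorem~\ref{Thm:EntropyConj} and Lind's theorem. Your reverse inclusion, however, has a genuine gap, and it is exactly the one you flag at the end: you never establish the upper bound $\htop(\Sigma_M)\le r$, and without it the realization fails. To be concrete: your edge-shift construction produces, after proper parametrization, a family of $\mathcal{A}_2$ symbols (the lattice points of $G(F)$ in $P\times P$), and these generate their own sub-SFT with transitions $(p,q)\mapsto(q,q')$. You correctly observe that each such state has out-degree at most $2$, but that only bounds the auxiliary entropy by $\log 2$. Since Lind's theorem produces irreducible SFTs of arbitrarily small positive entropy, the case $r<\log 2$ is genuinely problematic, and your proposal does not show that the $\mathcal{A}_2$ components have entropy at most $r$ for your construction. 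Saying you ``expect'' this to follow from a spectral-radius argument or a modification of the construction is where the proof actually ends; the modification is the content of the result.

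The paper handles precisely this difficulty by a different, more carefully engineered construction. Rather than an edge-shift encoding, the paper builds the multi-map from the vertex shift directly, placing one affine segment in each cell $[i,i+1/2]\times[j,j+1/2]$ with $M(i,j)=1$, but merges two adjacent segments in the first row into a single segment $a^*$ of slope $3$ spanning three range cells, and adds buffer segments mapping into a terminal cell $[n+3/2,n+2]$. The slope-$3$ segment $a^*$ serves double duty: it is simultaneously a coding word (lengths shrink by a factor of $3$ at each visit) and the source of the asymmetry needed to control the auxiliary symbols. Indeed, the paper argues that the $\mathcal{C}_2$ and $\mathcal{B}_2$ (corner) transition graphs mirror the $\mathcal{A}_0$ transition graph everywhere except at $a^*$, where each corner $c^*$ or $b^*$ has strictly fewer successors than $a^*$; this component-wise domination of adjacency matrices is what yields $\htop(\Sigma(\mathcal{A}))=\htop(\Sigma(\mathcal{C}_0))=r$. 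Your uniform-expansion route to verifying coding and avoiding words via Corollary~\ref{Cor:UE} is a perfectly reasonable alternative to the paper's direct verification, but it does not touch the entropy of the auxiliary components; you would need to re-engineer the geometry (as the paper does) so that the $\mathcal{A}_2$ dynamics are explicitly dominated by the $\mathcal{A}_0$ dynamics, not merely rely on an out-degree bound.
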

\begin{proof}
	It suffices to show that for any irreducible SFT with positive entropy, there is a Markov multi-map in $\mathcal{F}$ with the same entropy.
	
	
	Let $\Sigma_M$ be an irreducible SFT with positive entropy associated with the $n\times n$ matrix $M$. Since $\Sigma_M$ has positive entropy, there is one row of $M$ with (at least) two ones. After possibly permuting the alphabet, suppose the first row has a one in columns $k$ and $k+1$.
	
	Now we define a Markov multi-map $F$ on the interval $[1,n+2]$ in terms of its graph. (To illustrate our construction, we give a specific matrix $M$ in Example~\ref{Example:Realization}, and we show that graph of the corresponding multi-map in Figure~\ref{Figure:Realization}.) 
	
	Let 
	\[
	P=\left\{1,1+\frac{1}{2},2,2+\frac{1}{2},\ldots,n,n+\frac{1}{2},n+1,n+\frac{3}{2},n+2\right\}.
	\]
	For each $i,j\in\{1,\ldots,n\}$, we associate the rectangle $[i,i+1/2]\times[j,j+1/2]$ with the matrix entry $M(i,j)$. If $M(i,j)=1$, we include (in the graph of $F$) a straight line connecting the bottom left corner $(i,j)$ to the top right corner $(i+1/2,j+1/2)$, and if $M(i,j)=0$, we leave that rectangle empty. There is one exception to this rule however. We have assumed that $M(1,k)=M(1,k+1)=1$, and instead of including two separate graphs in those two rectangles, we include one line connecting $(1,k)$ to $(1+1/2,k+3/2)$. 
	
	In this way we would have the graph of a multi-map with domain $\bigcup_{i=1}^n[i,i+1/2]$. We need $F$ to be defined on all of $[1,n+2]$, so we next define the graph in each rectangle of the form $[i+1/2,i+1]\times[n+3/2,n+2]$, where $i\in\{1,\ldots,n\}$. In these rectangles, we include a straight line connecting the points $(i+1/2,n+3/2)$ and $(i+1,n+2)$. Then finally, in each of the rectangles $[n+1,n+3/2]\times[n+3/2,n+2]$ and $[n+3/2,n+2]\times[n+3/2,n+2]$, we include a straight line connecting the bottom left corner to the top right corner.
	
	We have described the graph of $F$, but in order to show it is a Markov multi-map in $\mathcal{F}$ we should specify the indexing set $\mathcal{A}$ and identify a coding word and an avoiding word. Let $\mathcal{C}_0$ be a labeling of all of the straight lines that correspond to ones in the matrix $M$. (Recall that the cardinality of $\mathcal{C}_0$ will be one less than the number of ones in $M$, because the ones in the $(1,k)$ and $(1,k+1)$ entries correspond to just one straight line in the graph.) Then let $\mathcal{B}_0$ be the additional straight lines whose ranges were all $[n+3/2,n+2]$, and define $\mathcal{A}_0=\mathcal{C}_0\cup\mathcal{B}_0$.
	
	Each of the straight lines we considered have a bottom left endpoint and a top right endpoint. Let $\mathcal{C}_2$ and $\mathcal{B}_2$ be the collections of these left and right endpoints, respectively, and let $\mathcal{A}_2=\mathcal{C}_2\cup\mathcal{B}_2$. Finally let $\mathcal{A}_1=\varnothing$.
	
	Then $\mathcal{C}_0$ is a Type I irreducible component whose corresponding SFT has the same entropy as $\Sigma_M$. To complete the proof, we  show that if $\Sigma(\mathcal{A})$ and $\Sigma(\C_0)$ are the SFTs associated with $\mathcal{A}$ and $\C_0$ respectively, then $\htop(\Sigma(\mathcal{A}))=\htop(\Sigma(\C_0))$. Towards this end, we show that the symbols in $\mathcal{B}_0,\mathcal{C}_2$, and $\mathcal{B}_2$ do not increase the entropy.
	
	Let $b_0\in\mathcal{B}_0$ represent the straight line in $[n+3/2,n+2]\times[n+3/2,n+2]$, then any $b\in\mathcal{B}_0$ can only be followed by $b_0$. This means $\hprob(\Sigma(\mathcal{A}_0))=\hprob(\Sigma(\mathcal{C}_0))$. Now we consider $\mathcal{C}_2$ and $\mathcal{B}_2$. Each of these individually follows nearly the same pattern as $\mathcal{A}_0$ with only one difference. Let $a^*\in\mathcal{A}_0$ correspond to the straight line in $[1,1/2]\times[k,k+3/2]$, and let $c^*\in\mathcal{C}_2$ and $b^*\in\mathcal{B}_2$ correspond to the respective endpoints of this line. The symbol $a^*$ can be followed by any symbol whose domain is $[k,k+1/2]$ or $[k+1,k+3/2]$. On the other hand $c^*$ can only be followed by points whose first coordinate is $k$, and $b^*$ can only be followed by points whose first coordinate is $k+3/2$. The SFTs corresponding to $\mathcal{C}_2$ and $\mathcal{B}_2$ are disjoint from one another and are invariant. It follows that the entropy contributed by these sets is less than or equal to the entropy from $\mathcal{C}_0$.
	
	All of this shows that $F$ is a Markov multi-map whose associated SFT has the same entropy as $(\Sigma_M,\sigma_M)$. It only remains to show that $F\in\mathcal{F}$. The only irreducible component in $\mathcal{A}_0$ with positive entropy is $\mathcal{C}_0$. We must show it has a coding word and an avoiding word. Once again let $a^*\in\mathcal{A}_0$ correspond to the straight line in $[1,1/2]\times[k,k+3/2]$. The range $R(a^*)$ is partitioned by $P$ into three non-overlapping intervals, so every occurrence of $a^*$ in a word $u$ decreases the length of $I_u$ by a factor of 3. It follows that $a^*$ is a coding word.
	
	We can also use $a^*$ to construct an avoiding word. Let $u=u_1\cdots u_m\in\mathcal{L}(\mathcal{C}_0)$ be any word such that $a^*ua^*\in\mathcal{L}(\mathcal{C}_0)$. The interval $I_{a^*u}$ is a strict subset of $[1,1+1/2]$, so it contains at most one point of $P$. There is then an element $b\in\mathcal{C}_0$ such that $I_{a^*ua^*b}$ is disjoint from $P$ and hence an avoiding word. Therefore $F\in\mathcal{F}$.
\end{proof}

\begin{example}\label{Example:Realization}
	Consider the $3\times 3$ matrix
	\[
	M=
	\left(
	\begin{array}{ccc}
	0 & 1 & 1\\
	1 & 0 & 0\\
	1 & 1 & 0
	\end{array}
	\right)_.
	\]
	Using the method outlined in the proof of Theorem~\ref{Thm:Realization}, this matrix would yield the graph pictured in Figure~\ref{Figure:Realization}. Recall  we stipulated that there must be at least two adjacent 1s in the first row of the matrix. These appear in the second and third rows of $M$. This gives us a line in the graph connecting the points $(1,2)$ and $(1.5,3.5)$.
	
	For the rest of the graph, note that if we rotate the matrix counter-clockwise ninety degrees, then the pattern of 1s in the matrix matches the pattern of lines in the lower portion of the graph.
	
	\begin{center}
		\begin{figure}
			\begin{tikzpicture}[scale=1.5]
			\draw[dotted] (1,1)node[below]{\small1} -- (5,1) node[below]{\small5} -- (5,5) -- (1,5)node[left]{\small5} -- (1,1)node[left]{\small1};
			\foreach \x in {1,1.5,2,2.5,3,3.5,4,4.5} \draw[dotted] (\x,5) -- (\x,1)node[below]{\small$\x$};
			\foreach \x in {1,1.5,2,2.5,3,3.5,4,4.5} \draw[dotted] (5,\x) -- (1,\x)node[left]{\small$\x$};
			\draw[very thick] 
			(1,2) -- (1.5,3.5)
			(2,1) -- (2.5,1.5)
			(3,1) -- (3.5,1.5)
			(3,2) -- (3.5,2.5)
			(1.5,4.5) -- (2,5)
			(2.5,4.5) -- (3,5)
			(3.5,4.5) -- (4,5)
			(4,4.5) -- (4.5,5)
			(4.5,4.5) -- (5,5)
			;
			\end{tikzpicture}
			\caption{Markov multi-map from Example~\ref{Example:Realization}}\label{Figure:Realization}
		\end{figure}
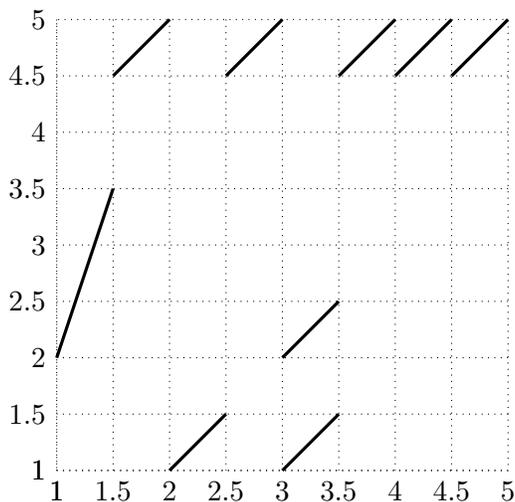
	\end{center}
\end{example}

\section{Examples}\label{Section:Examples}

We show various examples demonstrating the utility of our results. We begin by showing that Theorem~\ref{Thm:EntropyConj} generalizes the well-known result for the case that $F$ is single-valued.

\begin{example}\label{Example:UEMarkovMap}
Suppose $F$ is any uniformly expanding (single-valued) Markov map. Then Corollary~\ref{Cor:UE} recovers the well-known fact that $F$ is entropy conjugate to its combinatorial SFT.
\end{example}

Next we give an example of a Markov multi-map that is not uniformly expanding but still satisfies the hypotheses of Theorem~\ref{Thm:EntropyConj}.

\begin{example}\label{Example:NotUE}
Let $P=\{0,1/3,2/3,1\}$, $\mathcal{A}_0=\{1,2,3,4\}$, $\mathcal{A}_1=\emptyset$, and $\mathcal{A}_2=\{5,\ldots,10\}$. Let

\begin{minipage}{0.5\textwidth}
\begin{align*}
D_1&=[0,1/3]\\
D_2&=[1/3,2/3]\\
D_3&=[1/3,2/3]\\
D_4&=[2/3,1]\\
\end{align*}
\end{minipage}
\begin{minipage}{.5\textwidth}
\begin{align*}
R_1&=[1/3,2/3]\\
R_2&=[0,2/3]\\
R_3&=[2/3,1]\\
R_4&=[1/3,2/3].\\
\end{align*}
\end{minipage}

For each $a\in\{1,2,3\}$, let $G(a)$ be a straight line from the bottom left corner to the top right corner of $D(a)\times R(a)$, and let $G(4)$ be a straight line from the top left to the bottom right of $D(4)\times R(4)$. Then we define $G(5)=\{(0,1/3)\}$, $G(6)=\{(1/3,2/3)\}$, $G(7)=\{(2/3,1)\}$, $G(8)=\{(1/3,0)\}$, $G(9)=\{(2/3,1/3)\}$, $G(10)=\{(1,1/3)\}$ (all of the endpoints of $G(1),\ldots,G(4)$). This defines a Markov multi-map whose graph is pictured in Figure~\ref{Figures}.

Then $\mathcal{A}_0$ and $\mathcal{A}_2$ are both irreducible components with $\mathcal{A}_0$ (a Type I component) having greater entropy. The graph $G(3)$ has slope 2, but the rest of the graphs $G(1),G(2)$, and $G(4)$ have slope 1, so $F$ is not uniformly expanding on $\mathcal{A}_0$. However, $3\in\mathcal{L}_0$ is a coding word, because each time the symbol 3 appears in a word $u\in\mathcal{L}$, the length of the interval $I_u$ is divided in half. Also $331\in\mathcal{L}_2$ is an avoiding word, because $I_{331}=[3/6,7/12]$. Thus $F\in\mathcal{F}$, so by Theorem~\ref{Thm:EntropyConj} $(X,\sigma_X)$ is entropy conjugate to $(\Sigma_M,\sigma_M)$.
\end{example}

\begin{center}
	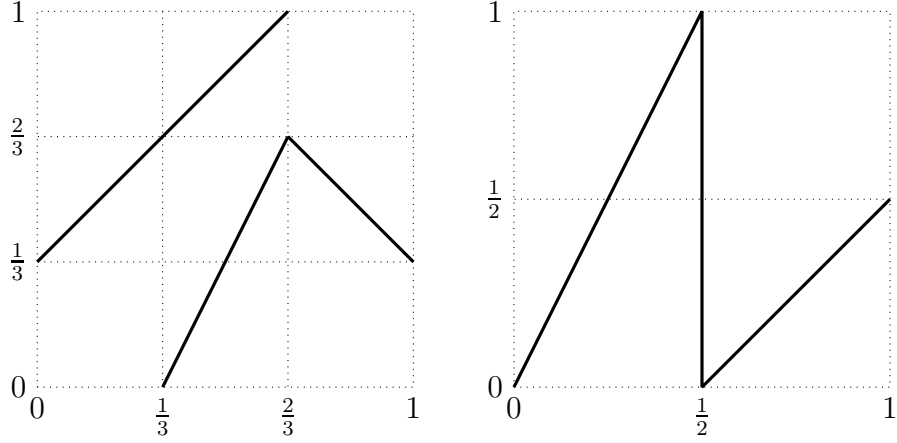
\begin{figure}
		\begin{minipage}{0.49\textwidth}
			\begin{tikzpicture}[scale=5]
			\draw[dotted] (0,0)node[below]{0} -- (1,0)node[below]{1} -- (1,1) -- (0,1)node[left]{1} -- (0,0)node[left]{0};
			\foreach \x in {1,2} \draw[dotted] ({\x/3},1) -- ({\x/3},0)node[below]{$\frac{\x}{3}$};
			\foreach \x in {1,2} \draw[dotted] (1,{\x/3}) -- (0,{\x/3})node[left]{$\frac{\x}{3}$};
			\draw[very thick] 
			(0,1/3) -- (1/3,2/3) 
			(1/3,0) -- (2/3,2/3)
			(1/3,2/3) -- (2/3,1)
			(2/3,2/3) -- (1,1/3);
			\end{tikzpicture}
		\end{minipage}
		\begin{minipage}{0.49\textwidth}
			\begin{tikzpicture}[scale=5]
			\draw[dotted] (0,0)node[below]{0} -- (1,0)node[below]{1} -- (1,1) -- (0,1)node[left]{1} -- (0,0)node[left]{0};
			\draw[dotted] (1/2,1) -- (1/2,0)node[below]{$\frac{1}{2}$};
			\draw[dotted] (1,1/2) -- (0,1/2)node[left]{$\frac{1}{2}$};
			\draw[very thick, join=bevel] (0,0) -- (1/2,1) -- (1/2,0) -- (1,1/2);
			\end{tikzpicture}
		\end{minipage}
		\caption{Markov multi-maps from Example~\ref{Example:NotUE} (left) and Example~\ref{Example:TypeIII} (right)}\label{Figures}
	\end{figure}
\end{center}

Next we show an example with a Type III irreducible component.

\begin{example}\label{Example:TypeIII}
	Let $P=\{0,1/2,1\}$, $\mathcal{A}_0=\{1,2\}$, $\mathcal{A}_1=\{3,4\}$, and $\mathcal{A}_2=\{5,6,7,8,9\}$. Let
	
	\begin{minipage}{0.5\textwidth}
		\begin{align*}
		D_1&=[0,1/2]\\
		D_2&=[1/2,1]\\
		\end{align*}
	\end{minipage}
	\begin{minipage}{.5\textwidth}
		\begin{align*}
		R_1&=[0,1]\\
		R_2&=[0,1/2]\\
		\end{align*}
	\end{minipage}
	
	Let $G(1)$ be the straight line connecting $(0,0)$ and $(1/2,1)$, and let $G(2)$ be the straight line connecting $(1/2,0)$ and $(1,1/2)$. Then let $G(3)$ and $G(4)$ be the vertical lines $\{1/2\}\times[0,1/2]$ and $\{1/2\}\times[1/2,1]$ respectively. Finally, define $G(5),\ldots,G(9)$ so that they are the endpoints of the graphs of $G(1),\ldots,G(4)$. The graph of this Markov multi-map is pictured in Figure~\ref{Figures}.
	
	In this case, two symbols from $\mathcal{A}_2$ represent the points $\{(0,0)\}$ and $\{(1/2,0)\}$. For simplicity, say these are $G(8)$ and $G(9)$. Then $\C=\{1,\ldots,7\}$ is a Type III irreducible component which means it must have a coding and an avoiding word. In this case, we can use $u=13\in\mathcal{L}_1$ as both a coding and an avoiding word, because $I_{13}=\{1/4\}$. Since there is no Type I component, we automatically have $F\in\mathcal{F}$.
\end{example}


Finally we give an example that does not satisfy our hypotheses, and for which $\htop(\Sigma_M,\sigma_M)$ is strictly greater than $\htop(X,\sigma_X)$.

\begin{example}
Define a Markov multi-map as follows.
Let $P=\{0,1\}$, $\mathcal{A}_0=\{1,2\}$, $\mathcal{A}_1=\varnothing$, and $\mathcal{A}_2=\{3,4\}$. Let $D(1)=D(2)=R(1)=R(2)=[0,1]$. Let $f_1,f_2\colon[0,1]\to[0,1]$ be defined by $f_1(x)=x^2$ and $f_2(x)=x^3$. Let $G(3)=\{(0,0)\}$, and $G(4)=\{(1,1)\}$. 

Then the only non-trivial irreducible component is $\mathcal{A}_0=\{1,2\}$, and $\Sigma_M(\mathcal{A}_0)$ is the full shift on two symbols, which has entropy $\log 2$. However, the only non-wandering points of $(X,\sigma_X)$ are the fixed points $(0,0,\ldots)$ and $(1,1,\ldots)$, so $\htop(X,\sigma_X)=0$.

Note that $I_u=[0,1]$ for all $u\in\mathcal{A}_0$, so this multi-map has neither coding words nor avoiding words. Thus $F\notin\mathcal{F}$, and Theorem~\ref{Thm:EntropyConj} does not apply.
\end{example}
%
%
%
%
%

\appendix

\section{Proof of Proposition 5.6} \label{Sect:PhiPreservesEntropy} 

Here we aim to prove Proposition \ref{Prop:PhiPreservesEntropy}. First, we recall the result of Katok \cite{Katok1980} relating the measure-theoretic entropy of an ergodic measure to Bowen balls. Consider a compact metric space $(\mathcal{X},d)$ and a continuous transformation $T : \mathcal{X} \to \mathcal{X}$. For $n \geq 1$, define the metric $d_n$ on $\mathcal{X}$ by setting
\[
d_n(x,y) = \max \bigl\{ d \bigl( T^k(x), T^k(y) \bigr) : k = 0,\dots, n-1 \bigr\}.
\]
For $\epsilon >0$, an $(n,\epsilon)$-ball is a ball of radius $\epsilon$ with respect to the metric $d_n$. 
Now let $\mu$ be in $\mathcal{M}_e(\mathcal{X},T)$. For $\alpha \in (0,1)$, $\epsilon >0$,  and $n \geq 1$, let $s(T,n,\epsilon,\alpha)$ denote the minimal cardinality of a collection of $(n,\epsilon)$-balls whose union has $\mu$-measure at least $\alpha$. Katok showed that 
\[
h(\mu) = \lim_{\epsilon \to 0^+} \limsup_n \frac{1}{n} \log s(T,n,\epsilon,\alpha).
\]

Let us now prove that the factor map $\phi : V \to \Sigma_M$ preserves the entropy of all ergodic measures.

\begin{PfofPropPhiPreservesEntropy}

As entropy cannot increase under factor maps, we have $h(\nu) \leq h(\mu)$. To complete the proof, we establish the reverse inequality.
Fix $\alpha \in (0,1)$.  For $n \geq 1$, let $r(n,\alpha)$ denote the minimal cardinality of a set of words $W \subset \mathcal{L}_n$ such that
\begin{equation} \label{Eqn:Child}
\nu\biggl( \bigcup_{w \in W} [w] \biggr) \geq \alpha.
\end{equation}
Let $\epsilon >0$. For $n \geq 1$, let $s(n,\epsilon,\alpha)$ denote the minimal cardinality of a collection $\mathcal{U}$ of $(n,\epsilon)$ balls in $V$ such that
\[
\mu \biggl( \bigcup_{U \in \mathcal{U}} U \biggr) \geq \alpha.
\]

Now let $n \geq 1$. Select a set $W = \{w_1,\dots,w_K\} \subset \mathcal{L}_n$ with cardinality $K = r(n,\alpha)$ and satisfying (\ref{Eqn:Child}). By the construction given in the proof of \cite[Theorem 4.1]{AlvinKelly2019}, for each $k$, there exists a collection $\mathcal{U}_k$ of $(n,\epsilon)$ balls in $V$ such that 
\[
\phi^{-1}([w_k]) \subset \bigcup_{U \in \mathcal{U}_k} U,
\]
and
\[
|\mathcal{U}_k| \leq (n+1) \biggl( \left\lceil \frac{1}{\epsilon} \right\rceil +1 \biggr).
\]
Let $\mathcal{U} = \cup_k \mathcal{U}_k$. Note that
\[
\mu\biggl( \bigcup_{U \in \mathcal{U}} \biggr) \geq \mu \biggl( \bigcup_{k=1}^{K} \phi^{-1}([w_k]) \biggr) = \nu \biggl( \bigcup_{k=1}^K [w_k] \biggr) \geq \alpha,
\]
and furthermore
\[
|\mathcal{U}| \leq \sum_{k=1}^K |\mathcal{U}_k| \leq (n+1)  \biggl( \left\lceil \frac{1}{\epsilon} \right\rceil +1 \biggr) r(n,\alpha).
\]
Hence
\[
s(n,\epsilon,\alpha) \leq |\mathcal{U}| \leq (n+1)  \biggl( \left\lceil \frac{1}{\epsilon} \right\rceil +1 \biggr) r(n,\alpha).
\]
Taking the limit supremum of this inequality as $n$ tends to infinity yields
\[
 \limsup_n \frac{1}{n} \log s(n,\epsilon,\alpha) 
 \leq  \limsup_n \frac{1}{n} \log r(n,\alpha) .
\]
By Katok \cite{Katok1980}, as we let $\epsilon$ tend to $0$, we obtain
\[
h(\mu) \leq  h(\nu).
\]
\end{PfofPropPhiPreservesEntropy}


\end{document}